\long\def\remove#1{}
\newcommand{\tamal}[1]{\textcolor{darkblue}{{Tamal says:#1}}}
\newcommand{\cancel}[1]{}
\DeclareMathOperator{\inv}{\sf Inv}
\DeclareMathOperator{\esol}{\sf eSol}
\DeclareMathOperator{\cl}{\sf cl}
\DeclareMathOperator{\mo}{\sf mo}
\DeclareMathOperator{\pf}{\sf pf}
\DeclareMathOperator{\pb}{\sf pb}
\newtheorem{theorem}{\sffamily Theorem}
\newtheorem{corollary}[theorem]{\sffamily Corollary}
\newtheorem{proposition}[theorem]{\sffamily Proposition}
\newtheorem{definition}[theorem]{\sffamily Definition}
\newif\ifpaper
\title{Persistence of the Conley Index in Combinatorial Dynamical Systems}
\begin{document}

\author[1]{Tamal K. Dey\thanks{dey.8@osu.edu} }

\author[2]{Marian Mrozek\thanks{marian.mrozek@uj.edu.pl}}

\author[1]{Ryan Slechta\thanks{slechta.3@osu.edu}}

\affil[1]{Department of Computer Science and Engineering,
The Ohio State University, Columbus, USA}
\affil[2]{Division of Computational Mathematics, Faculty of Mathematics and Computer Science, Jagiellonian University, Krak\'{o}w, Poland}
\date{}

\maketitle

\setcounter{page}{1}
\begin{abstract}
A combinatorial framework for dynamical systems provides an avenue for connecting classical dynamics with data-oriented, algorithmic methods. Combinatorial vector fields introduced by Forman \cite{Forman1998b,Forman1998a} and their recent generalization to multivector fields \cite{Mr2017} have provided a starting point for building such a connection. In this work, we strengthen this relationship by placing the Conley index in the persistent homology setting. Conley indices are homological features associated with so-called isolated invariant sets, so a change in the Conley index is a response to perturbation in an underlying multivector field. We show how one can use zigzag persistence to summarize changes to the Conley index, and we develop techniques to capture such changes in the presence of noise. We conclude by developing an algorithm to ``track'' features in a changing multivector field.
\end{abstract}

\section{Introduction}
\label{sec:intro}
At the end of the 19th century, scientists became aware that the very fruitful theory of differential equations cannot provide a description of the asymptotic behavior of solutions in situations when no analytic formulas for solutions are available. This observation affected Poincar\'e's study on the stability of our celestial system \cite{Poinc1890} and prompted him to use the methods of dynamical systems theory. The fundamental observation of the theory is that solutions limit in invariant sets. Examples of invariant sets include stationary solutions, periodic orbits, connecting orbits, and many more complicated sets such as chaotic invariant sets discovered in the second half of the 20th century \cite{Lo1963}. 
Today, the Conley index \cite{Co78,MiMr2002} is among the most fundamental topological descriptors that are used for analyzing invariant sets. The Conley index is defined for {\em isolated invariant sets} which are maximal invariant sets in some neighborhood. It characterizes whether isolated invariant sets are attracting, repelling or saddle-like. It is used to detect stationary points, periodic solutions and connections between them. Moreover, it provides methods to detect and characterize different chaotic invariant sets. In particular, it was used to prove that the system discovered by Lorenz \cite{Lo1963} actually contains a chaotic invariant set \cite{MiMr1995}. The technique of multivalued maps used in this proof
may be adapted to dynamical systems known only from finite samples \cite{MMRS1999}. Unfortunately, unlike the case when an analytic description 
of the dynamical system is available, the approach proposed in \cite{MMRS1999} lacks a validation method. This restricts possible applications in today's data-driven world. 
In order to use topological persistence as a validation tool, we need an analog of dynamical systems for discrete data. 
In the case of a dynamical system with continuous time, the idea comes from the fundamental work of R. Forman on discrete Morse theory \cite{Forman1998a} and combinatorial vector fields \cite{Forman1998b}. This notion of a combinatorial vector field was recently generalized to that of a combinatorial multivector field \cite{Mr2017}. Since then, the Conley index has been constructed in the setting of combinatorial multivector fields \cite{LKMW19,Mr2017}. The aim of this research is to incorporate the ideas of topological persistence into the study of the Conley index.

\begin{figure}[htbp]
\centering
\begin{tabular}{ccc}
  \includegraphics[height=35mm]{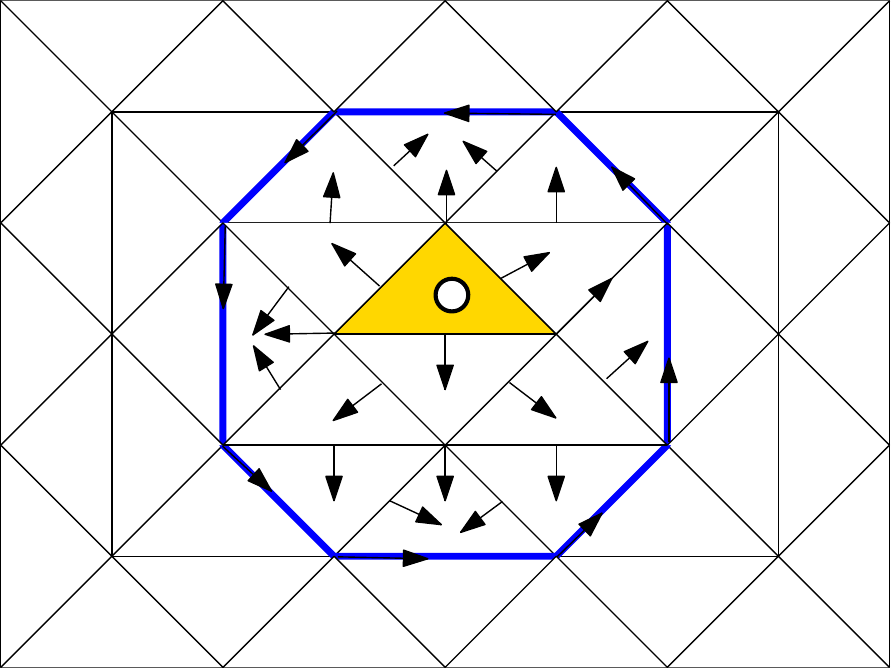}&
  \includegraphics[height=35mm]{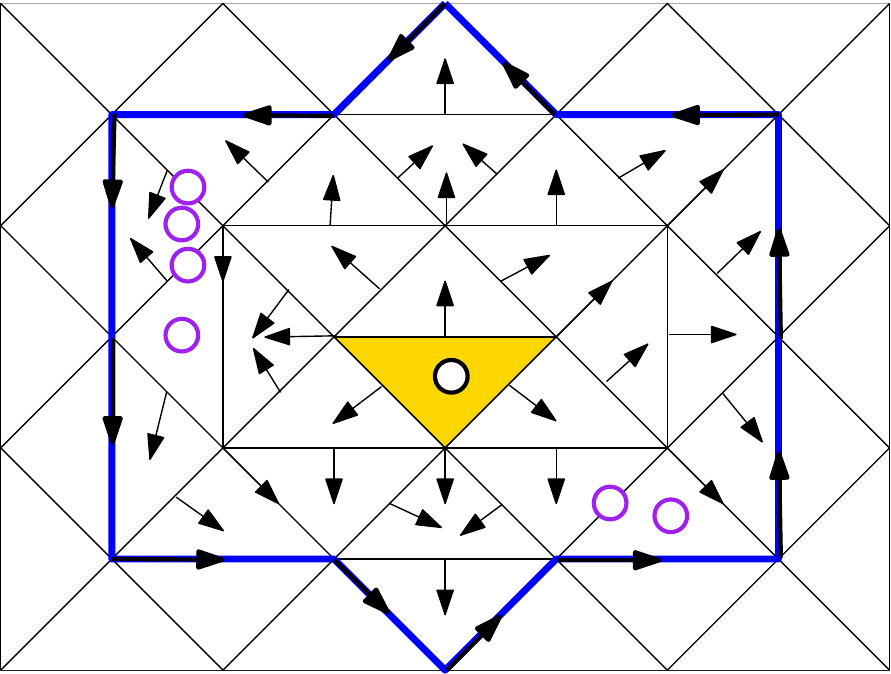}&
  \includegraphics[height=35mm]{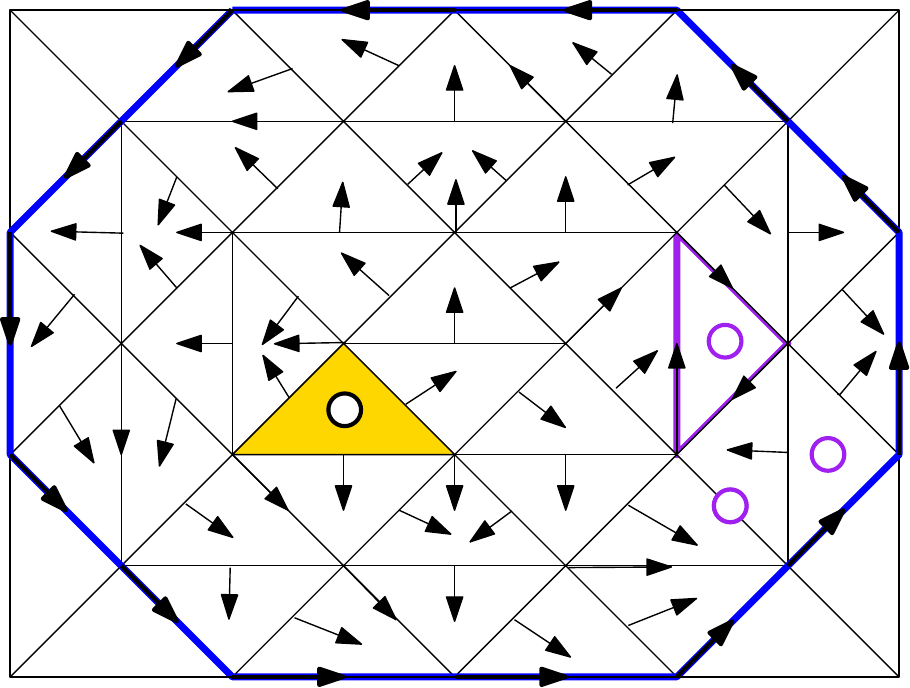}%
\end{tabular}
\caption{
Three multivector fields. In each field, there is a periodic attractor in blue. Such an attractor is an example of an invariant set. The reader will notice that all flows which enter the periodic attractor can ultimately be traced back to simplices marked with circles. These simplices are individually invariant sets, and they correspond to the notion of fixed points. In each multivector field, the gold triangle corresponds to the notion of a repelling fixed point, while the  triangles and edges with magenta circles are spurious. Notice that in the third multivector field there is a spurious periodic attractor. However, despite spurious invariant sets, in all three multivector fields the predominant feature is a repelling triangle, from which most emanating flow terminates in the periodic attractor. We aim to develop a quantitative summary of this behavior.}
\label{intro-fig}
\end{figure} 

Given a simplicial complex $K$, Forman defined a combinatorial vector field $\mathcal{V}$ as a partition of $K$ into three sets $L\sqcup U\sqcup C$ where a bijective map $\mu: L \rightarrow U$ pairs a $p$-simplex $\sigma\in L$ with a $(p+1)$-simplex $\tau=\mu(\sigma)$. This pair can be thought of as a vector originating in $\sigma$ and terminating in $\tau$. Using these vectors, Forman defined a notion of flow for discrete vector fields called a $V$-path. These paths correspond to the classical notion of integral lines in smooth vector fields. Multivector fields proposed in~\cite{Mr2017} generalize this concept by allowing a vector to have multiple simplices (dubbed \textit{multivectors}) and more complicated dynamics. 

An extension to the idea of the $V$-path from Forman's theory is called a \textit{solution} for a multivector field $\mathcal{V}$. A solution in $\mathcal{V}$ is a possibly infinite sequence of simplices $\{\sigma_i\}$ such that $\sigma_{i+1}$ is either a face of $\sigma_i$ or in the same multivector as $\sigma_i$. Solutions may be doubly infinite (or bi-infinite), right infinite, left infinite, or finite. Solutions that are not doubly infinite are \emph{partial}.  
Bi-infinite solutions correspond to invariant sets in the combinatorial setting.

In Figure \ref{intro-fig}, one can see a sequence of multivector fields, each of which contains multiple isolated invariant sets. In principle, one would like to choose an isolated invariant set from each of the multivector fields and obtain a description of how the Conley index of these sets changes. Obtaining such a description is highly nontrivial, and it is the main contribution of this paper. Given a sequence of isolated invariant sets, we use the theory of zigzag persistence \cite{zigzag} to extract such a description. In \cite{DJKKLM19}, the authors studied the persistence of the Morse decomposition of multivector fields, but this is the first time that the Conley index has been placed in a persistence framework. We also provide schemes to automatically select isolated invariant sets and to limit the effects of noise on the persistence of the Conley index.
\section{Preliminaries}
\label{sec:prelim}
Throughout this paper, we will assume that the reader has a basic understanding of both point set and algebraic topology. In particular, we assume that the reader is well-versed in homology. For more information on these topics, we encourage the reader to consult \cite{hatcher,munkres}.

\subsection{Multivectors and Combinatorial Dynamics}

In this subsection, we briefly recall the fundamentals of multivector fields as established in \cite{DJKKLM19,LKMW19,Mr2017}. Let $K$ be a finite simplicial complex with face relation $\leq$, that is, $\sigma \leq \sigma'$ if and only if $\sigma$ is a face of $\sigma'$. Equivalently, $\sigma \leq \sigma'$ if $V(\sigma) \subseteq V(\sigma')$, where $V(\sigma)$ denotes the vertex set of $\sigma$. For a simplex $\sigma\in K$, we let $\cl(\sigma):=\{\tau\in K \,|\, \tau \leq \sigma\}$ and for a set $A\subseteq K$, we let $\cl(A):= \{\tau \leq \sigma \,|\,\sigma\in A\}$. We say that $A\subseteq K$ is \emph{closed} if $\cl (A)=A$.
The reader familiar with the Alexandrov topology \cite[Section 1.1]{Ba2011} will immediately notice that this notation and terminology is aligned with the topology induced on $K$ by the relation $\leq$.
\begin{definition}[Multivector, Multivector Field]
A subset $A\subseteq K$ is called a {\em multivector} if for all $\sigma,\sigma' \in A$, $\tau \in K$ satisfying $\sigma \leq \tau \leq \sigma'$, we have that $\tau \in A$. A {\em multivector field} over $K$ is a partition of $K$ into multivectors.
\end{definition}

 Every multivector is said to be either regular or critical. To define critical multivectors, we define the \textit{mouth} of a set as $\mo(A) := \cl(A) \setminus A$. The multivector $V$ is critical if the relative homology $H_p( \cl(V), \mo(V)) \neq 0$ in some dimension $p$. Otherwise, $V$ is regular. Simplices in critical multivectors are marked with circles in Figures \ref{intro-fig}, \ref{invariant-fig}, and \ref{mouthetc-fig}. Throughout this paper, all references to homology are references to simplicial homology. Note that $H_p\left(\cl(V), \mo(V)\right)$ is thus well defined because $\mo(S) \subseteq \cl(S) \subseteq K$. Intuitively, a multivector $V$ is regular if $\cl(V)$ can be collapsed onto $\mo(V)$. In Figure~\ref{invariant-fig}, the red triangle with its two edges is a critical multivector $V$ because  $H_1(\cl (V),\mo (V))$ is nontrivial. Similarly, the gold colored triangles (denoted $\tau$) and the green edge (denoted $\sigma$) are critical because $H_2(\cl (\tau),\partial \tau)$ and $H_1(\cl (\sigma),\partial \sigma)$ are nontrivial, where we use $\partial\sigma$ to denote the boundary of a simplex $\sigma$.

A multivector field over $K$ induces a notion of dynamics. For $\sigma \in K$, we denote the multivector containing $\sigma$ as $[\sigma]$. If the multivector field $\mathcal{V}$ is not clear from context, we will use the notation $[\sigma]_\mathcal{V}$. We now use a multivector field $\mathcal{V}$ on $K$ to define a multivalued map $F_\mathcal{V} \; : \; K \multimap K$. In particular, we let $F_\mathcal{V}(\sigma) := \cl(\sigma) \cup [\sigma]$. Such a multivalued map induces a notion of flow on $K$.
In the interest of brevity, for $a,b\in \mathbb{Z}$, we set $\mathbb{Z}_{[a,b]}=[a,b]\cap\mathbb{Z}$ and define $\mathbb{Z}_{(a,b]}$,  $\mathbb{Z}_{[a,b)}$, $\mathbb{Z}_{(a,b)}$ as expected. A \textit{path} from $\sigma$ to $\sigma'$ is a map $\rho \; : \; \mathbb{Z}_{[a,b]} \to K$, where $\rho(a) = \sigma$, $\rho(b) = \sigma'$, and for all $i \in \mathbb{Z}_{(a,b]}$, we have that $\rho(i) \in F_\mathcal{V}( \rho(i - 1) )$. Similarly, a \textit{solution} to a multivector field over $K$ is a map $\rho \; : \; \mathbb{Z} \to K$ where $\rho(i) \in F_\mathcal{V}(\rho(i-1))$. 
\begin{definition}[Essential Solution]
A solution $\rho \; : \; \mathbb{Z} \to K$ is an {\em essential solution} of multivector field $\mathcal{V}$ on $K$ if for each $i \in \mathbb{Z}$ where $[\rho(i)]$ is regular, there exists an $i^{-}, i^{+} \in \mathbb{Z}$ where $i^{-} < i < i^{+}$ and $[\rho(i^{-})] \neq [\rho(i)] \neq [\rho(i^{+})]$.
\end{definition}

For a set $A \subseteq K$,  
let $\esol(A)$ denote the set of essential solutions $\rho$ such that $\rho(\mathbb{Z}) \subseteq A$. If the relevant multivector field is not clear from context, we use the notation $\esol_\mathcal{V}(A)$. We define the \textit{invariant part} of $A$ as $\inv(A) = \{ \sigma \in A \; | \; \exists \rho \in \esol(A), \rho(0) = \sigma \}$. We say that $A$ is \textit{invariant} or an \textit{invariant set} if $\inv(A) = A$. If the multivector field is not clear from context, we use the notation $\inv_\mathcal{V}(A)$.

Solutions and invariant sets are defined in accordance to their counterparts in the classical 
setting. For more information on the classical counterparts of these concepts, see \cite{BhSz1967}. As in the classical setting, we have a notion of isolation for combinatorial invariant sets.
\begin{definition}[Isolated Invariant Set, Isolating Neighborhood]
An invariant set $A \subseteq N$, $N$ closed, is {\em isolated by $N$} if all paths $\rho \; : \; \mathbb{Z}_{[a,b]} \to N$ for which $\rho(a),\rho(b) \in A$ satisfy $\rho(\mathbb{Z}_{[a,b]}) \subseteq A$. The closed set $N$ is said to be an {\em isolating neighborhood} for $S$. 
\end{definition}

\begin{figure}[htbp]
\centering
\begin{tabular}{c}
  \frame{\includegraphics[height=35mm]{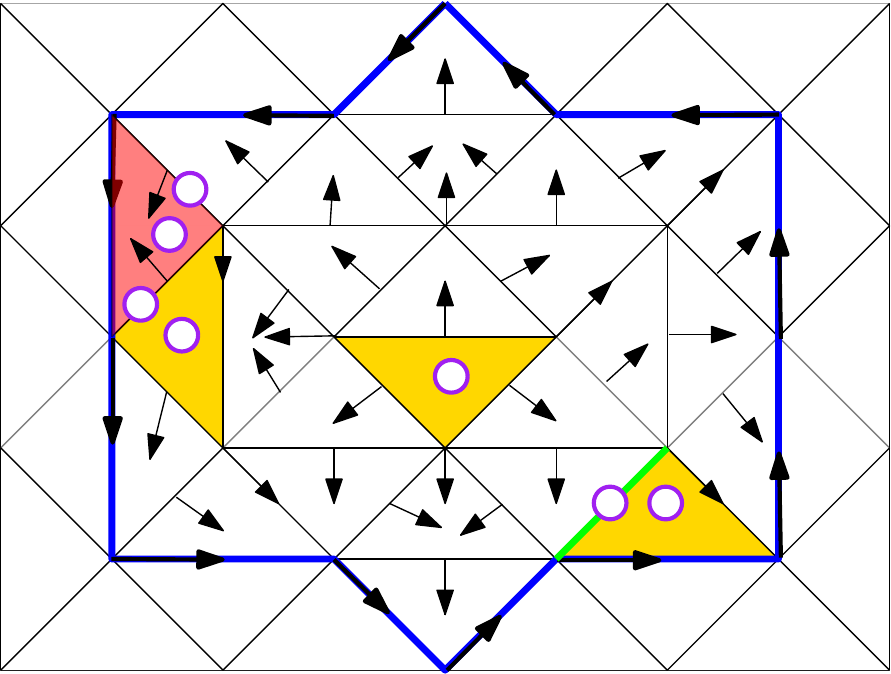}}
\end{tabular}
\caption{A multivector field with several invariant sets, isolated by the entire rectangle, $N$. Note that for each colored triangle $\sigma$, since $[\sigma]$ is critical, there is an essential solution $\rho \; : \; \mathbb{Z} \to N$ where $\rho(i) = \sigma$ for all $i$. Likewise for the green edge. Since the periodic attractor is composed of regular vectors, there is no such essential solution for any given simplex in the periodic attractor. However, by following the arrows in the periodic attractor we still get an essential solution. } 
\label{invariant-fig}
\end{figure} 

\subsection{Conley Indices}

The Conley index of an isolated invariant set is a topological invariant used to characterize features of dynamical systems~\cite{Co78,MiMr2002}. In both the classical and the combinatorial settings, the Conley index is determined by index pairs. 
\begin{definition}
Let $S$ be an isolated invariant set. The pair of closed sets $(P,E)$ subject to $E \subseteq P \subseteq K$ is an {\em index pair} for $S$ if all of the following hold:
\begin{enumerate}
\item $F_\mathcal{V}(E) \cap P \subseteq E$
\item $F_\mathcal{V}(P \setminus E) \subseteq P$ 
\item $S = \inv(P \setminus E)$
\end{enumerate}
\label{def:indpair}
\end{definition}
In addition, an index pair is said to be a \textit{saturated index pair} if $S = P \setminus E$. In Figure~\ref{mouthetc-fig}, the gold, critical triangle $\sigma$ is an isolated invariant set. The reader can easily verify that $\left(\cl(\sigma), \cl(\sigma)\setminus\{\sigma\}\right)$ is an index pair for $\sigma$.  In fact, this technique is a canonical way of picking an index pair for an isolated invariant set. This is formalized in the following proposition. 
\begin{proposition}{\cite[Proposition 4.3]{LKMW19}}
Let $S$ be an isolated invariant set. Then $(\cl(S), \mo(S))$ is a saturated index pair for $S$.
\label{prop:clomo}
\end{proposition}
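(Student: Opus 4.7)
The plan is to check, one by one, that $\cl(S)$ and $\mo(S)$ are closed, that the saturation identity $S = \cl(S)\setminus\mo(S)$ holds, and that the three axioms of Definition~\ref{def:indpair} are satisfied. The saturation identity is immediate from $\mo(S) = \cl(S)\setminus S$ together with $S\subseteq \cl(S)$, and it reduces condition~3 to $S = \inv(S)$, which is the assumption that $S$ is invariant. The set $\cl(S)$ is closed by definition.

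Throughout, I would fix a closed isolating neighborhood $N$ of $S$; since $N$ is closed and $S\subseteq N$, we have $\cl(S)\subseteq N$. The common device in the remaining steps is to construct, from any hypothetical offending simplex, a short path in $N$ whose endpoints lie in $S$, and then apply the isolation property to force the intermediate simplex into $S$, contradicting the hypothesis. To see $\mo(S)$ is closed, suppose $\tau\in\mo(S)$ and $\tau'\leq\tau$; then $\tau'\in\cl(S)$, and if we had $\tau'\in S$, pick $\sigma\in S$ with $\tau\leq\sigma$ (which exists since $\tau\in\cl(S)$) and run the path $\sigma\to\tau\to\tau'$. It lies in $\cl(S)\subseteq N$ with endpoints in $S$, so isolation forces $\tau\in S$, contradicting $\tau\in\mo(S)$.

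For condition~1, pick $\tau\in\mo(S)$ and $\sigma\in F_{\mathcal V}(\tau)\cap\cl(S)$. Recalling $F_{\mathcal V}(\tau)=\cl(\tau)\cup[\tau]$, either $\sigma\leq\tau$, in which case $\sigma\in\mo(S)$ by the closedness of $\mo(S)$ just established; or $\sigma\in[\tau]$, whence $\tau\in[\sigma]$ and we have the path $\sigma\to\tau\to\sigma$ lying in $\cl(S)\subseteq N$. Were $\sigma\in S$, isolation would then put $\tau\in S$, contradicting $\tau\in\mo(S)$; hence $\sigma\in\cl(S)\setminus S=\mo(S)$. For condition~2, take $\sigma\in S$ and $\tau\in F_{\mathcal V}(\sigma)=\cl(\sigma)\cup[\sigma]$; the subcase $\tau\leq\sigma$ is trivial, while the subcase $\tau\in[\sigma]$ requires showing that $[\sigma]\subseteq\cl(S)$.

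The main obstacle is precisely this last step, which is equivalent to $\mathcal{V}$-compatibility of invariant sets (the property that an invariant set is a union of full multivectors). This is not formally stated in the excerpt above, so I would either invoke the corresponding basic fact from the multivector-field literature (e.g.\ the analogue in \cite{LKMW19,Mr2017}) or prove it inline by a short auxiliary argument: given $\sigma\in S$ with essential solution $\rho\in\esol(S)$ through it, use the paths $\sigma\to\eta\to\sigma$ available for every $\eta\in[\sigma]$ (together with a $\mathcal V$-compatible enlargement of the isolating neighborhood, if needed) to conclude $\eta\in S$, and hence $[\sigma]\subseteq S\subseteq \cl(S)$. Everything else in the proof is a routine case split on $F_{\mathcal V}(\tau)=\cl(\tau)\cup[\tau]$ combined with the isolation property.
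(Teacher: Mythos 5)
The paper does not prove this proposition itself; it imports it verbatim as \cite[Proposition~4.3]{LKMW19}, so there is no internal proof to compare against. Your verification of closedness of $\mo(S)$, of axiom~1, and of axiom~3 via the saturation identity $S=\cl(S)\setminus\mo(S)$ is correct and exactly the sort of routine case analysis on $F_{\mathcal V}=\cl(\cdot)\cup[\cdot]$ plus isolation one would expect. You also correctly locate the one nontrivial step: axiom~2, $F_{\mathcal V}(S)\subseteq\cl(S)$, which reduces to $[\sigma]\subseteq\cl(S)$ for $\sigma\in S$, i.e.\ to $\mathcal V$-compatibility of $S$.

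The gap is that this $\mathcal V$-compatibility genuinely does \emph{not} follow from the definitions as stated in this paper, and your proposed inline fix does not repair it. Concretely, take $K$ the full $2$-simplex $abc$ and $\mathcal V$ with the critical multivector $V=\{ab,ac,abc\}$ together with critical singletons for the remaining faces. Then $S=\{ab\}$ is invariant (the constant solution is essential since $[ab]$ is critical), and it is isolated by $N=\cl(S)=\{a,b,ab\}$; yet $F_{\mathcal V}(ab)\ni abc\notin\cl(S)$, so $(\cl(S),\mo(S))$ fails axiom~2. The ``$\mathcal V$-compatible enlargement of the isolating neighborhood'' you invoke to force $\eta\in N$ cannot save the argument: here enlarging $N$ to contain $[ab]$ destroys isolation (the path $ab\to abc\to ab$ has endpoints in $S$ but leaves $S$). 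So the fact you need is not a lemma one can prove from this paper's stated Definition of isolated invariant set; it must be, and in \cite{LKMW19} is, built into the definition (invariant sets are required to be $\mathcal V$-compatible, and isolating neighborhoods are required to satisfy $F_{\mathcal V}(S)\subseteq N$). Citing that hypothesis, as you first suggest, is the right move; the inline proof sketch should be dropped.
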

However, there are several other natural ways to find index pairs. Figure~\ref{mouthetc-fig} shows another index pair for the same gold triangle $\sigma$. By letting $P := \cl(\sigma) \cup S_P$ and $E := \mo(\sigma) \cup S_E$, where $S_P$ and $S_E$ are the set of simplices reachable from paths originating in $\cl(S)$, $\mo(S)$ respectively, we obtain a much larger index pair. In Figure~\ref{mouthetc-fig}, $P$ is the set of all colored simplices, while $E$ is the set of all colored simplices which are not gold. 

In principle, it is important that the Conley index be independent of the choice of index pair. Fortunately, it is also known that the relative homology given by an index pair for an isolated invariant set $S$ is independent of the choice of index pair. 
\begin{theorem}{\cite[Theorem 4.15]{LKMW19}}
Let $(P_1,E_1)$ and $(P_2,E_2)$ be index pairs for the isolated invariant set $S$. Then $H_p(P_1,E_1) \cong H_p(P_2,E_2)$ for all $p$. 
\label{thm:ipiso}
\end{theorem}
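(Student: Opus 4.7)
My plan is to show that any index pair $(P,E)$ for $S$ has the same relative homology as the canonical saturated pair $(\cl(S),\mo(S))$ from Proposition~\ref{prop:clomo}; applying this to both $(P_1,E_1)$ and $(P_2,E_2)$ then gives the theorem by transitivity.

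The strategy is induction on $n := |(P\setminus E)\setminus S|$, the number of ``transient'' simplices in $P\setminus E$ that fail to lie in the invariant part. In the base case $n=0$, we have $P\setminus E = S$, so $S\cap E=\emptyset$ forces $\mo(S)\subseteq \cl(S)\cap P \subseteq P\setminus S = E$; hence $P = \cl(S)\cup E$ with $\cl(S)\cap E = \mo(S)$, and excising $P\setminus\cl(S)\subseteq E$ from $(P,E)$ yields $H_p(P,E)\cong H_p(\cl(S),\mo(S))$ for all $p$.

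For the inductive step, I would produce a multivector $[\sigma]$ with $\sigma\in(P\setminus E)\setminus S$ that is extremal under the combinatorial flow $F_\mathcal{V}$ restricted to $P\setminus E$ -- meaning either no essential solution through $[\sigma]$ extends forward in $P\setminus E$, or none extends backward. Such an extremal multivector must be regular, since a critical multivector contained entirely in $P\setminus E$ would support a constant essential solution and hence lie in $\inv(P\setminus E)=S$, contradicting $\sigma\notin S$. Regularity yields $H_p(\cl([\sigma]),\mo([\sigma]))=0$ for all $p$, so the pair can be pruned by either removing $[\sigma]$ from $P$ or adjoining $[\sigma]$ to $E$, the direction chosen to match the direction of extremality. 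The long exact sequence of the relevant triple, together with the vanishing above, shows the relative homology is preserved, and the inductive hypothesis applies to the strictly smaller pruned pair, completing the induction.

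The main obstacle is the dynamical bookkeeping: at each stage one must exhibit an extremal multivector, verify that the pruned pair is still an index pair for the same $S$ (extremality in a given time direction is precisely what prevents the pruning from creating a new invariant simplex or destroying an essential solution witnessing a simplex of $S$), and ensure the process terminates in the base case. Existence of an extremal multivector can be arranged via a well-founded partial order on $(P\setminus E)\setminus S$ built from reachability under $F_\mathcal{V}$, using the fact that no bi-infinite essential solution in $P\setminus E$ can lie entirely outside $S$. Once the dynamical step is in place, the homological content of each reduction is a routine application of excision and the long exact sequence of a triple.
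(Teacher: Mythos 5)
The paper does not actually prove this theorem; it cites it directly from \cite{LKMW19}, so there is no in-paper proof to compare you against. Your strategy --- induct on the number of ``transient'' simplices in $(P\setminus E)\setminus S$ and peel off regular multivectors one at a time, reducing to the saturated pair $(\cl(S),\mo(S))$ --- is a natural discrete-Morse-flavored route and can be made to work. Your base case is correct: when $P\setminus E=S$, disjointness of $S$ and $E$ together with closedness of $P$ gives $\mo(S)\subseteq E$ and $P=\cl(S)\cup E$ with $\cl(S)\cap E=\mo(S)$, so excision finishes. Your homological step is also fine: for a regular multivector $V$, excision gives $H_p(E\cup V,E)\cong H_p(\cl(V),\mo(V))=0$, and the long exact sequence of the triple $(P,E\cup V,E)$ (or $(P,P\setminus V,E)$) carries the isomorphism.

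The gap is in the dynamical step, which you flag as the ``main obstacle'' but leave genuinely underspecified in a way that matters. Being extremal merely among the transient multivectors $W:=(P\setminus E)\setminus S$ is not enough. A sink $[\sigma]$ of the reachability DAG on $W$ can still satisfy $\mo([\sigma])\cap S\neq\emptyset$, in which case $E\cup[\sigma]$ is not closed and the adjoin-to-$E$ move is illegal; dually, a source of the $W$-DAG can have $[\sigma]\cap\mo(S)\neq\emptyset$, so $P\setminus[\sigma]$ is not closed and the delete-from-$P$ move is illegal. What you actually need is a transient multivector that is a source or sink with respect to the flow on \emph{all of} $P\setminus E$, including $S$. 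Existence of such a multivector requires more than your stated observation that no bi-infinite essential solution in $P\setminus E$ avoids $S$ (which only gives acyclicity inside $W$). You also need: no transient multivector admits a path in $P\setminus E$ both from $S$ and to $S$ --- otherwise splicing those two finite segments into essential solutions through the $S$-endpoints (using that $S=\inv(P\setminus E)$, and that a regular multivector meeting $S$ lies wholly in $S$) produces an essential solution in $P\setminus E$ through the supposedly transient simplex, a contradiction. Combined with finiteness and $W$-acyclicity, this forces some element of $W$ to be globally extremal, and then the pruning direction is determined (source $\Rightarrow$ delete from $P$, sink $\Rightarrow$ adjoin to $E$), preserving the four index-pair conditions. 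Until you make the choice of extremal multivector and the matching of pruning direction precise in this way, the induction does not quite close.
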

The \textit{Conley Index}  of an isolated invariant set $S$ in dimension $p$ is then given by the relative
homology group $H_p(P,E)$ for any index pair of $S$ denoted $(P,E)$. 

\begin{figure}[htbp]
\centering
\begin{tabular}{c}
  \frame{\includegraphics[height=35mm]{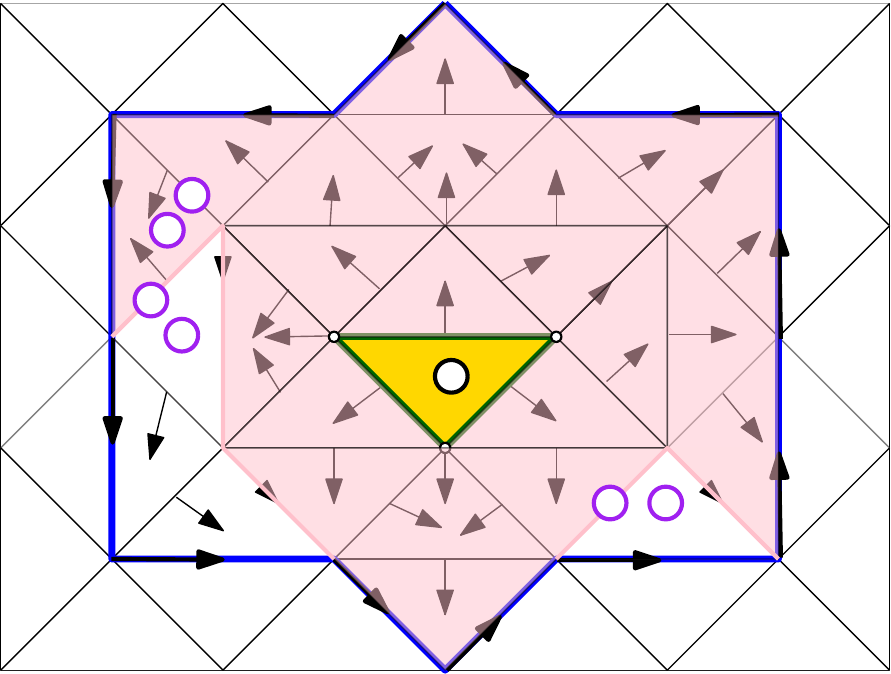}}
\end{tabular}
\caption{Two index pairs for the gold triangle, denoted $\sigma$. The first is given by $(\cl(\sigma), \mo(\sigma))$ where $\mo(\sigma)$ is in green and $\cl(\sigma) \setminus \mo(\sigma)$ is exactly the gold triangle. The second index pair is $(\pf(\cl(\sigma)), \pf(\mo(\sigma)))$, where $\pf(\mo(\sigma))$ consists of those simplices which are colored pink, green, and blue, while $\pf(\cl(\sigma))$ consists of all colored simplices. Note that the second index pair is also an index pair in $N$, where $N$ is taken to be the entire rectangle. }
\label{mouthetc-fig}
\end{figure} 
 \section{Conley Index Persistence}
\label{sec:persist}

We move to establishing the foundations for persistence of the Conley Index. Given a sequence of multivector fields $\mathcal{V}_1,\mathcal{V}_2,\ldots,\mathcal{V}_n$ on a simplicial complex $K$, one may want to quantify the changing behavior of the vector fields. One such approach is to compute a sequence of isolated invariant sets $S_1, S_2, \ldots, S_n$ under each multivector field, and then to compute an index pair for each isolated invariant set. By Proposition \ref{prop:clomo}, a canonical way to do this is to take the closure and mouth of each isolated invariant set to obtain a sequence of index pairs $(\cl(S_1), \mo(S_1)), (\cl(S_2), \mo(S_2)), \ldots, (\cl(S_n), \mo(S_n))$. A first idea is to take the element-wise intersection of consecutive index pairs, which results in the zigzag filtration:
\[
(\cl(S_1), \mo(S_1)) \supseteq (\cl(S_1) \cap \cl(S_2), \mo(S_1) \cap \mo(S_2)) \subseteq (\cl(S_2), \mo(S_2)) \cdots  (\cl(S_n), \mo(S_n)) 
\]
Taking the relative homology groups of the pairs in the zigzag sequence, we obtain a zigzag persistence module. We can extract a barcode corresponding to a decomposition of this module: 
\[\adjustbox{scale = .75}{
    \begin{tikzcd}[column sep = tiny]
        \centering
        H_p(\cl(S_1),\mo(S_1))
        & H_p(\cl(S_1)\cap \cl(S_2), \mo(S_1) \cap \mo(S_2)) \arrow[l] \arrow[r]
        & H_p(\cl(S_2),\mo(S_2))
        & \cdots \arrow[l] \arrow[r]
        & H_p(\cl(S_n),\mo(S_n)).
    \end{tikzcd}
}\]
However, the chance that this approach works in practice is low. In general, two isolated invariant sets $S_1, S_2$ need not overlap, and hence their corresponding index pairs need not intersect. For example, if one were to take the blue periodic solutions in the multivector fields in Figure~\ref{intro-fig} to be $S_1$, $S_2$, $S_3$, by using Proposition \ref{prop:clomo} one gets the index pairs $(S_1,\emptyset)$, $(S_2, \emptyset)$, and $(S_3, \emptyset)$ (since $\cl(S_i) = S_i)$. Note that in such a case, the intermediate pairs are $(S_1 \cap S_2, \emptyset)$ and $(S_2 \cap S_3, \emptyset)$. But $S_1 \cap S_2$ and $S_2 \cap S_3$ intersect only at vertices, so none of the $1$-cycles persist beyond their multivector field. This is problematic in computing the persistence, because intuitively there should be an $H_1$ generator that persists through all three multivector fields. To increase the likelihood that two index pairs intersect, we consider a special type of index pair called an \textit{index pair in $N$}. 
\begin{definition}
    Let $S$ be an invariant set isolated by $N$ under $\mathcal{V}$. The pair of closed sets $(P,E)$ satisfying $E \subseteq P \subseteq N$ is an {\em index pair for $S$ in $N$} if all of the following conditions are met: 
    \begin{enumerate}
        \item $F_{\mathcal{V}}(P) \cap N \subseteq P$ 
        \item $F_{\mathcal{V}}(E) \cap N \subseteq E$
        \item $F_{\mathcal{V}}(P \setminus E) \subseteq N$, and 
        \item $S = \inv(P \setminus E)$. 
    \end{enumerate}
\label{def:modindpair}
\end{definition}
As is expected, such index pairs in $N$ are index pairs. 
\begin{theorem}
Let $(P,E)$ be an index pair in $N$ for $S$. The pair $(P,E)$ is an index pair for $S$ in the sense of Definition \ref{def:indpair}. 
\end{theorem}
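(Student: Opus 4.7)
The plan is to verify each of the three conditions of Definition \ref{def:indpair} directly from the four conditions of Definition \ref{def:modindpair}, using only that $P \subseteq N$ and the set-theoretic manipulations that this inclusion affords. Since $E \subseteq P \subseteq N$ and both $P$ and $E$ are closed, the underlying hypothesis $E \subseteq P \subseteq K$ is immediate.

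First, I would dispatch condition (3) of Definition \ref{def:indpair}, namely $S = \inv(P \setminus E)$, since this is literally condition (4) of Definition \ref{def:modindpair} and requires nothing more to be said.

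Next, for condition (1) of Definition \ref{def:indpair}, I want to show $F_\mathcal{V}(E) \cap P \subseteq E$. The key observation is that $P \subseteq N$, so intersecting with $P$ is at least as restrictive as intersecting with $N$: $F_\mathcal{V}(E) \cap P \subseteq F_\mathcal{V}(E) \cap N$. Applying condition (2) of Definition \ref{def:modindpair}, the right-hand side lies in $E$, which finishes this step.

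For condition (2) of Definition \ref{def:indpair}, I need $F_\mathcal{V}(P \setminus E) \subseteq P$. I would chain together conditions (1) and (3) of Definition \ref{def:modindpair}: by condition (3) the image $F_\mathcal{V}(P \setminus E)$ is contained in $N$, and since $P \setminus E \subseteq P$, monotonicity of $F_\mathcal{V}$ yields $F_\mathcal{V}(P \setminus E) \subseteq F_\mathcal{V}(P)$. Combining, $F_\mathcal{V}(P \setminus E) \subseteq F_\mathcal{V}(P) \cap N$, and condition (1) of Definition \ref{def:modindpair} says this lies in $P$.

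There is no real obstacle here — the proof is a short set-chase — so I would expect the entire argument to be a handful of lines. The only subtlety worth flagging is the use of monotonicity of the multivalued map $F_\mathcal{V}$, but this is immediate from the definition $F_\mathcal{V}(\sigma) = \cl(\sigma) \cup [\sigma]$ taken pointwise.
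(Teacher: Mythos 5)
Your proof is correct and follows essentially the same route as the paper's: both arguments establish condition (3) of Definition \ref{def:indpair} directly from condition (4) of Definition \ref{def:modindpair}, derive condition (1) by relaxing the intersection with $P$ to an intersection with $N$ and invoking condition (2), and derive condition (2) by combining conditions (3) and (1) of Definition \ref{def:modindpair}. The only cosmetic difference is that the paper phrases the third step elementwise (fixing $\sigma \in P\setminus E$) while you phrase it setwise via monotonicity of $F_\mathcal{V}$; this is the same argument.
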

\begin{proof}
Note that by condition three of Definition \ref{def:modindpair}, if $\sigma \in P \setminus E$, then $F_{\mathcal{V}}(\sigma) \subseteq N$. Condition one of Definition \ref{def:modindpair} implies that $F_{\mathcal{V}}(\sigma) \cap N = F_{\mathcal{V}}(\sigma) \subseteq P$, which is condition two of Definition \ref{def:indpair}. Likewise, by condition two of Definition \ref{def:modindpair}, if $\sigma \in E$, then $F_{\mathcal{V}}(\sigma) \cap N \subseteq E$. Note that $P \subseteq N$, so it follows that $F_{\mathcal{V}}(\sigma) \cap P \subseteq F_{\mathcal{V}}(\sigma) \cap N \subseteq E$, which is condition one of Definition \ref{def:indpair}. Finally, condition four of Definition \ref{def:modindpair} directly implies condition three of Definition \ref{def:indpair}. 
\end{proof}
An additional advantage to considering index pairs in $N$ is that the intersection of index pairs in $N$ is an index pair in $N$. In general, $(\cl(S_1) \cap \cl(S_2), \mo(S_1) \cap \mo(S_2))$ is not an index pair. However, for index pairs in $N$, we get the next two results which involve the notion of a new multivector field obtained by intersection. Given two multivector fields $\mathcal{V}_1$, $\mathcal{V}_2$, we define $\mathcal{V}_1 \overline{\cap} \mathcal{V}_2 := \{ V_1 \cap V_2 \; | \; V_1 \in \mathcal{V}_1, \; V_2 \in \mathcal{V}_2 \}$.
\begin{theorem}
Let $(P_1,E_1),(P_2,E_2)$ be index pairs in $N$ for $S_1,S_2$ under $\mathcal{V}_1,\mathcal{V}_2$. The set $\inv( (P_1 \cap P_2 ) \setminus (E_1 \cap E_2) )$ is isolated by $N$ under $\mathcal{V}_1 \overline{\cap} \mathcal{V}_2$. 
\end{theorem}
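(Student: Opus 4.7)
Write $Q := (P_1\cap P_2)\setminus(E_1\cap E_2)$ and $S := \inv_{\mathcal{V}_1 \overline{\cap} \mathcal{V}_2}(Q)$. Since $Q\subseteq P_1\cap P_2\subseteq N$ we have $S\subseteq N$, so it remains to verify the isolating-neighborhood condition: every path $\rho\colon\mathbb{Z}_{[a,b]}\to N$ under $\mathcal{V}_1\overline{\cap}\mathcal{V}_2$ with $\rho(a),\rho(b)\in S$ has $\rho(\mathbb{Z}_{[a,b]})\subseteq S$. The first key observation, which I would state as a small lemma, is that $[\sigma]_{\mathcal{V}_1\overline{\cap}\mathcal{V}_2}\subseteq[\sigma]_{\mathcal{V}_i}$, so $F_{\mathcal{V}_1\overline{\cap}\mathcal{V}_2}(\sigma)\subseteq F_{\mathcal{V}_i}(\sigma)$; hence any path under the intersection field is simultaneously a path under each $\mathcal{V}_i$.

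The main structural step is to show $\rho(\mathbb{Z}_{[a,b]})\subseteq Q$. Since $\rho(a)\in P_1$ and $\rho$ stays in $N$, repeated application of condition~1 of Definition~\ref{def:modindpair} for $(P_1,E_1)$ under $\mathcal{V}_1$ gives $\rho(\mathbb{Z}_{[a,b]})\subseteq P_1$; the same argument with $\mathcal{V}_2$ yields $\rho(\mathbb{Z}_{[a,b]})\subseteq P_2$. Now suppose, for contradiction, that $\rho(i)\in E_1\cap E_2$ for some $i\in\mathbb{Z}_{[a,b]}$. Condition~2 applied to both index pairs, combined with $\rho(j)\in N$, inductively propagates forward to give $\rho(j)\in E_1\cap E_2$ for all $j\ge i$, so in particular $\rho(b)\in E_1\cap E_2$, contradicting $\rho(b)\in S\subseteq Q$. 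Hence $\rho$ avoids $E_1\cap E_2$ and the claim follows.

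Finally I would upgrade "stays in $Q$" to "stays in $S$" by splicing in essential solutions witnessing $\rho(a),\rho(b)\in S$. Choose $\rho_a,\rho_b\in\esol_{\mathcal{V}_1\overline{\cap}\mathcal{V}_2}(Q)$ with $\rho_a(0)=\rho(a)$, $\rho_b(0)=\rho(b)$, and for each $i\in\mathbb{Z}_{[a,b]}$ define
\[
\tilde\rho(j)\;=\;\begin{cases}\rho_a(j-a), & j\le a,\\ \rho(j), & a\le j\le b,\\ \rho_b(j-b), & j\ge b.\end{cases}
\]
The transitions at $j=a$ and $j=b$ match by construction, so $\tilde\rho$ is a solution in $Q$, and $\tilde\rho(i)=\rho(i)$. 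The only point requiring care is essentiality: at any index $j$ where $[\tilde\rho(j)]$ is regular, I need $j^{\pm}$ with different multivector on either side. If such indices already exist within the original domain of $\rho_a$ or $\rho_b$ or within the middle segment, we are done; the remaining case, in which the entire middle segment lies in a single regular multivector $V$, is handled by invoking essentiality of $\rho_a$ (resp.\ $\rho_b$) at index $0$, which produces an index in the tail of $\rho_a$ (resp.\ $\rho_b$) whose multivector differs from $V$. This step — checking essentiality of the spliced solution — is the only real subtlety; once it is in place, $\rho(i)\in S$ for every $i$, completing the proof.
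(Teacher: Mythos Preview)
Your proposal is correct and follows essentially the same strategy as the paper: use $F_{\mathcal{V}_1\overline{\cap}\mathcal{V}_2}\subseteq F_{\mathcal{V}_i}$ together with conditions~1 and~2 of Definition~\ref{def:modindpair} to trap the path in $P_1\cap P_2$ and out of $E_1\cap E_2$, then splice essential solutions at the endpoints to conclude membership in the invariant part. The only differences are cosmetic: you establish $\rho(\mathbb{Z}_{[a,b]})\subseteq Q$ directly before splicing, whereas the paper first splices and then derives the $E_1\cap E_2$ contradiction, and you spend more effort than the paper (which simply asserts ``it is easy to see'') on checking that the spliced map is essential.
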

\begin{proof}
To contradict, we assume that there exists a path $\rho \; : \; \mathbb{Z}_{[a,b]} \to N$ under $\mathcal{V}_1\overline\cap\mathcal{V}_2$ where $\rho(a),\rho(b) \in \inv( (P_1 \cap P_2 ) \setminus (E_1 \cap E_2) ) $ and there exists some $i \in (a,b) \cap \mathbb{Z}$ where $\rho(i) \not\in \inv( (P_1 \cap P_2 ) \setminus (E_1 \cap E_2) )$. Note that by the the definition of an index pair, $F_\mathcal{V}(P) \cap N \subseteq P$. Hence, it follows by an easy induction argument that since $F_{\mathcal{V}_1 \overline{\cap} \mathcal{V}_2}(\sigma) \subseteq F_{\mathcal{V}_1}(\sigma), F_{\mathcal{V}_2}(\sigma)$, we have that $\rho(\mathbb{Z}_{[a,b]}) \subseteq P_1, P_2$. This directly implies that $\rho(\mathbb{Z}_{[a,b]}) \subseteq P_1 \cap P_2$. In addition, it is easy to see that $\rho$ can be extended to an essential solution in $P_1 \cap P_2$, which we denote $\rho' \; : \; \mathbb{Z} \to N$, by some simple surgery on essential solutions. This is because there must be essential solutions $\rho_1, \rho_2 \; : \; \mathbb{Z} \to (P_1 \cap P_2) \setminus (E_1 \cap E_2)$ where $\rho_1(a) = \rho(a)$ and $\rho_2(b) = \rho(b)$, as $\rho(a)$ and $\rho(b)$ are both in essential solutions. Hence, $\rho'(x) = \rho_1(x)$ if $x \leq a$, $\rho'(x) = \rho(x)$ if $a \leq x \leq b$, and $\rho'(x) = \rho_2(x)$ if $b \leq x$. Since $\rho'$ is an essential solution, we have that $\rho(\mathbb{Z}_{[a,b]}) \subseteq \inv(P_1 \cap P_2)$, but also that $\rho(\mathbb{Z}_{[a,b]}) \not\subseteq \inv( (P_1 \cap P_2) \setminus (E_1 \cap E_2) )$. Therefore, we must have that $\rho(i) \in E_1 \cap E_2$. But by the same reasoning as before, it follows that $\rho(\mathbb{Z}_{[i,b]}) \subseteq E_1 \cap E_2$. Hence, $b \not\in (P_1 \cap P_2) \setminus (E_1 \cap E_2)$, a contradiction. 
\end{proof}

\begin{theorem}
Let $(P_1, E_1)$ and $(P_2,E_2)$ be index pairs in $N$ under $\mathcal{V}_1,\mathcal{V}_2$. The tuple $(P_1 \cap P_2, E_1 \cap E_2)$ is an index pair for $\inv( (P_1 \cap P_2) \setminus (E_1 \cap E_2) )$ in $N$ under $\mathcal{V}_1 \overline{\cap} \mathcal{V}_2$.
\label{thm:intersect}
\end{theorem}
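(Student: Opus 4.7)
The plan is to verify each of the four conditions of Definition~\ref{def:modindpair} in turn for the pair $(P_1\cap P_2,\ E_1\cap E_2)$ under the multivector field $\mathcal{V}_1\overline{\cap}\mathcal{V}_2$, with $S:=\inv((P_1\cap P_2)\setminus(E_1\cap E_2))$ serving as the isolated invariant set. Before doing this I would record two elementary facts that will be used repeatedly: first, intersections of closed sets are closed, so $P_1\cap P_2$ and $E_1\cap E_2$ are closed and both are contained in $N$; and second, for every simplex $\sigma$ we have $[\sigma]_{\mathcal{V}_1\overline{\cap}\mathcal{V}_2}=[\sigma]_{\mathcal{V}_1}\cap[\sigma]_{\mathcal{V}_2}$, and hence
\[
F_{\mathcal{V}_1\overline{\cap}\mathcal{V}_2}(\sigma)=\cl(\sigma)\cup\bigl([\sigma]_{\mathcal{V}_1}\cap[\sigma]_{\mathcal{V}_2}\bigr)\subseteq F_{\mathcal{V}_i}(\sigma)\qquad(i=1,2).
\]

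Conditions~(1) and~(2) of Definition~\ref{def:modindpair} are then immediate. If $\sigma\in P_1\cap P_2$ then for each $i$ the pointwise inclusion above together with the corresponding condition~(1) for $(P_i,E_i)$ gives $F_{\mathcal{V}_1\overline{\cap}\mathcal{V}_2}(\sigma)\cap N\subseteq F_{\mathcal{V}_i}(\sigma)\cap N\subseteq P_i$, so the image lies in $P_1\cap P_2$. The same argument with the $E_i$'s in place of the $P_i$'s handles $E_1\cap E_2$.

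Condition~(3) is the step I expect to require a small amount of care, since $P_1\cap P_2$ need not agree with $P_1$ or $P_2$, and a simplex in $(P_1\cap P_2)\setminus(E_1\cap E_2)$ need not lie in $P_i\setminus E_i$ for both $i$. However, the definition of set difference gives the key dichotomy: if $\sigma\in(P_1\cap P_2)\setminus(E_1\cap E_2)$ then $\sigma\notin E_1$ or $\sigma\notin E_2$, so $\sigma\in P_i\setminus E_i$ for at least one $i\in\{1,2\}$. For that $i$, condition~(3) of Definition~\ref{def:modindpair} for $(P_i,E_i)$ yields $F_{\mathcal{V}_i}(\sigma)\subseteq N$, and combining with $F_{\mathcal{V}_1\overline{\cap}\mathcal{V}_2}(\sigma)\subseteq F_{\mathcal{V}_i}(\sigma)$ completes this condition.

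Condition~(4), $S=\inv((P_1\cap P_2)\setminus(E_1\cap E_2))$, holds by the very definition of $S$, and the requirement that $S$ genuinely be isolated by $N$ under $\mathcal{V}_1\overline{\cap}\mathcal{V}_2$ is exactly the content of the preceding theorem. Putting all four pieces together, $(P_1\cap P_2,\ E_1\cap E_2)$ satisfies every clause of Definition~\ref{def:modindpair}, proving the result. The only subtlety worth emphasizing in the write-up is the case split in the verification of condition~(3); everything else is a direct consequence of intersecting the constraints satisfied by each $(P_i,E_i)$ separately.
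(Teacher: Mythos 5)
Your proposal is correct and follows essentially the same route as the paper's proof: reduce conditions (1) and (2) to the containments $F_{\mathcal{V}_1\overline{\cap}\mathcal{V}_2}(\sigma)\subseteq F_{\mathcal{V}_i}(\sigma)$, handle condition (3) via the observation that any $\sigma\in(P_1\cap P_2)\setminus(E_1\cap E_2)$ must lie in $P_i\setminus E_i$ for some $i$, and note condition (4) is tautological with isolation supplied by the preceding theorem. The only cosmetic difference is that you make the identity $[\sigma]_{\mathcal{V}_1\overline{\cap}\mathcal{V}_2}=[\sigma]_{\mathcal{V}_1}\cap[\sigma]_{\mathcal{V}_2}$ and the case split in condition (3) explicit where the paper compresses them into ``immediate by definition'' and a WLOG, respectively.
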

\begin{proof}
We proceed by using the conditions in Definition \ref{def:modindpair} to show that $(P_1 \cap P_2, E_1 \cap E_2)$ is an index pair in $N$. Note that $F_{\mathcal{V}_1 \overline{\cap} \mathcal{V}_2}(P_1 \cap P_2) \cap N \subseteq F_{\mathcal{V}_1}(P_1) \cap F_{\mathcal{V}_2}(P_2) \cap N$, which is immediate by the definition of $F$ and considering $\mathcal{V}_1 \overline{\cap} \mathcal{V}_2$. Note that since $(P_1,E_1)$ and $(P_2,E_2)$ are index pairs in $N$, we know from Definition \ref{def:modindpair} that $F_{\mathcal{V}_1}(P_1) \cap N \subseteq P_1$ and $F_{\mathcal{V}_2}(P_2) \cap N \subseteq P_2.$ Therefore $F_{\mathcal{V}_1 \overline{\cap} \mathcal{V}_2}(P_1 \cap P_2) \cap N \subseteq P_1 \cap P_2$. This implies the first condition in Definition \ref{def:modindpair}. This argument also implies the second condition by replacing $P$ with $E$. 

Now, we aim to show that $(P_1 \cap P_2, E_1 \cap E_2)$ satisfies condition three in Definition \ref{def:modindpair}. Consider $\sigma \in (P_1 \cap P_2) \setminus (E_ 1 \cap E_2)$. Without loss of generality, we assume $\sigma \not\in E_1$. Therefore, $\sigma \in P_1 \setminus E_1$, so $F_{\mathcal{V}_1}(\sigma) \subseteq N$ by the definition of an index pair in $N$. Hence, since $F_{\mathcal{V}_1 \overline{\cap} \mathcal{V}_2}(\sigma) \subseteq F_{\mathcal{V}_1}(\sigma)$, condition three is satisfied. 

Finally, note that $\inv( (P_1 \cap P_2) \setminus ( E_1 \cap E_2 ) )$ is obviously equal to $\inv( (P_1 \cap P_2) \setminus ( E_1 \cap E_2 ) )$, so condition four holds as well. 
\end{proof}
Hence, if $(P_i, E_i)$ are index pairs in $N$, these theorems gives a meaningful notion of persistence of Conley index through
the decomposition of the following zigzag persistence module:
\begin{eqnarray}\adjustbox{scale = 1}{
    \begin{tikzcd}[column sep = tiny]
        \centering
        H_p(P_1,E_1)
        & H_p(P_1\cap P_2, E_1\cap E_2) \arrow[l] \arrow[r]
        & H_p(P_2,E_2)
        & \cdots \arrow[l] \arrow[r]
        & H_p(P_n,E_n).
    \end{tikzcd}
    \label{zzpers}
}\end{eqnarray}
Because of the previous two theorems, when one decomposes the above zigzag module, one is actually capturing a changing Conley index. This contrasts the case where one only considers index pairs of the form $(\cl(S_i), \mo(S_i))$, because $(\cl(S_i) \cap \cl(S_{i+1}), \mo(S_i) \cap \mo(S_{i+1}))$ need not be an index pair for any invariant set. 

As has been established, the pair $(\cl(S), \mo(S))$ is an index pair, but it need not be an index pair in $N$. We introduce a canonical approach to transform $(\cl(S), \mo(S))$ to an index pair in $N$ by using the \textit{push forward}.
\begin{definition}
    The {\em push forward} $\pf(S)$ of a set $S$ in $N$, $N$ closed, is the set of all simplices in $S$ together with those $\sigma \in N$ such that there exists a path $\rho \; : \; \mathbb{Z}_{[a,b]} \to N$ where $\rho(a) \in S$ and $\rho(b) = \sigma$. 
\end{definition}

If $N$ is not clear from context, we use the notation $\pf_N(S)$. The next series of results imply that an index pair in $N$ can be obtained by taking the push forward of $(\cl(S),\mo(S))$. 
\begin{proposition}
If $S \subseteq K$ is an isolated invariant set with isolating neighborhood $N$ under $\mathcal{V}$, then $\pf(\mo(S)) \cap \cl(S) = \mo(S)$.
\label{lem:pfinter}
\end{proposition}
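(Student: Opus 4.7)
The plan is to prove both inclusions separately, with the nontrivial direction exploiting the isolation property of $N$.

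The inclusion $\mo(S) \subseteq \pf(\mo(S)) \cap \cl(S)$ is immediate: every simplex is in its own push forward via the trivial one-step path, and $\mo(S) \subseteq \cl(S)$ by definition. So the content is in showing $\pf(\mo(S)) \cap \cl(S) \subseteq \mo(S)$.

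For the nontrivial direction, I would take $\sigma \in \pf(\mo(S)) \cap \cl(S)$ and assume for contradiction that $\sigma \notin \mo(S)$. Since $\mo(S) = \cl(S) \setminus S$, this forces $\sigma \in S$. Membership of $\sigma$ in $\pf(\mo(S)) \setminus \mo(S)$ then provides a path $\rho \colon \mathbb{Z}_{[a,b]} \to N$ with $\rho(a) \in \mo(S)$ and $\rho(b) = \sigma$. The key trick is to prepend one step to $\rho$ so that it begins inside $S$: because $\rho(a) \in \mo(S) \subseteq \cl(S)$, there exists $\tau \in S$ with $\rho(a) \leq \tau$, and then $\rho(a) \in \cl(\tau) \subseteq F_{\mathcal{V}}(\tau)$, so the extended map $\tilde{\rho} \colon \mathbb{Z}_{[a-1,b]} \to K$ defined by $\tilde{\rho}(a-1) = \tau$ and $\tilde{\rho}(i) = \rho(i)$ otherwise is a legitimate path. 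Since $\tau \in S \subseteq N$ and the rest of $\rho$ already lies in $N$, this extended path stays in $N$.

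Now I apply the isolation hypothesis: $\tilde{\rho}$ is a path in $N$ with both endpoints $\tau$ and $\sigma$ in $S$, so isolation forces $\tilde{\rho}(\mathbb{Z}_{[a-1,b]}) \subseteq S$. In particular $\rho(a) = \tilde{\rho}(a) \in S$, contradicting $\rho(a) \in \mo(S) = \cl(S) \setminus S$.

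The only delicate step is the prepending move, and it hinges on the somewhat subtle observation that elements of $\mo(S)$, while not themselves in $S$, are always faces of something in $S$ (by definition of $\cl(S)$), which lets us slide one step backward along the closure portion of $F_{\mathcal{V}}$. Everything else is bookkeeping.
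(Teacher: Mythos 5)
Your proof is correct and follows essentially the same route as the paper's: the easy inclusion by definition, then a contradiction argument that prepends one step to the path witnessing $\sigma \in \pf(\mo(S))$ using the fact that any element of $\mo(S)$ is a proper face of some simplex in $S$, yielding a path in $N$ with both endpoints in $S$ but passing through $\mo(S)$, violating isolation. The exposition is slightly cleaner than the paper's but the idea and all intermediate steps are identical.
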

\begin{proof}
Note that by definition, $\mo(S) \subseteq \cl(S)$ and $\mo(S) \subseteq \pf(\mo(S))$, so it follows that $\mo(S) \subseteq \cl(S) \cap \pf(\mo(S))$. Hence, it is sufficient to show that $\cl(S) \cap \pf(\mo(S)) \subseteq \mo(S)$. Aiming for a contradiction, assume there exists a $\sigma \in \cl(S) \cap \pf(\mo(S))$ where $\sigma \not\in \mo(S)$. This directly implies that $\sigma \in \cl(S) \setminus \mo(S)$.  But by Proposition \ref{prop:clomo}, $\cl(S) \setminus \mo(S) = S$, so $\sigma \in S$. But since $\sigma \in \pf(\mo(S))$, there exists a path $\rho \; : \; \mathbb{Z}_{[a,b]} \to N$ where $\rho(a) \in \mo(S)$ and $\rho(b) = \sigma$. Because $\rho(a) \in \mo(S)$, there exists a $\sigma' \in S$ such that $\rho(a) \leq \sigma'$.  This implies that there exists a path $\rho' \; : \; \mathbb{Z}_{[a-1,b]} \to N$ where $\rho(a-1) = \sigma'$ and $\rho(b) = \sigma$, but $\rho(a) \not\in S$. Hence, $S$ is not isolated by $N$, a contradiction. 
\end{proof}
\begin{proposition}
If $S \subseteq K$ is an isolated invariant set with isolating neighborhood $N$ under $\mathcal{V}$, then $\pf(\mo(S)) \cup \cl(S) = \pf(\cl(S))$.
\label{lem:pfunion}
\end{proposition}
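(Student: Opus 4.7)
The plan is to prove the two inclusions separately. The inclusion $\pf(\mo(S)) \cup \cl(S) \subseteq \pf(\cl(S))$ is essentially immediate: since $\mo(S) \subseteq \cl(S)$, any path witnessing a simplex in $\pf(\mo(S))$ also witnesses it for $\pf(\cl(S))$, and $\cl(S) \subseteq \pf(\cl(S))$ is built into the definition of the push forward. So all the substance lies in the reverse inclusion.

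For $\pf(\cl(S)) \subseteq \pf(\mo(S)) \cup \cl(S)$, I would pick $\sigma \in \pf(\cl(S)) \setminus \cl(S)$ together with a witnessing path $\rho \; : \; \mathbb{Z}_{[a,b]} \to N$ with $\rho(a) \in \cl(S)$ and $\rho(b) = \sigma$. If $\rho(a) \in \mo(S)$, the very same path already shows $\sigma \in \pf(\mo(S))$, so I may assume $\rho(a) \in S$. Because $\sigma \notin \cl(S) \supseteq S$, the path must eventually leave $S$; let $k$ be the largest index with $\rho(k) \in S$, so $k < b$ and $\rho(k+1) \notin S$. The target is to show $\rho(k+1) \in \mo(S)$, so that the tail $\rho|_{\mathbb{Z}_{[k+1,b]}}$ certifies $\sigma \in \pf(\mo(S))$.

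Since $\rho(k+1) \in F_\mathcal{V}(\rho(k)) = \cl(\rho(k)) \cup [\rho(k)]$, I split into two cases. If $\rho(k+1) \leq \rho(k)$, then $\rho(k+1) \in \cl(\rho(k)) \subseteq \cl(S)$, and combined with $\rho(k+1) \notin S$ this gives $\rho(k+1) \in \mo(S)$. The delicate case is $\rho(k+1) \in [\rho(k)]$, because a priori a multivector-sibling of $\rho(k)$ need not lie in $\cl(S)$. Here I invoke isolation: since $[\rho(k+1)] = [\rho(k)]$, the short sequence $\rho(k) \to \rho(k+1) \to \rho(k)$ is a valid path in $N$ whose endpoints both equal $\rho(k) \in S$, so the definition of isolating neighborhood forces every intermediate simplex into $S$, contradicting $\rho(k+1) \notin S$. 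Hence this case cannot occur.

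The main obstacle is precisely this multivector-sibling case, and the key observation that unlocks it is the symmetry $\tau \in [\sigma] \Leftrightarrow \sigma \in [\tau]$, which manufactures a round-trip path that isolation collapses into $S$. Once that case is ruled out, the face case is routine and the two inclusions fit together to yield the claimed equality.
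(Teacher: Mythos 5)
Your proof is correct and uses essentially the same key idea as the paper: locate the step where the witness path exits the invariant region, rule out the multivector-sibling case via the round-trip path that isolation collapses into $S$, and conclude the step lands in $\mo(S)$. The paper frames it as a contradiction by tracking exit from $\cl(S)$ (where the face case is automatically impossible), whereas you argue directly by tracking exit from $S$ (where the face case is the success case), but the isolation argument at the heart of both is identical.
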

\begin{proof}
Note that since $\pf(\mo(S)) \subseteq \pf(\cl(S))$ and $\cl(S) \subseteq \pf(\cl(S))$, it follows immediately that $\pf(\mo(S)) \cup \cl(S) \subseteq \pf(\cl(S))$. Hence, it is sufficient to show that $\pf(\cl(S)) \subseteq \pf(\mo(S)) \cup \cl(S)$. Aiming for a contradiction, we assume there exists a $\sigma \in \pf(\cl(S))$ such that $\sigma \not \in \pf(\mo(S)) \cup \cl(S)$. Note that since $\sigma \not \in \pf(\mo(S))$, it follows that there must exist a path $\rho \; : \; \mathbb{Z}_{[a,b]} \to N$ where $\rho(i) \not\in\mo(S)$ for all $i$. Else, $\rho(b) = \sigma$ would be in $\pf(\mo(S))$, a contradiction. Hence, there must exist a $\rho(i) \in \cl(S)\setminus\mo(S) = S$ such that $\rho(i+1) \in F_\mathcal{V}(\rho(i))$  and $\rho(i+1) \not\in \cl(S)$. Note that $\rho(i+1)$ cannot be a face of $\rho(i)$, else $\rho(i+1) \in \cl(S)$. Hence, $[\rho(i+1)] = [\rho(i)]$, which means one can construct a path $\rho' \; : \; \mathbb{Z}_{[0,2]} \to N$ where $\rho'(0) = \rho'(2) = \rho(i)$ and $\rho(1) = \rho(i+1)$. But this is a path with endpoints in $S$ which is not contained in $S$, which contradicts $S$ being isolated by $N$. 
\end{proof}
\begin{proposition}
If $S \subseteq K$ is an isolated invariant set with isolating neighborhood $N$, then $\pf(\cl(S)) \setminus \pf(\mo(S)) = \cl(S) \setminus \mo(S) = S$. 
\label{lem:ipeq}
\end{proposition}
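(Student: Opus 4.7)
The plan is to derive this identity purely set-theoretically from the two preceding propositions (\ref{lem:pfinter} and \ref{lem:pfunion}) together with the already-established fact, coming from Proposition \ref{prop:clomo}, that $\cl(S)\setminus\mo(S)=S$. So the right-hand equality $\cl(S)\setminus\mo(S)=S$ is free, and the real content is showing $\pf(\cl(S))\setminus\pf(\mo(S))=\cl(S)\setminus\mo(S)$.

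First I would apply Proposition \ref{lem:pfunion} to rewrite the left-hand side:
\[
\pf(\cl(S))\setminus\pf(\mo(S)) \;=\; \bigl(\pf(\mo(S))\cup\cl(S)\bigr)\setminus\pf(\mo(S)) \;=\; \cl(S)\setminus\pf(\mo(S)),
\]
where the last step is the elementary identity $(A\cup B)\setminus A=B\setminus A$.

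Next I would use Proposition \ref{lem:pfinter}, which says $\pf(\mo(S))\cap\cl(S)=\mo(S)$, to convert the subtraction of $\pf(\mo(S))$ into a subtraction of $\mo(S)$:
\[
\cl(S)\setminus\pf(\mo(S)) \;=\; \cl(S)\setminus\bigl(\cl(S)\cap\pf(\mo(S))\bigr) \;=\; \cl(S)\setminus\mo(S),
\]
using the general identity $X\setminus Y = X\setminus(X\cap Y)$.

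Finally, invoking Proposition \ref{prop:clomo}, which states that $(\cl(S),\mo(S))$ is a saturated index pair and hence $\cl(S)\setminus\mo(S)=S$, completes the chain of equalities. I do not anticipate any real obstacle here: the work has already been done in Propositions \ref{lem:pfinter} and \ref{lem:pfunion}, and this proposition is essentially their bookkeeping corollary, which is what makes it suitable for certifying that the push-forward construction produces an index pair in $N$ for $S$ itself (rather than for some larger invariant set).
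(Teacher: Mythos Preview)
Your proposal is correct and follows essentially the same approach as the paper: both arguments combine Propositions~\ref{lem:pfinter} and~\ref{lem:pfunion} with Proposition~\ref{prop:clomo} via elementary set-theoretic identities to obtain the chain of equalities. Your presentation of the set manipulations is in fact slightly cleaner than the paper's, but the underlying logic is identical.
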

\begin{proof}
First, we note that by Proposition \ref{lem:pfunion}, we have that $\pf(\mo(S)) \cup \cl(S) = \pf(\cl(S))$. From Proposition \ref{lem:pfinter} we have that $\pf(\mo(S)) \cap \cl(S) = \mo(S)$, which together with the fact that $\cl(S) \subseteq \mo(S)$ implies that $\pf(\mo(S)) \cup (\cl(S) \setminus \mo(S) ) = \pf(\cl(S))$. Since $\pf(\mo(S))$ and $\cl(S) \setminus \mo(S)$ are disjoint, it follows that that $\cl(S) \setminus \mo(S) = \pf(\cl(S)) \setminus \pf(\mo(S))$. By Proposition \ref{prop:clomo}, $(\cl(S), \mo(S))$ is a saturated index pair for $S$, so it follows that $\pf(\cl(S)) \setminus \pf(\mo(S)) = \cl(S) \setminus \mo(S) = S$. 
\end{proof}
Crucially, from these propositions we get the following. 
\begin{theorem}
If $S$ is an isolated invariant set then $(\pf(\cl(S)), \pf(\mo(S)))$ is an index pair in $N$ for $S$. 
\label{thm:pfip}
\end{theorem}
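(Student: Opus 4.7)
The plan is to verify each of the four conditions of Definition \ref{def:modindpair} for the pair $(P,E) := (\pf(\cl(S)), \pf(\mo(S)))$, leveraging the three propositions just established. Condition~4 is immediate: Proposition~\ref{lem:ipeq} gives $P \setminus E = S$, and since $S$ is invariant, $\inv(P \setminus E) = \inv(S) = S$.

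For conditions~1 and~2, I would first extract a general lemma on push forwards: \emph{if $A \subseteq N$, then $F_\mathcal{V}(\pf(A)) \cap N \subseteq \pf(A)$}. The proof is a one-step path extension. If $\sigma \in \pf(A)$, there is a path $\rho : \mathbb{Z}_{[a,b]} \to N$ with $\rho(a) \in A$ and $\rho(b) = \sigma$ (a trivial length-zero path in the case $\sigma \in A$); appending any $\tau \in F_\mathcal{V}(\sigma) \cap N$ as $\rho(b+1)$ produces a path in $N$ originating in $A$ and terminating at $\tau$, placing $\tau \in \pf(A)$. Instantiating with $A = \cl(S)$ and with $A = \mo(S)$, both of which lie in $N$ because $S \subseteq N$ and $N$ is closed, gives conditions~1 and~2.

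Condition~3 is the substantive step. By Proposition~\ref{lem:ipeq}, $P \setminus E = S$, so the assertion reduces to $F_\mathcal{V}(S) \subseteq N$. Fix $\sigma \in S$ and $\tau \in F_\mathcal{V}(\sigma)$. If $\tau \leq \sigma$ then $\tau \in \cl(S) \subseteq N$ and we are done. The substantive case is $\tau \in [\sigma]$. Here I would use invariance of $S$ to produce a bi-infinite essential solution $\rho$ in $S$ with $\rho(0) = \sigma$, and exploit the fact that $\sigma$ and $\tau$ share a multivector — so that $\sigma \to \tau \to \sigma$ is a valid two-step loop — to splice a detour through $\tau$ into $\rho$, obtaining a path whose endpoints lie in $S$. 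The isolation hypothesis should then force $\tau$ to sit in $N$ (indeed in $S$), completing the verification.

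I expect this last step to be the main obstacle, because the spliced detour only lies in $N$ provided $\tau \in N$ — precisely what we want to conclude — so a bare syntactic appeal to isolation is circular. The honest argument must exploit the convexity of the multivector $[\sigma]$ together with essentiality of $\rho$ in $S$ to rule out $\tau \notin N$; once this subtlety is resolved, the remaining content of Definition~\ref{def:modindpair} follows formally from the preceding propositions.
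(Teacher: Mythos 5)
Your verification of conditions 1, 2, and 4 matches the paper's argument almost exactly: the one-step path extension for conditions 1 and 2, and Proposition~\ref{lem:ipeq} together with saturation for condition~4.

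The gap you flag in condition~3 is real, and your proposed escape route is the wrong one. The splice $\sigma \to \tau \to \sigma$ cannot invoke isolation until $\tau$ is known to lie in $N$, because Definition~\ref{def:indpair}'s isolation condition only constrains paths $\rho:\mathbb{Z}_{[a,b]}\to N$; it says nothing about sequences that wander outside $N$. Convexity of $[\sigma]$ plus essentiality of $\rho$ does not obviously resolve this either — convexity controls simplices sandwiched between members of the multivector, which is not the configuration you have. The actual fix is much shorter and does not require re-deriving anything from isolation. You already know from Proposition~\ref{lem:ipeq} that $P\setminus E = \cl(S)\setminus\mo(S)$, so condition~3 reduces to $F_\mathcal{V}(\cl(S)\setminus\mo(S))\subseteq N$. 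Now Proposition~\ref{prop:clomo} says $(\cl(S),\mo(S))$ is an index pair in the sense of Definition~\ref{def:indpair}, and its condition~2 gives $F_\mathcal{V}(\cl(S)\setminus\mo(S))\subseteq\cl(S)$. Since $S\subseteq N$ and $N$ is closed, $\cl(S)\subseteq N$, and you are done. The isolation hypothesis is used, but only indirectly — it has already been packaged into the cited Proposition~\ref{prop:clomo}, so there is no need to reopen it with a bespoke splicing argument.
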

\begin{proof}
First, we note that since the index pair $(\cl(S),\mo(S))$ is saturated, it follows that $S = \inv(\cl(S) \setminus \mo(S)) = \cl(S) \setminus \mo(S)$. But since by Proposition \ref{lem:ipeq} $\cl(S) \setminus \mo(S) = \pf(\cl(S)) \setminus \pf(\mo(S))$, it follows that $S = \pf(\cl(S)) \setminus \pf(\mo(S)) = \inv( \pf(\cl(S)) \setminus \pf(\mo(S)))$, which satisfies condition four of being an index pair in $N$. 

We show that $F_\mathcal{V}(\pf(\cl(S))) \cap N \subseteq \pf(\cl(S))$. Let $x \in \pf(\cl(S))$, and assume that $y \in F_\mathcal{V}(x) \cap N$. There must be a path $\rho \; : \; \mathbb{Z}_{[a,b]} \to N$ where $\rho(a) \in \cl(S)$ and $\rho(b) = x$, by the definition of the push forward. Thus, we can construct an analogous path $\rho' \; : \; \mathbb{Z}_{[a,b+1]} \to N$ where $\rho'(i) = \rho(i)$ for $i \in \mathbb{Z}_{[a,b]}$ and $\rho'(b+1) = y$. Hence, $y \in \pf(\cl(S))$ by definition. Identical reasoning can be used to show that $F_\mathcal{V}(\pf(\mo(S))) \cap N \subseteq \pf(\mo(S))$, so $(\pf(\cl(S)),\pf(\mo(S)))$ also meets the first two conditions required to be an index pair. 

Finally, we show that $F_\mathcal{V}( \pf(\cl(S)) \setminus \pf(\mo(S)) ) \subseteq N$. By Proposition \ref{lem:ipeq}, this is equivalent to showing that $F_\mathcal{V}( \cl(S) \setminus \mo(S) ) \subseteq N$. Since $(\cl(S),\mo(S))$ is an index pair for $S$, it follows that $F_\mathcal{V}(\cl(S) \setminus \mo(S)) \subseteq \cl(S)$. Note that since $N \supseteq S$ is closed, it follows that $\cl(S) \subseteq N$. Hence, $F_\mathcal{V}( \pf(\cl(S)) \setminus \pf(\mo(S)) ) \subseteq N$, and all conditions for an index pair in $N$ are met. 
\end{proof}

An example of an index pair induced by the push forward can be seen in Figure \ref{mouthetc-fig}. Hence, instead of considering a zigzag filtration given by a sequence of index pairs $(\cl(S_1), \mo(S_1))$, $(\cl(S_2), \mo(S_2))$, \ldots, $(\cl(S_n), \mo(S_n))$, a canonical choice is to instead consider the zigzag filtration given by the the sequence of index pairs \[(\pf(\cl(S_1)), \pf(\mo(S_1))), (\pf(\cl(S_2)), \pf(\mo(S_2))), \ldots, (\pf(\cl(S_n)), \pf(\mo(S_n))).\] 

Choosing $S_i$ is highly application specific, so in our implementation we choose $S_i := \inv_{\mathcal{V}_i}(N)$. This decision together with the previous theorems gives Algorithm \ref{alg:fixedN} for computing the persistence of the Conley Index.
\begin{algorithm2e}
\SetAlgoLined
\KwIn{ Sequence of multivector fields $\mathcal{V}_1, \mathcal{V}_2, \ldots, \mathcal{V}_n$, closed set $N \subseteq K$. } 
\KwOut{ Barcodes corresponding to persistence of Conley Index }
    
    $i \gets 1$
    
    \While{$i <= n$}{
    
        $S_i \gets \inv_{\mathcal{V}_i}(N)$
        
        $\left( P_i, E_i \right) \gets \left( \pf\left( \cl\left( S_i \right) \right), \pf\left( \mo\left(S_i\right) \right) \right)$
    
        $i \gets i + 1$
    }
    
    \Return{ ${\tt zigzagPers}\left( \left(P_1, E_1\right) \supseteq \left(P_1 \cap P_2, E_1 \cap E_2\right) \subseteq \left(P_2, E_2\right) \supseteq \ldots \subseteq \left(P_n, E_n\right) \right)$}

 \caption{Scheme for computing the persistence of the Conley Index, fixed $N$}
 \label{alg:fixedN}
\end{algorithm2e}
Index pairs and barcodes computed by Algorithm \ref{alg:fixedN} can be seen in Figure \ref{fig:constantexp}.

\begin{figure}[htbp]
\centering
\begin{tabular}{ccc}
  \includegraphics[height=39mm]{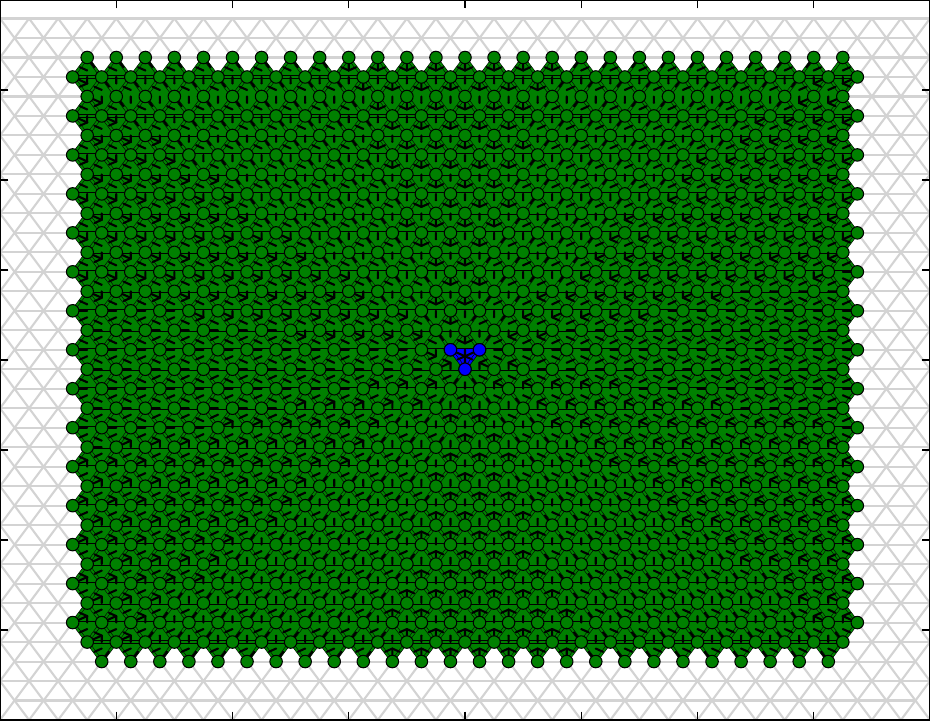}&
  \includegraphics[height=39mm]{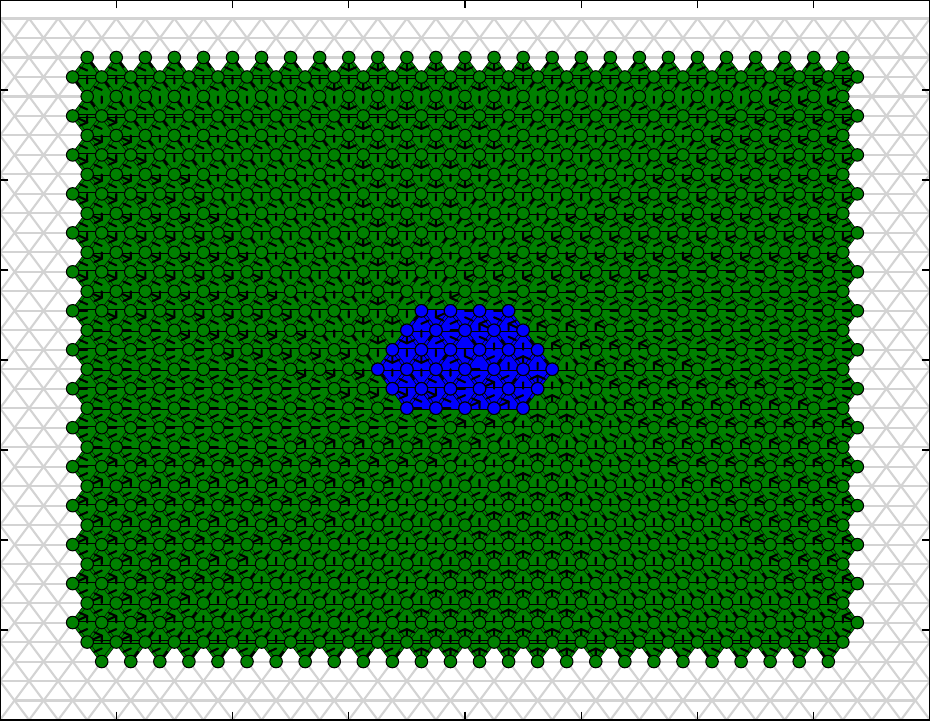}&
  \includegraphics[height=39mm]{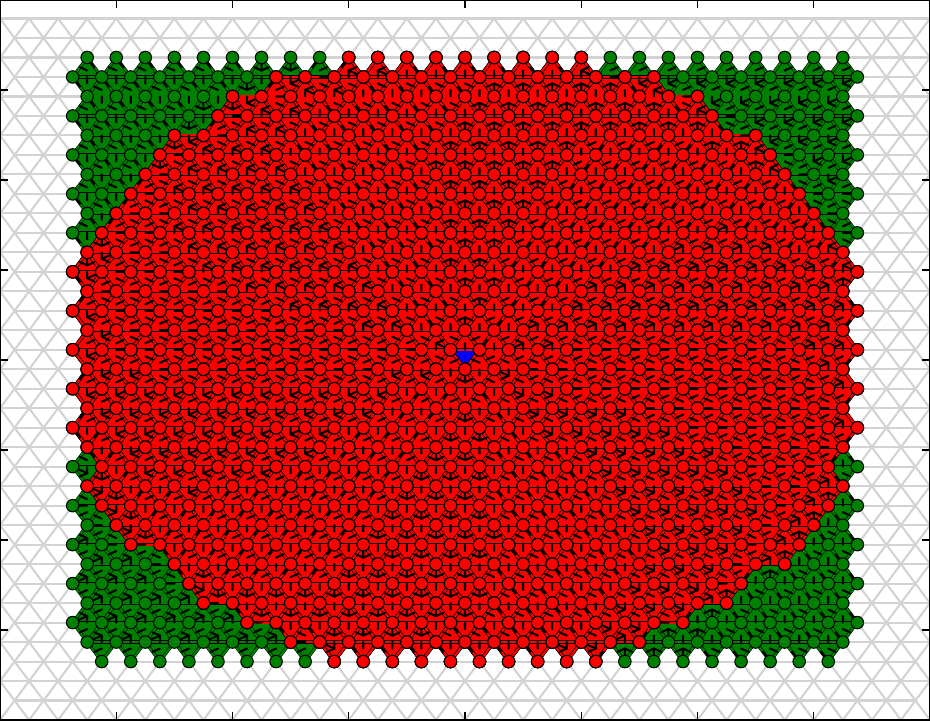}\\
  \multicolumn{3} {l} {
  \begin{tikzpicture}[outer sep = 0, inner sep = 0]
  \draw[fill=red,outer sep=0, inner sep=0] (0.0,-0.3) -- (10.89,-0.3) -- (10.89,0.1) -- (0.0,0.1);
  \end{tikzpicture} } \\
  \multicolumn{3} {r} {
  \begin{tikzpicture}[outer sep = 0, inner sep = 0]
  \draw[fill=blue,outer sep=0, inner sep=0] (-5.03,-0.3) -- (0.0,-0.3) -- (0.0,0.1) -- (-5.03,0.1);
  \end{tikzpicture} } 
\end{tabular}
\caption{Examples of index pairs computed by using the push forward on multivector fields induced by a differential equation. A sequence of multivector fields was generated from a $\lambda$-parametrized differential equation undergoing supercritical Hopf bifurcation \cite[Section 11.2]{HaleKocak1991}. The consecutive images (from left to right) present a selection from this sequence: the case when $\lambda<0$ and there is only an attracting fixed point inside $N$; the case when $\lambda>0$ is small and $N$ contains a repelling fixed point, a small attracting periodic trajectory and all connecting trajectories; the case when $\lambda>0$ is large and the periodic trajectory is no longer contained in $N$. In all three images, we depict $N$ in green, $E$ in red, and $P \setminus E$ in blue. Note that in the leftmost image, the only invariant set is a triangle which represents an attracting fixed point. For this invariant set in this $N$, the only relative homology group which is nontrivial is $H_0(P, E)$, which has a single homology generator. In the middle image, the invariant sets represent a repelling fixed point, a periodic attractor, and heteroclinic orbits which connect the repelling fixed point with the periodic attractor. Note that the relative homology has not changed from the leftmost case, so the only nontrivial homology group is $H_0(P,E)$. In the rightmost image, the periodic attractor is no longer entirely contained within $N$, so the only invariant set corresponds to a repelling fixed point. Here, the only nontrivial homology group is $H_2(P,E)$, which has one generator, so the Conley index has changed. Algorithm \ref{alg:fixedN} captures this change. The persistence barcode output by Algorithm \ref{alg:fixedN} is below index pairs, where a $H_0$ generator (red bar) lasts until the periodic trajectory leaves $N$, at which point it is replaced by an $H_2$ generator (blue bar). }
\label{fig:constantexp}
\end{figure} 

\subsection{Noise-Resilient Index Pairs}

The strategy given for producing index pairs in $N$ produces saturated index pairs. Equivalently, the cardinality of $P \setminus E$ is minimized. This is problematic in the presence of noise, where if $\mathcal{V}_2$ is a slight perturbation of $\mathcal{V}_1$ we frequently have that $\inv_{\mathcal{V}_1}(N) \neq \inv_{\mathcal{V}_2}(N)$. This gives a perturbation in our generated index pairs and in particular a perturbation in $P \setminus E$. As the Conley Index is obtained by taking relative homology, taking the intersection of index pairs $(P_1, E_2)$ and $(P_2, E_2)$ where $P_i \setminus E_i = \inv_{\mathcal{V}_i}(N)$ can result in a ``breaking'' of bars in the barcode. An example can be seen in Figure~\ref{mocl-fig}, where because of noise, the two $P \setminus E$ do not overlap, and hence a $2$-dimensional homology class which intuitively should persist throughout the interval does not. In Figure~\ref{mocl-fig}, the Conley indices of the invariant sets consisting of the singleton critical triangles in $\mathcal{V}_1$ and $\mathcal{V}_2$ (the left and right multivector fields) have rank $1$ in dimension $2$ because the homology group $H_2$ of $P$ (which is the entire complex in both cases) relative to $E$ (which is all pink simplices) has rank $1$. However, the generators for $H_2(P_1,E_1)$ and $H_2(P_2,E_2)$ are both in the intersection field $\mathcal{V}_1\overline{\cap}\mathcal{V}_2$. Hence, rather than one generator persisting through all three multivector fields, we get two bars that overlap at the intersection field. The difficulty is rooted in the fact that the sets $W_1=P_1\setminus E_1$, $W_2=P_2\setminus E_2$, and $W_{12}=(P_1\cap P_2)\setminus (E_1\cap E_2)$ do not have a common intersection.

\begin{figure}[htbp]
\centering
\begin{tabular}{ccc}
  \includegraphics[height=34mm]{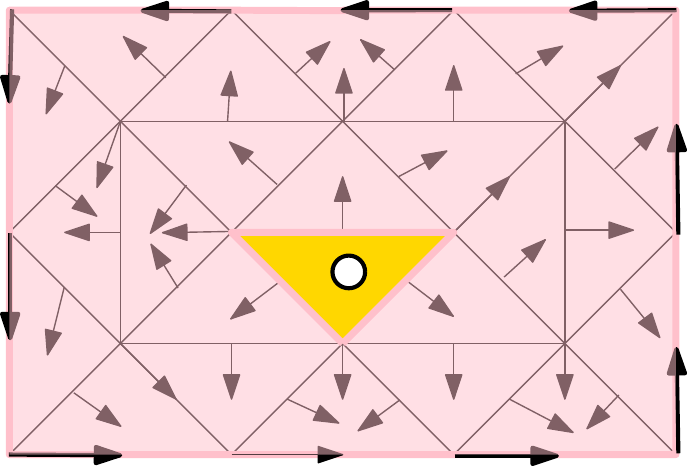}&
  \includegraphics[height=34mm]{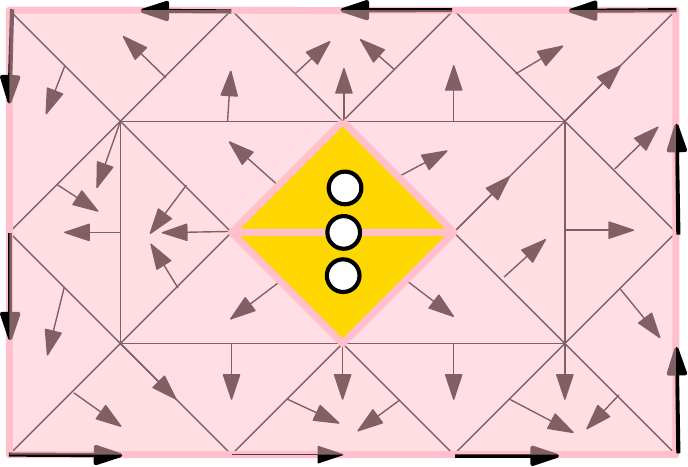}&
  \includegraphics[height=34mm]{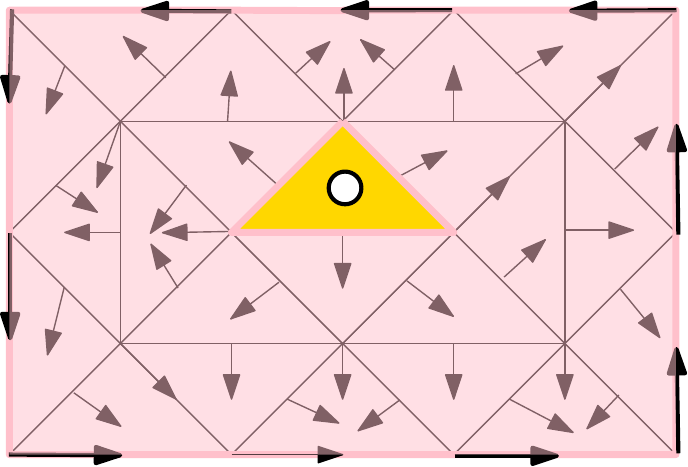}\\
  \multicolumn{3} {l} {
  \begin{tikzpicture}[outer sep = 0, inner sep = 0]
  \draw[fill=blue,outer sep=0, inner sep=0] (0.0,-0.3) -- (10.9,-0.3) -- (10.9,0.1) -- (0.0,0.1);
  \end{tikzpicture} } \\
  \multicolumn{3} {r} {
  \begin{tikzpicture}[outer sep = 0, inner sep = 0]
  \draw[fill=blue,outer sep=0, inner sep=0] (-10.9,-0.3) -- (0.0,-0.3) -- (0.0,0.1) -- (-10.9,0.1);
  \end{tikzpicture} }
\end{tabular}
\caption{Infeasibilty of the index pair $(\pf(\cl(S)),\pf(\mo(S)))$: The sets $E=\pf(\mo(S))$ are colored pink in all three images, while the invariant sets which equal $P\setminus E$ are golden in all three images. (left) ${\mathcal{V}_1}:  P_{1} \setminus E_{1}$ consists of a single golden triangle;
(right) ${\mathcal{V}_2}$:  $P_2\setminus E_2$ consists of the single golden triangle; (middle) $(P_1\cap P_2)\setminus (E_1\cap E_2)$ consists of two golden triangles (excluding the edge between them) in the intersection field $\mathcal{V}_1\overline{\cap}\mathcal{V}_2$. The barcode for index pairs is depicted by two blue bars, each of which represents a $2$-dimensional homology generator. Ideally, these would be a single bar. }
\label{mocl-fig}
\end{figure} 

To address this problem, we propose an algorithm to expand the size of $P \setminus E$. It is important to note that a balance is needed to ensure a large $E$ as well as a large $P \setminus E$. If $E_1$ and $E_2$ are too small, then it is easy to see that $E_1$ and $E_2$ may not intersect as expected even though consecutive vector fields are very similar. 
The following proposition is very useful for computing a balanced index pair.
\begin{proposition}
Let $(P,E)$ be an index pair for $S$ in $N$ under $\mathcal{V}$. If $V \subseteq E$ is a regular multivector where $E':= E\setminus V$ is closed, then $(P,E')$ is an index pair for $S$ in $N$. 
\label{enlarge-prop}
\end{proposition}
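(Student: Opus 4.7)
The plan is to verify each of the four conditions of Definition~\ref{def:modindpair} for the pair $(P, E')$. Before doing so, I would record one structural observation: since $V \subseteq E$ and $E$ is closed, we have $\mo(V) = \cl(V) \setminus V \subseteq E \setminus V = E'$, and since $V$ partitions against every other multivector, $V$ is disjoint from $E'$ and from $P \setminus E$. Consequently, $P \setminus E' = (P \setminus E) \cup V$ is a disjoint union.

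Conditions 1--3 are then essentially bookkeeping. Condition~1 is inherited verbatim from $(P,E)$ being an index pair in $N$. For condition~2, given $\sigma \in E'$, the set $F_\mathcal{V}(\sigma) = \cl(\sigma) \cup [\sigma]$ avoids $V$: $[\sigma] \neq V$ because $\sigma \notin V$, and $\cl(\sigma) \subseteq E'$ because $E'$ is closed. Combined with the original condition $F_\mathcal{V}(\sigma) \cap N \subseteq E$, this gives $F_\mathcal{V}(\sigma) \cap N \subseteq E \setminus V = E'$. For condition~3, on $P \setminus E$ the original index pair gives $F_\mathcal{V}(\sigma) \subseteq N$; on $V$, one has $F_\mathcal{V}(\sigma) = \cl(\sigma) \cup V \subseteq \cl(V) \subseteq E \subseteq P \subseteq N$.

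The main work is condition~4, the identity $S = \inv(P \setminus E')$. The inclusion $S = \inv(P \setminus E) \subseteq \inv(P \setminus E')$ is immediate. For the reverse, suppose $\sigma \in \inv(P \setminus E')$ and let $\rho \colon \mathbb{Z} \to P \setminus E'$ be an essential solution with $\rho(0) = \sigma$. I plan to show that $\rho$ never visits $V$, which forces $\rho \colon \mathbb{Z} \to P \setminus E$ and hence $\sigma \in \inv(P \setminus E) = S$. If $\rho(i) \in V$, then $\rho(i+1) \in F_\mathcal{V}(\rho(i)) \cap (P \setminus E') \subseteq \cl(V) \cap (P \setminus E')$. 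But $\cl(V) = V \cup \mo(V)$ and $\mo(V) \subseteq E'$ is disjoint from $P \setminus E'$, so $\rho(i+1) \in V$. By induction, $\rho(j) \in V$ for all $j \geq i$; this means $[\rho(j)] = V$ for every $j \geq i$, contradicting the essential-solution condition applied at $j=i$ (which demands some $i^+ > i$ with $[\rho(i^+)] \neq V$, using that $V$ is regular).

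The main obstacle is precisely this last argument: everything hinges on the regularity of $V$ preventing an essential solution from becoming trapped in $V$ in the forward direction, together with the closedness of $E'$ which ensures $\mo(V) \subseteq E'$ so that escape routes out of $V$ within $P \setminus E'$ are blocked. The other conditions reduce to routine set manipulations once the disjoint decomposition $P \setminus E' = (P \setminus E) \cup V$ and the containment $\mo(V) \subseteq E'$ are in hand.
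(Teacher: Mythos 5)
Your proposal is correct and follows essentially the same route as the paper's proof: conditions 1--3 are verified by the same routine case analysis, and condition 4 hinges on the same two facts -- that $\mo(V) \subseteq E'$ (so an essential solution in $P \setminus E'$ cannot exit $V$ downward) and that $V$ being regular forces any essential solution to leave $V$, yielding the contradiction. Your phrasing of condition 4 (``once $\rho$ enters $V$ it is trapped, violating essentiality'') is a clean contrapositive of the paper's (``take the first exit point from $V$ and land in $E'$''), but the underlying argument is the same.
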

\begin{proof}
Note that since $P$ does not change, it is immediate the $P$ satisfies $F_\mathcal{V}(P) \cap N \subseteq P$, and the first condition of an index pair in $N$ is met. 

We show that $F_\mathcal{V}(E') \cap N \subseteq E$. For a contradiction, we assume that there exists an $x \in E'$ such that there is a $y \in F_\mathcal{V}(x) \cap N$, $y \not\in E'$. Note that if $y \leq x$, then by definition $y$ is in the closure of $x$. Since $E$ is closed and $x \in E$, it follows that $y \in E$. But since $y \not \in E'$, it follows that $y \in V$. But this is a contradiction since by assumption, $E \setminus V$ is closed. Hence, by definition of $F_\mathcal{V}$, since $y \not\in \cl(x)$, $y$ and $x$ must be in the same multivector. Note that in such a case $y \not \in V$, as if it were, $x$ would not be in $E'$ since $x$ and $y$ are in the same multivector. This implies that $F_\mathcal{V}(E) \cap N \not\subset E$, as $y \in F_\mathcal{V}(x)$ and $y \not\in E$, a contradiction. Thus, we conclude that $F_\mathcal{V}(E') \cap N \subseteq E$.

Now, we show that $F_\mathcal{V}(P \setminus E') \subseteq N$. Assume there exists an $x \in P$ satisfying $F_\mathcal{V}(x) \setminus N \neq \emptyset$. Then since $(P,E)$ is an index pair in $N$, it follows that $x \in E$. To contradict, we assume $x \not\in E'$. Hence, $x \in V$. We let $y \in F_\mathcal{V}(x) \setminus N$. By definition of $F_\mathcal{V}$, either $y \in \cl(x)$ or $y$ and $x$ are in the same multivector. In the later case, $y \in N$ as it was assumed that $V \subseteq E$, a contradiction. Hence, $y \leq x$. But this implies that $y \in \cl(x) \subseteq E \subseteq N$, a contradiction. Ergo, $x \in E'$, and $F_\mathcal{V}(P \setminus E') \subseteq N$. 

Finally, we show that $\inv(P \setminus E') = \inv(P \setminus E)$. Trivially, $\inv(P \setminus E) \subseteq \inv(P \setminus E')$, so it is sufficient to show that $\inv(P \setminus E') \subseteq \inv(P \setminus E)$. For a contradiction, assume that there exists an $x \in \inv( P \setminus E' ), x \not\in \inv( P \setminus E)$. Thus, there exists an essential solution $\rho \; : \; \mathbb{Z} \to P \setminus E'$ where for some $k$, $y := \rho(k) \in V$. Since $V$ is regular, we assume without loss of generality that $z := \rho(k+1) \not\in V$. Hence, $z \in \cl(y)$. In addition, since $z \in \cl(V)$, we have that $z \in \cl(E) = E$. Therefore, $z \in E \setminus V = E'$. But $\rho$ is a solution in $P \setminus E'$, a contradiction. 
\end{proof}

\begin{figure}[htbp]
\centering
\begin{tabular}{ccc}
  \includegraphics[height=34mm]{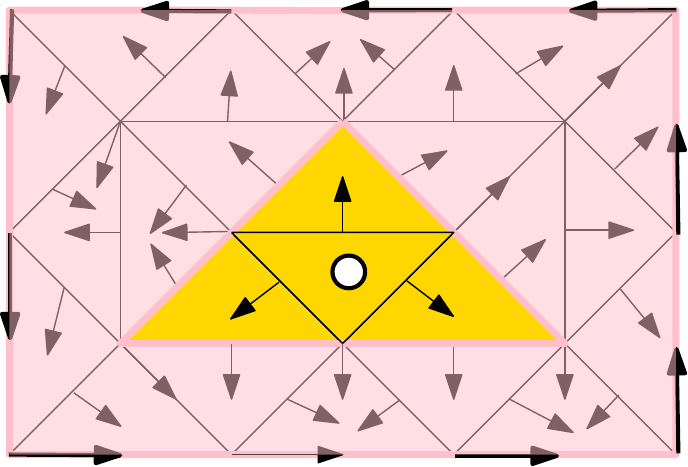}&
  \includegraphics[height=34mm]{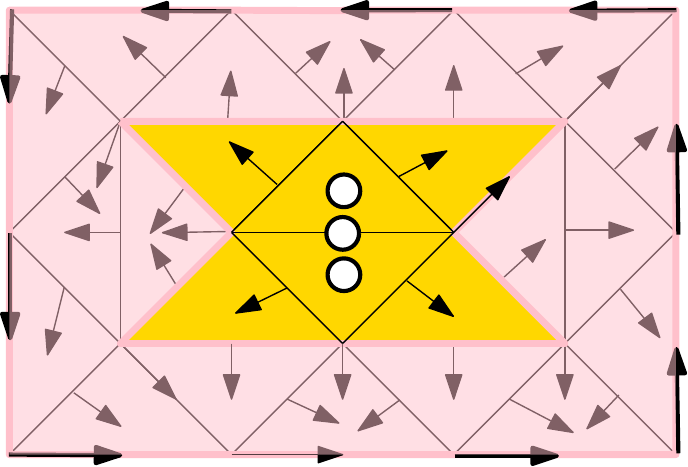}&
  \includegraphics[height=34mm]{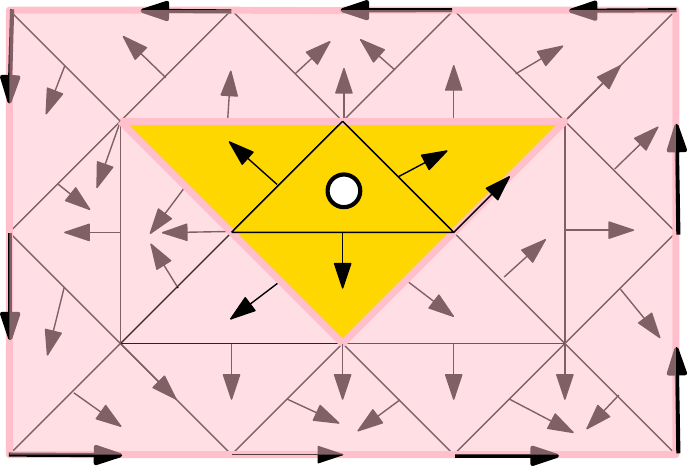}\\
  \multicolumn{3} {l} {
  \begin{tikzpicture}[outer sep = 0, inner sep = 0]
  \draw[fill=blue,outer sep=0, inner sep=0] (0.0,-0.3) -- (15.9,-0.3) -- (15.9,0.1) -- (0.0,0.1);
  \end{tikzpicture} } 
\end{tabular}
\caption{Enlarging $P\setminus E$ which is gold in all three pictures while $E$ is colored pink. (left) $\mathcal{V}_1$; (right) $\mathcal{V}_2$; (middle) $\mathcal{V}_1\overline{\cap}\mathcal{V}_2$. Note that there is one bar in the barcode, in contrast with Figure \ref{mocl-fig}}
\label{thickPE-fig}
\end{figure} 
Figure~\ref{thickPE-fig} illustrates how enlarging $P\setminus E$ by removing regular vectors as Proposition~\ref{enlarge-prop} suggests can help mitigate the effects of noise on computing Conley index persistence. Contrast this example with the example in Figure~\ref{mocl-fig}. Denoting  
$W_i=P_i\setminus E_i$ for $i=1,2$ and $W_{12}=(P_1\cap P_2)\setminus (E_1\cap E_2)$ in both figures, we see that
$W_1\cap W_{12}\cap W_2$ is empty in Figure~\ref{mocl-fig} while in Figure~\ref{thickPE-fig} it consists of three critical simplices each marked
with a circle. Hence, in Figure \ref{thickPE-fig} a single generator persists throughout the interval, unlike in Figure \ref{mocl-fig}.

\subsection{Computing a Noise-Resilient Index Pair}

We give a method for computing a noise-resilient index pair by using techniques demonstrated in the previous subsection. Note that by Theorem \ref{thm:pfip}, we have that $( \pf(\cl(S)), \pf(\mo(S)))$ is an index pair for invariant set $S$ in $N$. Hence, we adopt the strategy of taking $P = \pf(\cl(S))$ and $E = \pf(\mo(S))$, and we aim to find some collection $R \subseteq E$ so that $(P, E \setminus R)$ remains an index pair in $N$. Finding an appropriate $R$ is a difficult balancing act: one wants to find an $R$ so that $P \setminus ( E \setminus R )$ is sufficiently large, so as to capture perturbations in the isolated invariant set as described in the previous section, but not so large that $E$ is small and perturbations in $E$ are not captured. If $R$ is chosen to be as large as possible, then a small shift in $E$ may results in $(E \setminus R) \cap (E' \setminus R')$ having a different topology than $E$ or $E'$ leading to a ``breaking'' of barcodes analogous to the case described in the previous section. 

Before we give an algorithm for outputting such an $R$, we first define a $\delta$-collar. 
\begin{definition}
We define the {\em $\delta$-collar} of an invariant set $S \subseteq K$ recursively: 
\begin{enumerate}
    \item The $0$-collar of $S$ is $\cl(S)$.
    \item For $\delta > 0$, the $\delta$-collar of $S$ is the set of simplices $\sigma$ in the $\left(\delta - 1\right)$-collar of $S$ together with those simplices $\tau$ where $\tau$ is a face of some $\sigma$ with a face $\tau'$ in the $(\delta - 1)$-collar of $S$. 
\end{enumerate}
\end{definition}
For an isolated invariant set $S$, we will let $C_\delta(S)$ denote the $\delta$-collar of $S$. Together with Proposition \ref{enlarge-prop}, $\delta$-collars give a natural algorithm for finding an $R$ to enlarge $P \setminus E$. 

\begin{algorithm2e}
\SetAlgoLined
\KwIn{ Isolated invariant set $S$ with respect to $\mathcal{V}$ contained in some closed set $N$, Index pair $(P,E)$ in $N$ with respect to $\mathcal{V}$, $\delta \in \mathbb{Z}$} 
\KwOut{ List of simplices $R$ such that $(P, E \setminus R)$ is an index pair for $S$ in $N$. }
    
        $R \gets  {\tt new \; set()}$
        
        $vecSet \gets \{ \left[\sigma\right] \in \mathcal{V} \; | \; \left[\sigma\right] \subseteq E \cap C_\delta(S) \wedge \left[\sigma\right] \cap \partial(E) \neq \emptyset \wedge \left[\sigma\right] \cap \partial(P) = \emptyset \}$
    
        $vec \gets {\tt new \; queue}()$
    
        ${\tt appendAll}(vec, vecSet)$
        
        \While{  ${\tt size}( vec ) > 0 $  }
        {
            $\left[\sigma\right] \gets {\tt pop}(vec)$
        
            \If{${\tt isClosed}( \left(E \setminus R\right) \setminus \left[\sigma\right] ) \; {\tt and } \; \left[\sigma\right] \subseteq E \setminus R \; {\tt and } \; {\tt isRegular}\left( \left[\sigma\right] \right)$ }
            {
                $R \gets R \cup \left[\sigma\right]$
                
                $mouthVecs \gets \{ \left[\tau\right] \; | \; \tau \in \mo\left(\left[\sigma\right]\right) \; \wedge \left[\tau\right] \subseteq C_\delta(S) \} $
                
                ${\tt appendAll}(vec,mouthVecs)$
            }
        }

    \Return{R}

 \caption{${\tt findR}(S, P, E, \mathcal{V}, \delta)$}
 \label{alg:findR}
\end{algorithm2e}
In particular, we use Algorithm \ref{alg:findR} for this purpose. 
\begin{theorem}
Let $R$ be the output of Algorithm \ref{alg:findR} applied to index pair $(P,E)$ in $N$ for isolated invariant set $S$. The pair $(P, E \setminus R)$ is an index pair for $S$ in $N$.
\label{thm:algoworks}
\end{theorem}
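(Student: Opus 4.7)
The plan is to reduce the statement to iterated application of Proposition \ref{enlarge-prop}. At its heart, Algorithm \ref{alg:findR} maintains a running set $R$ and, at each step that passes the \texttt{if} check, enlarges it by a single regular multivector $[\sigma] \subseteq E \setminus R$ whose removal keeps the remainder closed. These are precisely the three hypotheses required by Proposition \ref{enlarge-prop}, so the algorithm is essentially choreographing a sequence of legal shrinkings of the exit set.

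Concretely, I would index the iterations of the \texttt{while} loop and let $R_k$ denote the value of $R$ after the $k$-th pop. I would prove by induction on $k$ that $(P, E \setminus R_k)$ is an index pair for $S$ in $N$ under $\mathcal{V}$. The base case $k=0$ is trivial since $R_0 = \emptyset$ and $(P,E)$ is an index pair in $N$ by hypothesis. For the inductive step, suppose $(P, E \setminus R_k)$ is an index pair in $N$. At iteration $k+1$, the algorithm pops some $[\sigma]$. If the guard fails, then $R_{k+1} = R_k$ and there is nothing to show. If the guard succeeds, then by construction $[\sigma]$ is regular, $[\sigma] \subseteq E \setminus R_k$, and $(E \setminus R_k) \setminus [\sigma]$ is closed. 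Applying Proposition \ref{enlarge-prop} to the index pair $(P, E \setminus R_k)$ with $V := [\sigma]$ yields that $(P, (E \setminus R_k) \setminus [\sigma]) = (P, E \setminus R_{k+1})$ is an index pair in $N$, completing the induction.

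Termination is immediate: each iteration that modifies $R$ strictly enlarges it, $R \subseteq E \subseteq K$, and $K$ is finite, so after finitely many iterations the queue empties. The final value $R$ is $R_k$ for the last $k$, so by the inductive claim $(P, E \setminus R)$ is an index pair for $S$ in $N$.

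The auxiliary pieces of the algorithm, the restriction to vectors meeting $\partial(E)$ and avoiding $\partial(P)$, containment in the $\delta$-collar $C_\delta(S)$, and the seeding of the queue with mouths of removed vectors, do not enter the correctness argument at all; they only control \emph{which} legal shrinkings are attempted and thus govern the size and shape of $R$, not whether $(P, E \setminus R)$ remains an index pair. The only potential obstacle is making sure the per-iteration hypotheses of Proposition \ref{enlarge-prop} are checked against the \emph{current} exit set $E \setminus R_k$ rather than the original $E$, which is exactly what the \texttt{if} check on line testing \texttt{isClosed}$((E \setminus R) \setminus [\sigma])$ and $[\sigma] \subseteq E \setminus R$ enforces; once this alignment is noted, the proof is a direct induction.
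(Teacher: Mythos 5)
Your proof is correct and follows essentially the same route as the paper: both reduce the theorem to iterated application of Proposition~\ref{enlarge-prop}, with the algorithm's guard verifying exactly the three hypotheses (regularity, containment in the current exit set, and closedness after removal) at each step. The paper phrases this as a minimal-counterexample argument (assume failure, take the first offending multivector $V$, derive a contradiction via Proposition~\ref{enlarge-prop}) while you phrase it as a direct forward induction on loop iterations; these are the same argument up to contraposition, and your observation that the $\delta$-collar and boundary conditions only influence which removals are attempted, not their legality, is a correct and useful clarification.
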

\begin{proof}
To contradict, we assume that the $R$ output by Algorithm \ref{alg:findR} results in $(P, E \setminus R)$ is not an index pair. We note by inspection of the algorithm that multivectors are removed sequentially, so there exists some first $V$ such that $(P, E \setminus R_V)$ is an index pair for $S$ in $N$ but $(P, E \setminus \left(R_V \cup V\right))$ is not an index pair for $S$ in $N$, where $R_V$ denotes the $R$ variable in Algorithm \ref{alg:findR} before $V$ is added to it. By inspection of the algorithm, we observe that since $V$ was added to $R_V$, it must be that $V$ is a regular vector, and $E \setminus \left(R \cup V\right)$ is closed, and $V \subseteq E \setminus R_V$. Proposition \ref{enlarge-prop} directly implies that $\left(P, \left(E\setminus R\right) \setminus V \right)$ is an index pair, which is a contradicton. Hence, there can exist no such $V$, so it follows that $(P, E \setminus R)$ must be an index pair for $S$ in $N$. 
\end{proof}

Hence, Algorithm \ref{alg:findR} provides a means by which the user may enlarge $P \setminus E$. As this algorithm is parameterized, a robust choice of $\delta$ may be application specific. We also include some demonstrations on the effectiveness of using this technique. A real instance of the difficulty can be seen in Figure \ref{fig:noise}, while the application of Algorithm \ref{alg:findR} with $\delta = 5$ to solve the problem is found in Figure \ref{fig:solvednoise}.
\begin{figure}[htbp]
\centering
\begin{tabular}{ccc}
  \includegraphics[height=39mm]{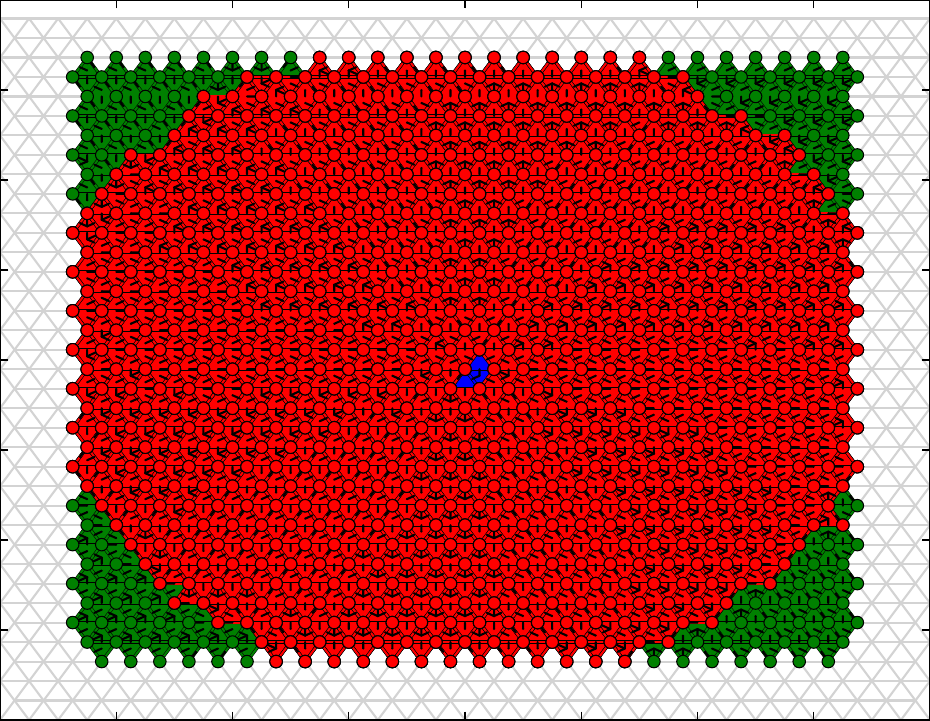}&
  \includegraphics[height=39mm]{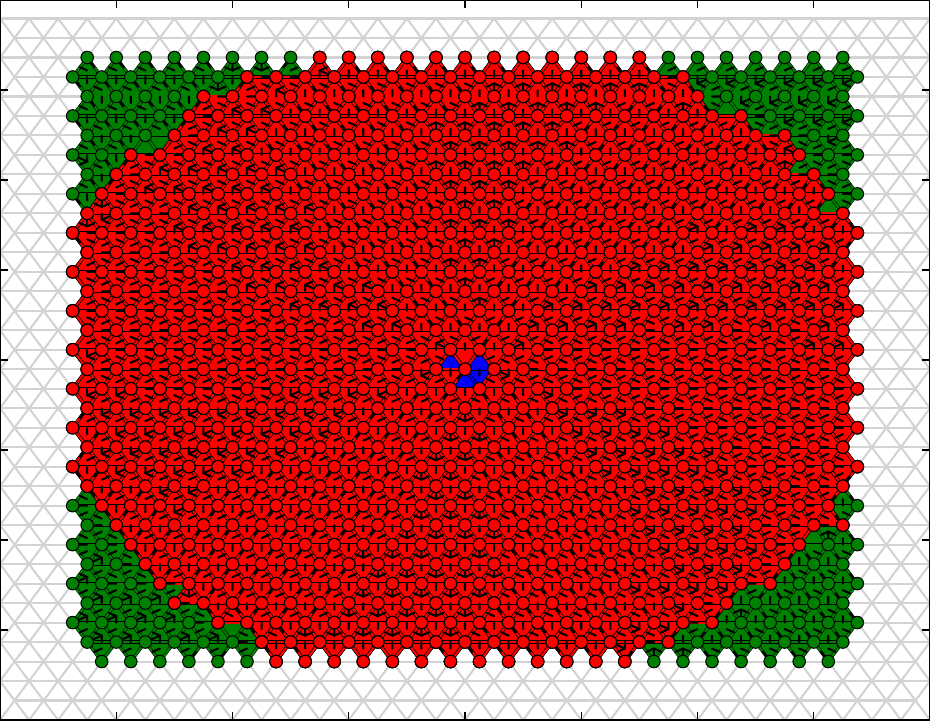}&
  \includegraphics[height=39mm]{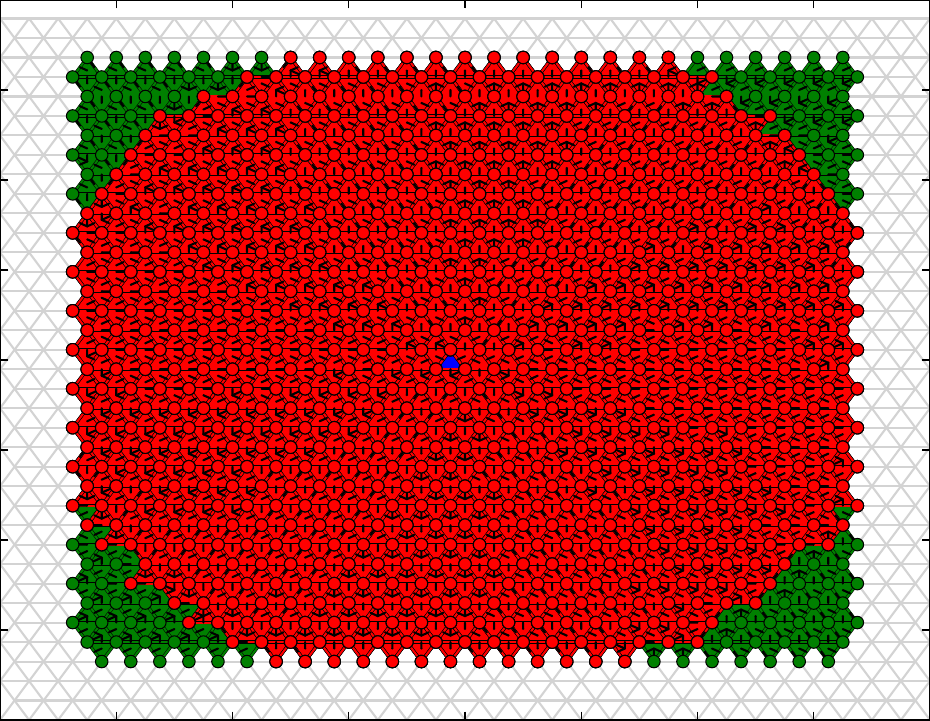}\\
  \multicolumn{3} {l} {
  \begin{tikzpicture}[outer sep = 0, inner sep = 0]
  \draw[fill=blue,outer sep=0, inner sep=0] (0.0,-0.3) -- (10.89,-0.3) -- (10.89,0.1) -- (0.0,0.1);
  \end{tikzpicture} } \\
  \multicolumn{3} {r} {
  \begin{tikzpicture}[outer sep = 0, inner sep = 0]
  \draw[fill=blue,outer sep=0, inner sep=0] (-10.89,-0.3) -- (0.0,-0.3) -- (0.0,0.1) -- (-10.89,0.1);
  \end{tikzpicture} }
\end{tabular}
\caption{Index pairs on two slightly perturbed multivector fields (left, right) and their intersection (middle). As before, the isolating neighborhood $N$ is in green, $E$ is in red, and $P \setminus E$ is in blue. Note that we have the same difficulty as in Figure \ref{mocl-fig}, where there are two homology generators in the intersection multivector field, so we get a broken bar code. }
\label{fig:noise}
\end{figure} 

\begin{figure}[htbp]
\centering
\begin{tabular}{ccc}
  \includegraphics[height=39mm]{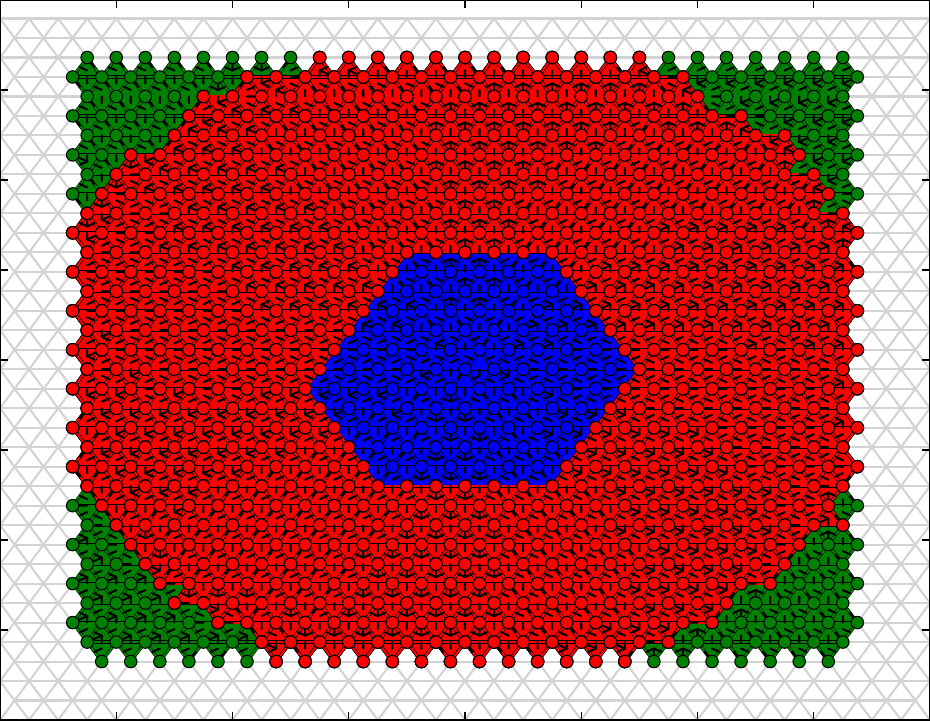}&
  \includegraphics[height=39mm]{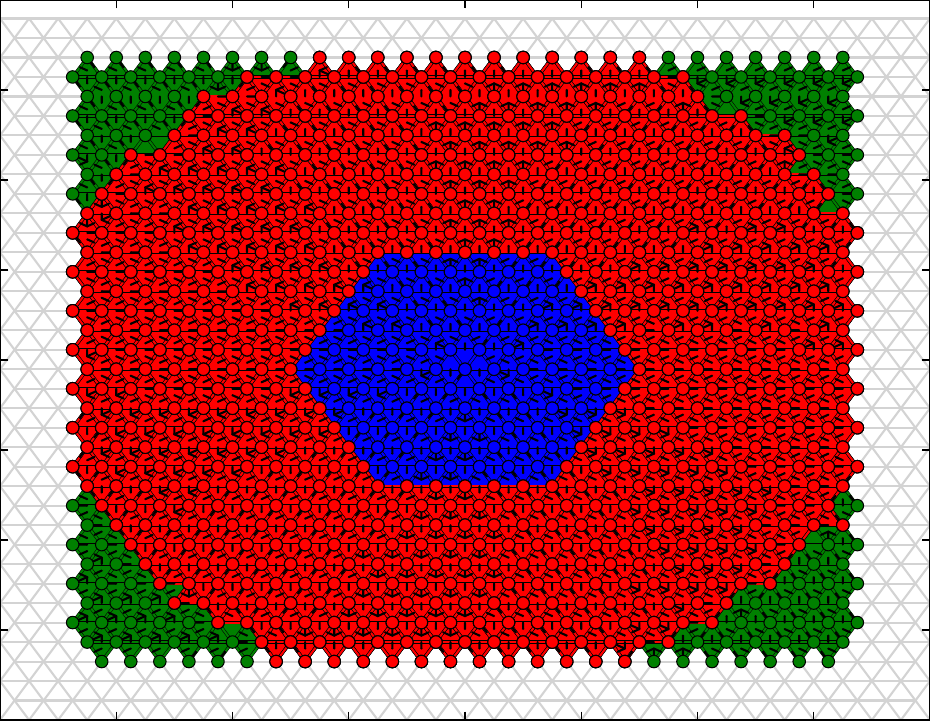}&
  \includegraphics[height=39mm]{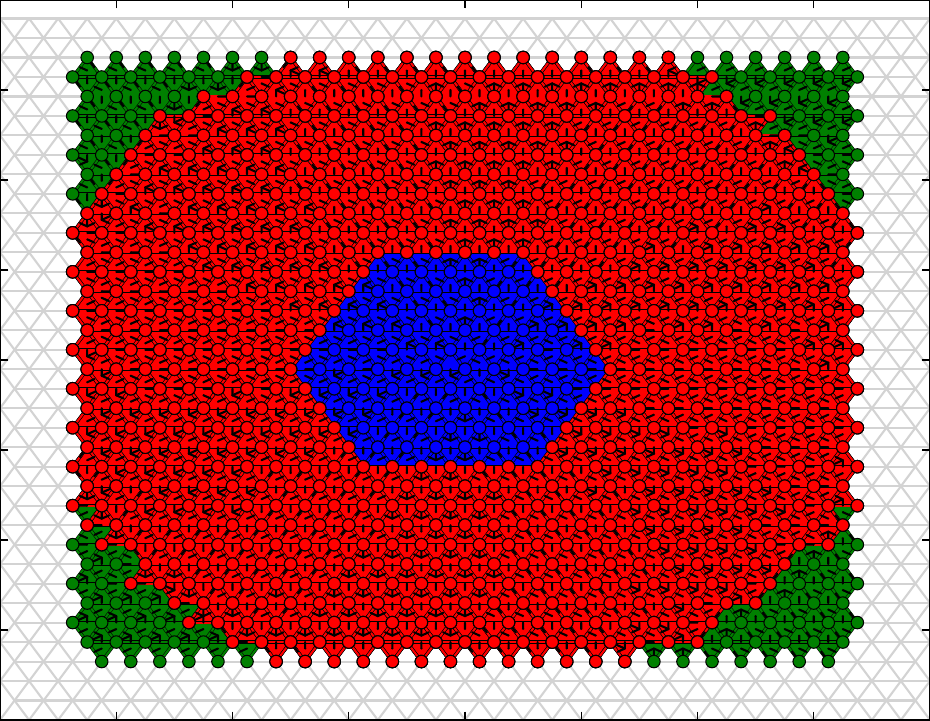}\\
  \multicolumn{3} {l} {
  \begin{tikzpicture}[outer sep = 0, inner sep = 0]
  \draw[fill=blue,outer sep=0, inner sep=0] (0.0,-0.3) -- (15.95,-0.3) -- (15.95,0.1) -- (0.0,0.1);
  \end{tikzpicture} } 
\end{tabular}
\caption{The same index pairs as in Figure \ref{fig:noise} with the same color scheme, but after applying Algorithm \ref{alg:findR} to reduce the size of $E$. This forces a $2$-dimensional homology generator to persist across both multivector fields (left, right) and their intersection (middle). }
\label{fig:solvednoise}
\end{figure} 

\cancel{Given an isolated invariant set $S$, we will let $S^+ := \pb(S)$. We then let $P := \pf(\cl(S^+))$ and $E:=\pf(\mo(S^+))$, and will show via a series of claims that $(P,E)$ is an index pair for $S$. Note that this aim differs from that of Theorem \todo{reference this} in that the later only considers the push forward applied to the closure and mouth of an isolated invariant set. We also include some demonstrations on the effectiveness of using this technique. 

\begin{proposition}
$S^+$ is open in $N$
\end{proposition}
\begin{proof}
Note that for all $y \in N$ such that an element of $x \in \cl(y)$, we have that $x \in F_\mathcal{V}(y)$ and hence $y \in S^+$. Thus, $S^+$ is upper with respect to $\leq_\tau$ which implies that $S^+$ is open. 
\end{proof}
\begin{proposition}
$\mo(S^+)$ is closed. 
\end{proposition}
\begin{proof}
Let $y \in \mo(S^+)$. Let $x \in \cl(y)$. If $x \not\in \mo(S^+)$, it follows that $x \in S^+$. Since $S^+$ is upper with respect to $\leq_\tau$, it follows that $y \in S^+$, which contradicts $y$ being in $\mo(S^+)$. Ergo, $x \in \mo(S^+)$. Thus, $\mo(S^+)$ is closed.
\end{proof}
\begin{proposition}
For all $A \subseteq N$, $\pf(A)$ is closed. 
\end{proposition}
\begin{proof}
Let $x \in \pf(A)$. Note that for all $y \in \cl(x)$, by definition of $F_\mathcal{V}$, it follows that $y \in F_\mathcal{V}(x)$. Hence, $y \in \pf(A)$, which implies that $\pf(A)$ is closed.
\end{proof}
The above results immediately give the following corrolary.
\begin{corollary}
The sets $P$ and $E$ are closed. 
\end{corollary}
Hence, we proceed to show that $P$ and $E$ meet the conditions necessary to be an index pair for $S$ in $N$. 
\begin{proposition}
For all $A \subseteq N$, $F_\mathcal{V}(\pf(A)) \cap N \subseteq \pf(A)$.
\label{lem:pinvariant}
\end{proposition}
\begin{proof}
If $x \in \pf(A)$ and $y \in F_\mathcal{V}(x)$, then since there is a path originating in $A$ and terminating at $x$, there is trivially also a path originating in $A$ and terminating at $y$. Hence, the proof follows. 
\end{proof}
Note that this Proposition implies the first two conditions of being an index pair in $N$. The third and fourth condition remain to be shown. 
\begin{proposition}
For every $A \subseteq N$, $A \subseteq \pf(A)$ and $A \subseteq \pb(A)$.
\label{lem:five}
\end{proposition}
\begin{proof}
Note that there is trivially a path $\rho \; : \; \mathbb{Z}_{[a,b]} \to N$ from $x \in A$ to $x \in A$ by letting $\rho(i) = x$ for all $i \in \mathbb{Z}_{[a,b]}$. Hence, $A \subseteq \pf(A)$ and $A \subseteq \pb(A)$. 
\end{proof}
\begin{proposition}
$F_\mathcal{V}( P \setminus E ) \subseteq N$
\end{proposition}
\begin{proof}
Choose $x \in N$ such that there exists a $y \in F_\mathcal{V}(x) \setminus N$. We let $\rho\; : \; \mathbb{Z}_{[a,b]} \to N$ denote a path from $u \in P$ to $x$. If $u \in \mo(S^+)$, then it follows that $x \in E$. Hence, we assume that $u \in S^+$. 

We consider two cases. First, assume that $\rho(\mathbb_{[a,b]} ) \cap E \neq \emptyset$. Then because of Proposition \ref{lem:pinvariant}, it follows that $x \in E$. 

Second, assume that $\rho(\mathbb{Z}_{[a,b]} ) \cap E = \emptyset$. We claim that in this case, $\rho(\mathbb_{[a,b]} ) \subseteq S^+$. If not, we find $v, w \in \rho(\mathbb{Z}_{[a,b]})$ such that $v \in S^+$ and $w \not\in S^+$ and $w \in F_\mathcal{V}(v)$. Thus, either $w$ and $v$ are in the same multivector, which implies $w \in S^+$, an immediate contradiction, or $w \in \cl(S^+)$, which gives $w \in \mo(S^+) \subseteq E$, by Proposition \ref{lem:five}, a contradiction. 

Hence, if $F_{v}(x) \setminus N \neq \emptyset$, then $x \in E$. 
\end{proof}
All that remains to be shown is that $S = \inv( P \setminus E )$.
\begin{proposition}
$S \cap E = \emptyset$
\label{lem:six}
\end{proposition}
\begin{proof}
Aiming for a contradiction, assume that there exists an $x \in S \cap E$. We let $\gamma$ denote a path from $u \in \mo(S^+)$ to $x$. Since $x \in S$, it follows that $u \in S^+$ which contradicts $u \in \mo(S)$. 
\end{proof}
\begin{corollary}
$S \subseteq P \setminus E$.
\label{cor:contained}
\end{corollary}
\begin{proof}
Follows immediately from Proposition \ref{lem:five} and \ref{lem:six}. 
\end{proof}
\begin{proposition}
If $A \subseteq B$, then $\inv(A) \subseteq \inv(B)$. 
\label{lem:seven}
\end{proposition}
\begin{proof}
It is trivial to see that every essential solution in $A$ is also present in $B$, so the Proposition follows immediately. 
\end{proof}
\begin{proposition}
$\inv(P \setminus E) = S$
\end{proposition}
\begin{proof}
By Proposition  \ref{lem:seven} and Corollary \ref{cor:contained}, $S = \inv(S) \subseteq \inv(P \setminus E)$. By Proposition \ref{lem:seven}, we have that $\inv(P \setminus E) \subseteq \inv(P) \subseteq \inv(N) = S$. Hence, $\inv(P \setminus E) = S$. 
\end{proof}
\begin{corollary}
$(P,E)$ is an index pair in $N$ for $S = \inv(N)$. 
\label{cor:main}
\end{corollary}

Corollary \ref{cor:main} gives a canonical way of selecting a large index pair. By iteratively removing regular multivectors as described in the previous subsection, one can enlarge $P \setminus E$ while still maintaining a large $E$ in order to mitigate noise. \tamal{We obtain an immediate algorithm for computing the persistence of the Conley index of the invariant set $\inv(N)$ for a chosen window $N$. After choosing the enlarged index pairs $(P_i,E_i)$ over a given sequence of vector fields $\mathcal{V}_i$, we compute the barcode for the zigzag persistence module given by sequence~(\ref{zzpers})} } 
\section{Tracking Invariant Sets}
\label{set:track}
In the previous section, we established the persistence of the Conley index of invariant sets in consecutive multivector fields which are isolated by a single isolating neighborhood. In this section, we develop an algorithm to ``track'' an invariant set over a sequence of
isolating neighborhoods. A classic example is a hurricane, where if one were to sample wind velocity at times $t_0, t_1, \ldots, t_n$, there may be no fixed $N$ which captures the eye of the hurricane at all $t_i$ without also capturing additional, undesired invariant sets at some $t_j$. 

\subsection{Changing the Isolating Neighborhood}

Thus far, we have defined a notion of persistence of the Conley Index for some fixed isolating neighborhood $N$ and simplicial complex $K$. This setting is very inflexible ---one may want to incorporate domain knowledge to change $N$ so as to capture changing features of a sequence of sampled dynamics. We now extend the results in Section \ref{sec:persist} to a setting where $N$ need not be fixed. Throughout this section, we consider multivector fields $\mathcal{V}_1, \mathcal{V}_2, \ldots, \mathcal{V}_n$ with corresponding isolated invariant sets $S_1, S_2, \ldots, S_n$. In addition, we assume that there exist isolating neighborhoods $N_1, N_2, \ldots, N_{n-1}$ where $N_i$ isolates both $S_i$ and $S_{i+1}$. We will also require that if $1 < i < n$, the invariant set $S_i$ is isolated by $N_i \cup N_{i+1}$.
Note that for each $i$ where $1 < i < n$, there exist two index pairs for $S_i$: one index pair $(P_i^{(i-1)},E_i^{(i-1)})$ in $N_{i-1}$ and another index pair $(P_i^{(i)},E_i^{(i)})$ in $N_i$. In the case of $i=1$, there is only one index pair $(P_1^{(1)},E_1^{(1)})$ for $S_i$. Likewise, in the case of $i = n$, there is a single index pair $(P_n^{(n-1)},E_n^{(n-1)})$. 

By applying the techniques of Section \ref{sec:persist}, we obtain a sequence of persistence modules:
\begin{equation}\adjustbox{scale = .8}{
    \begin{tikzcd}
        \centering
        H_p\left(P_1^{(1)},E_1^{(1)}\right)
        & H_p\left(P_1^{(1)} \cap P_2^{(1)}, E_1^{(1)} \cap E_2^{(1)}\right) \arrow[l] \arrow[r]
        & H_p\left(P_2^{(1)},E_2^{(1)}\right)\\
        H_p\left(P_2^{(2)},E_2^{(2)}\right)
        & H_p\left(P_2^{(2)} \cap P_3^{(2)}, E_2^{(2)} \cap E_3^{(2)}\right) \arrow[l] \arrow[r]
        & H_p\left(P_3^{(2)},E_3^{(2)}\right)\\
        H_p\left(P_3^{(3)},E_3^{(3)}\right)
        & H_p\left(P_3^{(3)} \cap P_4^{(3)}, E_3^{(3)} \cap E_4^{(3)}\right) \arrow[l] \arrow[r]
        & H_p\left(P_4^{(3)},E_4^{(3)}\right)\\
        & \vdots & \\
        H_p\left(P_{n-1}^{(n-1)},E_{n-1}^{(n-1)}\right)
        & H_p\left(P_{n-1}^{(n-1)} \cap P_n^{(n-1)}, E_{n-1}^{(n-1)} \cap E_n^{(n-1)}\right) \arrow[l] \arrow[r]
        & H_p\left(P_n^{(n-1)},E_n^{(n-1)}\right).
    \end{tikzcd}
    \label{eq:bigMod}
}\end{equation}
In the remainder of this subsection, we develop the theory necessary to combine these modules into a single module. Without any loss of generality, we will combine the first modules into a single module, which will imply a method to combine all of the modules into one. 

First, we note that by Theorem \ref{thm:ipiso}, we have that $H_p(P_2^{(1)},E_2^{(1)}) \cong H_p(P_2^{(2)},E_2^{(2)})$. To combine the persistence modules
\begin{equation}
    \begin{tikzcd}
        \centering
        H_p(P_1^{(1)},E_1^{(1)})
        & H_p(P_1^{(1)} \cap P_2^{(1)}, E_1^{(1)} \cap E_2^{(1)}) \arrow[l] \arrow[r]
        & H_p(P_2^{(1)},E_2^{(1)})\\
        H_p(P_2^{(2)},E_2^{(2)})
        & H_p(P_2^{(2)} \cap P_3^{(2)}, E_2^{(2)} \cap E_3^{(2)}) \arrow[l] \arrow[r]
        & H_p(P_3^{(2)},E_3^{(2)}).
    \end{tikzcd}
    \label{eq:smallMod}
\end{equation}
into a single module, it is either necessary to explicitly find a simplicial map which induces an isomorphism $\phi \; : \; H_p(P_2^{(1)},E_2^{(1)}) \to H_p(P_2^{(2)},E_2^{(2)})$, or to construct some other index pair for $S_2$ denoted $(P,E)$ such that $P_2^{(1)},P_2^{(2)} \subset P$ and $E_2^{(1)},E_2^{(2)} \subset E$. This would allow substituting both occurrences of $(P_2^{(1)},E_2^{(1)})$ or $(P_2^{(2)},E_2^{(2)})$ for $(P,E)$, and allow the combining of all the modules in Equation \ref{eq:bigMod} into a single module. Since constructing the isomorphism given by Theorem \ref{thm:ipiso} is fairly complicated, we opt for the second approach. First, we define a special type of index pair that is sufficient for our approach. 

\begin{definition}[Strong Index Pair]
Let $(P,E)$ be an index pair for $S$ under $\mathcal{V}$. The index pair $(P,E)$ is a {\em strong index pair} for $S$ if for each $\tau \in E$, there exists a $\sigma \in S$ such that there is a path $\rho \; : \; \mathbb{Z}_{[a,b]} \to P$ where $\rho(a) = \sigma$ and $\rho(b) = \tau$. 
\end{definition}
Intuitively, a strong index pair $(P,E)$ for $S$ is an index pair for $S$ where each simplex $\tau \in E$ is reachable from a path originating in $S$. Strong index pairs have the following useful property.
\begin{theorem}
Let $S$ denote an invariant set isolated by $N$, $N'$, and $N \cup N'$ under $\mathcal{V}$. If $(P,E)$ and $(P',E')$ are strong index pairs for $S$ in $N$, $N'$ under $\mathcal{V}$, then the pair \[(\pf_{N \cup N'}\left(P \cup P'\right), \pf_{N \cup N'}\left(E \cup E'\right) )\] is a strong index pair for $S$ in $N \cup N'$ under $\mathcal{V}$. 
\label{thm:pfsip}
\end{theorem}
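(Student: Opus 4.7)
The plan is to verify the four index-pair axioms of Definition~\ref{def:modindpair} for the candidate pair in $N \cup N'$, and then verify the strong condition by concatenating paths. Closedness of the two push forwards is automatic since $P \cup P'$ and $E \cup E'$ both sit inside the closed ambient set $N \cup N'$. Conditions one and two are immediate by appending one step to any defining path: given $\sigma$ in either push forward and $y \in F_\mathcal{V}(\sigma) \cap (N \cup N')$, extend the path realizing $\sigma$ by $y$, which stays in $N \cup N'$.

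Write $A = P \setminus E$, $A' = P' \setminus E'$, and $T = \pf_{N \cup N'}(P \cup P') \setminus \pf_{N \cup N'}(E \cup E')$. The key intermediate claim is $T \subseteq A \cup A'$. Given $\sigma \in T$ with a defining path $\rho$ starting in $P$, its initial simplex cannot lie in $E$ (otherwise $\sigma \in \pf_{N \cup N'}(E)$, a contradiction), so $\rho(a) \in A$. A forward induction then keeps $\rho$ inside $A$: condition three of $(P,E)$ forces $\rho(i+1) \in N$, condition one forces $\rho(i+1) \in P$, and the assumption $\sigma \notin \pf_{N \cup N'}(E \cup E')$ blocks $\rho(i+1) \in E$. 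Symmetrically, a path starting in $P'$ keeps $\sigma$ in $A'$. Condition three of the candidate pair now follows since $F_\mathcal{V}(A) \subseteq N$ and $F_\mathcal{V}(A') \subseteq N'$. For condition four, the inclusion $S \subseteq T$ uses strong-ness together with isolation by $N \cup N'$: any hypothetical $\tau \in S \cap \pf_{N \cup N'}(E \cup E')$ would, after invoking strong-ness of $(P,E)$ or $(P',E')$ to push back to $S$, yield a path in $N \cup N'$ with both endpoints in $S$ but passing through $E \cup E'$, contradicting isolation. For the reverse inclusion, take an essential solution $\rho \; : \; \mathbb{Z} \to T$ and set $I = \{i : \rho(i) \in A\}$, $I' = \{i : \rho(i) \in A'\}$. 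The forward-extension claim shows $I$ and $I'$ are both upward-closed, and $T \subseteq A \cup A'$ gives $I \cup I' = \mathbb{Z}$; but two proper upward-closed subsets of $\mathbb{Z}$ cannot cover $\mathbb{Z}$, so at least one of $I,I'$ equals $\mathbb{Z}$, placing $\rho(0)$ in $\inv(A) = S$ or $\inv(A') = S$.

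For the strong property of the new pair, take $\tau \in \pf_{N \cup N'}(E \cup E')$ with defining path ending at $\tau$ and starting at some simplex in, say, $E$. Apply strong-ness of $(P,E)$ to obtain a path in $P$ from some $\sigma \in S$ to that starting simplex, then concatenate the two paths. The first piece sits in $P \subseteq \pf_{N \cup N'}(P \cup P')$, while each vertex of the second piece is witnessed as a push-forward of the shared endpoint in $E \subseteq P \cup P'$ along a path in $N \cup N'$, so both pieces lie in $\pf_{N \cup N'}(P \cup P')$ as required. The main obstacle is the forward-extension argument in the intermediate claim, which crucially interleaves all three axioms of the original index pairs, together with the observation that two upward-closed subsets of $\mathbb{Z}$ cover $\mathbb{Z}$ only if one of them equals $\mathbb{Z}$; these two ingredients are exactly what allow two local index pairs to be promoted to a single strong index pair in $N \cup N'$.
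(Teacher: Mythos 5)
Your proof is correct and follows essentially the same approach as the paper's: the forward-extension induction keeping paths inside $P\setminus E$ (resp.\ $P'\setminus E'$), the use of strong-ness together with isolation by $N\cup N'$ to show $S\cap\pf_{N\cup N'}(E\cup E')=\emptyset$, the two-case argument that an essential solution must lie entirely in $P\setminus E$ or entirely in $P'\setminus E'$, and the path-concatenation argument for strong-ness all mirror the paper. Your packaging via the intermediate claim $T\subseteq A\cup A'$ and the observation about upward-closed subsets of $\mathbb{Z}$ is a slightly cleaner organization of the same underlying steps rather than a different route.
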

\begin{proof}
We proceed by showing that the pair meets the requirements to be a strong index pair in $N \cup N'$. First, note that for all $\sigma \in \pf(P \cup P')$, if $\tau \in F_\mathcal{V}(\sigma) \cap \left(N \cup N'\right)$, then it follows by the definition of the push forward that $\tau \in \pf(P \cup P')$. Hence, $F_\mathcal{V}\left( \pf\left(P \cup P'\right) \right) \cap \left( N \cup N' \right) \subset \pf\left(P \cup P'\right)$. Analogous reasoning shows that $F_\mathcal{V}\left( \pf\left( E \cup E' \right) \right) \cap \left(N \cup N'\right) \subset \pf\left( E \cup E' \right)$. 

Hence, we proceed to show that $F_\mathcal{V}\left( \pf\left( P \cup P' \right) \setminus \pf\left( E \cup E' \right) \right) \subset N \cup N'$.
To contradict, assume that there exists a $\sigma \in \pf\left( P \cup P' \right) \setminus \pf\left( E \cup E' \right)$ so that there is a $\tau \in F_\mathcal{V}\left(\sigma\right)$ where $\tau \not\in N \cup N'$. Since $\sigma \in \pf\left(P \cup P'\right)$, there must exist a $x \in P \cup P'$ such that there is a path $\rho \; : \; \mathbb{Z}_{[a,b]} \to N \cup N'$ where $\rho(a) = x$ and $\rho(b) = \sigma$. Without loss of generality, we assume that $x \in P$. Note that if $\sigma \in P$,  
then $\sigma\in P\setminus E$ by our assumption and hence $F_\mathcal{V}(\sigma)\subseteq N\subseteq N\cup N'$ contradicting our assumption that $F_\mathcal{V}(\sigma)\ni\tau\not\in (N\cup N')$.
 Hence, $\sigma \not\in P$. Note that since $(P,E)$ is an index pair, we have that $F_\mathcal{V}\left(P \setminus E \right) \subset P$. Hence, there must be some $i$ such that $\rho(i) \in E$. Hence, $\sigma \in \pf(E \cup E')$, a contradiction. Thus, no such $\sigma$ can exist. 

Now, we show that $S = \inv\left( \pf\left( P \cup P' \right) \setminus \pf \left( E \cup E' \right) \right)$. Note that since $S$ is isolated by $N \cup N'$ and every $\sigma \in E \cup E'$ is reachable by a path originating at $\tau \in S$, it follows that $\pf\left(E \cup E'\right) \cap S = \emptyset$, else $S$ is not isolated by $N \cup N'$. Hence, this implies that $S \subset \inv\left( \pf\left( P \cup P' \right) \setminus \pf\left( E \cup E'\right) \right)$. 

To show that $\inv\left( \pf\left( P \cup P' \right) \setminus \pf\left( E \cup E'\right) \right) \subset S$, we first note that if $x \in \pf\left( P \cup P' \right)$ and $x \not \in P \cup P'$, then it follows that $x \in \pf(E \cup E')$. This is because if there exists a $\sigma \in P$ such that there is a path $\rho \; : \; \mathbb{Z}_{[a,b]} \to N \cup N'$ which satisfies $\rho(a) = \sigma$ and $\rho(b) = x$, there must be some $\rho(i) \in E$. If there is no such $\rho(i)$, then this contradicts requirement $2$ of Definition \ref{def:indpair}, which states that $F_\mathcal{V}\left(P \setminus E\right) \subset P$. Hence, by the definition of the push forward, it follows that $x \in \pf\left(E \cup E'\right)$. Thus, it follows that $\pf\left( P \cup P' \right) \setminus \pf\left( E \cup E' \right) \subset \left( P \cup P' \right) \setminus \left( E \cup E' \right)$. Thus, every essential solution in $\pf\left( P \cup P' \right) \setminus \pf\left( E \cup E' \right)$ is also an essential solution in $\left(P \cup P\right) \setminus \left( E \cup E' \right)$. It remains to be shown that every essential solution in $\left( P \cup P' \right) \setminus \left( E \cup E' \right)$ is also an essential solution in $P \setminus E$. 

For a contradiction, assume that there exists an essential solution $\rho \; : \; \mathbb{Z} \to \left( P \cup P' \right) \setminus \left( E \cup E' \right)$ such that there exists an $i$ where $\rho(i) \not \in P \setminus E$. Note that since $\rho(i) \in \left(P \cup P'\right) \setminus \left( E \cup E'\right)$, it follows that $\rho(i) \not\in E$. Together, these two facts imply that $\rho(i) \not\in P$. Hence, it follows that $\rho(i) \in P' \setminus E'$. We claim in particular that for all $j \in \mathbb{Z}$, $\rho(j) \in P' \setminus E'$. Note that if there exists a $j < i$ such that $\rho(j) \not\in P' \setminus E'$, then it follows that $\rho(j) \in P \setminus E$. Since $(P,E)$ is an index pair, we have that $F_\mathcal{V}\left(P \setminus E\right) \subseteq P$. Thus, there must exist some $k$ where $j < k < i$ where $\rho(k) \in E$, but this contradicts that $\rho\left( \mathbb{Z} \right) \subseteq \left(P \cup P'\right) \setminus \left( E \cup E' \right)$. Hence, no such $j$ exists. Analogous reasoning shows that there is no $j > i$ where $\rho(j) \not\in P' \setminus E'$. It follows that $\rho(\mathbb{Z}) \subseteq \inv\left(P' \setminus E'\right)$. But this implies that $\rho\left( \mathbb{Z} \right) \subseteq S$. Note that $(P,E)$ and $(P',E')$ are both index pairs for $S$, so it follows that $\rho\left( \mathbb{Z} \right) \subseteq P \setminus E, P'\setminus E'$. This contradicts our assumption that $\rho(i) \not\in P \setminus E$. Thus, there is no such $\rho$. Hence, we have that $\inv\left(\left( P \cup P' \right) \setminus \left( E \cup E' \right)\right) \subseteq \inv\left( P \setminus E\right), \inv\left( P' \setminus E' \right) = S$, which implies that $\inv\left( \pf\left( P \cup P' \right) \setminus \pf\left( E \cup E' \right) \right) \subseteq S$.

To see that $ \left( \pf\left( P \cup P' \right), \pf\left(E \cup E'\right) \right)$ is a strong index pair, note that there must exist a path from $S$ to every $\sigma \in E \cup E'$, and since $\pf\left( E \cup E' \right)$ is the set of simplices $\sigma$ for which there exists a path from $E \cup E'$ to $\sigma$, it follows easily from path surgery that there exists a path from $S$ to $\sigma$. Hence, $(\pf\left(P \cup P'\right), \pf\left(E \cup E'\right) )$ is a strong index pair for $S$ in $N$. 
\end{proof}

Crucially, this theorem gives a persistence module
\begin{equation}
    \begin{tikzcd}
        \centering
        H_p(P_2^{(1)},E_2^{(1)}) \arrow[r] & H_p\left( \pf\left( P_2^{(1)} \cup P_2^{(2)} \right), \pf\left( E_2^{(1)} \cup E_2^{(2)}  \right) \right) & H_p(P_2^{(2)},E_2^{(2)}) \arrow[l]
    \end{tikzcd}
    \label{eq:connectingMod}
\end{equation}
where the arrows are given by the inclusion. Note that since these are all index pairs for the same $S$, it follows that we have $H_p\left(P_2^{(1)},E_2^{(1)}\right) \cong H_p\left( \pf\left( P_2^{(1)} \cup P_2^{(2)} \right), \pf\left( E_2^{(1)} \cup E_2^{(2)}  \right) \right) \cong H_p\left(P_2^{(2)},E_2^{(2)}\right)$. Hence, we will substitute $H_p\left( \pf\left( P_2^{(1)} \cup P_2^{(2)} \right), \pf\left( E_2^{(1)} \cup E_2^{(2)}  \right) \right)$ into the persistence module. By using the modules in Equation \ref{eq:bigMod}, we get a new sequnece of persistence modules
\begin{equation}\adjustbox{scale = .6}{
    \begin{tikzcd}
        \centering
        H_p\left(P_1^{(1)},E_1^{(1)}\right)
        & H_p\left(P_1^{(1)} \cap P_2^{(1)}, E_1^{(1)} \cap E_2^{(1)}\right) \arrow[l] \arrow[r]
        & H_p\left( \pf\left( P_2^{(1)} \cup P_2^{(2)} \right),\pf\left( E_2^{(1)} \cup E_2^{(2)} \right) \right)\\
        H_p\left( \pf\left( P_2^{(1)} \cup P_2^{(2)} \right),\pf\left( E_2^{(1)} \cup E_2^{(2)} \right) \right)
        & H_p\left(P_2^{(2)} \cap P_3^{(2)}, E_2^{(2)} \cap E_3^{(2)}\right) \arrow[l] \arrow[r]
        & H_p\left( \pf\left( P_3^{(2)} \cup P_3^{(3)} \right),\pf\left( E_3^{(2)} \cup E_3^{(3)} \right) \right)\\
        H_p\left( \pf\left( P_3^{(2)} \cup P_3^{(3)} \right),\pf\left( E_3^{(2)} \cup E_3^{(3)} \right) \right)
        & H_p\left(P_3^{(3)} \cap P_4^{(3)}, E_3^{(3)} \cap E_4^{(3)}\right) \arrow[l] \arrow[r]
        & H_p\left( \pf\left( P_4^{(3)} \cup P_4^{(4)} \right),\pf\left( E_4^{(3)} \cup E_4^{(4)} \right) \right)\\
        & \vdots & \\
        H_p\left( \pf\left( P_{n-1}^{(n-2)} \cup P_{n-1}^{(n-1)} \right),\pf\left( E_{n-1}^{(n-2)} \cup E_{n-1}^{(n-1)} \right) \right)
        & H_p\left(P_{n-1}^{(n-1)} \cap P_n^{(n-1)}, E_{n-1}^{(n-1)} \cap E_n^{(n-1)}\right) \arrow[l] \arrow[r]
        & H_p\left(P_n^{(n-1)},E_n^{(n-1)}\right).
    \end{tikzcd}
    \label{eq:subs}
}\end{equation}
which can immediately be combined into a single persistence module.

This approach is not without it's disadvantages, however. Namely, if $(P,E)$ and $(P',E')$ are index pairs for $S$ in $N$ and $N'$, it requires that $(P,E)$ and $(P',E')$ are strong index pairs and that $S$ is isolated by $N \cup N'$. Fortunately, the push forward approach to computing an index pair in $N$ gives a strong index pair.
\begin{theorem}
Let $S$ be an isolated invariant set where $N$ is an isolating neighborhood for $S$. The pair $\left(\pf\left( \cl\left(S\right) \right), \pf\left( \mo\left( S \right) \right) \right)$ is a strong index pair in $N$ for $S$. 
\end{theorem}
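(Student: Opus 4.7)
The plan is to observe that the first part of the claim, namely that $(\pf(\cl(S)), \pf(\mo(S)))$ is an index pair in $N$ for $S$, is already the content of Theorem~\ref{thm:pfip}. So the only real work is to verify the additional ``strong'' property: for every $\tau \in \pf(\mo(S))$, exhibit a $\sigma \in S$ together with a path from $\sigma$ to $\tau$ that stays inside $P := \pf(\cl(S))$.

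First I would handle the base case $\tau \in \mo(S)$. Since $\mo(S) = \cl(S) \setminus S$ by Proposition~\ref{prop:clomo}, there exists some $\sigma \in S$ with $\tau \leq \sigma$, so $\tau \in \cl(\sigma) \subseteq F_{\mathcal{V}}(\sigma)$. This yields a length-one path $\sigma \to \tau$ lying entirely in $\cl(S) \subseteq P$, as required.

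Next I would extend to a general $\tau \in \pf(\mo(S))$. By definition of the push forward, there is a $\tau' \in \mo(S)$ and a path $\rho : \mathbb{Z}_{[a,b]} \to N$ with $\rho(a) = \tau'$ and $\rho(b) = \tau$. Applying the previous paragraph to $\tau'$ gives a $\sigma \in S$ and a length-one path $\sigma \to \tau'$ in $P$. Concatenating produces a path from $\sigma$ to $\tau$ in $N$ that starts in $\sigma \in S \subseteq \cl(S) \subseteq P$.

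The only subtle point — and the main obstacle to watch for — is ensuring the concatenated path remains in $P$, not merely in $N$. This is exactly condition~1 of Definition~\ref{def:modindpair} applied to the index pair $(\pf(\cl(S)), \pf(\mo(S)))$ guaranteed by Theorem~\ref{thm:pfip}: $F_{\mathcal{V}}(\pf(\cl(S))) \cap N \subseteq \pf(\cl(S))$. An easy induction on the length of $\rho$ then shows that once the path enters $P$ and remains in $N$, it cannot leave $P$. This verifies the strong index pair condition and completes the proof.
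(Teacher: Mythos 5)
Your proof is correct and follows essentially the same route as the paper: invoke Theorem~\ref{thm:pfip} to reduce to verifying the ``strong'' condition, then trace a path from $S$ through $\mo(S)$ to any element of $\pf(\mo(S))$. You are a bit more careful than the paper's own argument in explicitly checking that the path stays inside $P$ (though this also follows immediately from the definition of the push forward, since the path from $\tau'\in\cl(S)$ already witnesses membership of each $\rho(i)$ in $\pf(\cl(S))$), and you correctly cite Theorem~\ref{thm:pfip} where the paper's proof appears to have a typo referencing Theorem~\ref{thm:pfsip}.
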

\begin{proof}
We note that by Theorem \ref{thm:pfsip}, the pair $\left(\pf\left( \cl\left(S\right) \right), \pf\left( \mo\left( S \right) \right) \right)$ is an index pair for $S$ in $N$. Hence, it is sufficient to show that the index pair is strong. Note that by definition, for all $\sigma \in \mo(S)$, there exists a $\tau \in S$ such that $\sigma$ is a face of $\tau$. Hence, $\sigma \in F_\mathcal{V}(\tau)$. Note that $\pf(\mo(S))$ is precisely the set of simplices $\sigma'$ for which there exists a path originating in $\mo(S)$ and terminating at $\sigma'$, so it is immediate that there is a path originating in $S$ and terminating at $\sigma$. Hence, the pair $\left(\pf\left( \cl\left(S\right) \right), \pf\left( \mo\left( S \right) \right) \right)$ is a strong index pair. 
\end{proof}
Our enlarging scheme given in Algorithm \ref{alg:findR} does not affect the strongness of an index pair. 
\begin{theorem}
Let $R$ be the output of applying Algorithm \ref{alg:findR} to the strong index pair $\left(P,E\right)$ in $N$ for $S$ with some parameter $\delta$. The pair $\left(P, E\setminus R\right)$ is a strong index pair for $S$ in $N$. 
\end{theorem}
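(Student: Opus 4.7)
The plan is to observe that this statement is essentially immediate from Theorem \ref{thm:algoworks} together with the fact that Algorithm \ref{alg:findR} never modifies $P$. The work to verify the index-pair axioms has already been done in Theorem \ref{thm:algoworks}, so all that remains is to check that shrinking $E$ to $E \setminus R$ cannot destroy the strong property.

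First, I would invoke Theorem \ref{thm:algoworks} to conclude that $(P, E \setminus R)$ is an index pair for $S$ in $N$. This reduces the task to verifying the extra requirement in the definition of a strong index pair: namely, that for every $\tau \in E \setminus R$ there exists some $\sigma \in S$ and a path $\rho : \mathbb{Z}_{[a,b]} \to P$ with $\rho(a) = \sigma$ and $\rho(b) = \tau$.

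Next, I would fix an arbitrary $\tau \in E \setminus R$ and note that, since $R \subseteq E$, we have $\tau \in E$. Because $(P,E)$ is assumed to be a strong index pair for $S$, there exist $\sigma \in S$ and a path $\rho : \mathbb{Z}_{[a,b]} \to P$ with $\rho(a) = \sigma$ and $\rho(b) = \tau$. Since Algorithm \ref{alg:findR} only modifies $E$ (it constructs $R \subseteq E$ but never alters the variable $P$), the set $P$ in the output index pair coincides with the set $P$ in the input index pair. Hence the same path $\rho$ lies in $P$ and witnesses the strong property for $(P, E \setminus R)$.

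I do not anticipate any real obstacle: the definition of strong index pair constrains only what happens to simplices in $E$, and because we have shrunk $E$ to $E \setminus R$ while leaving $P$ unchanged, the set of simplices requiring a witness path has shrunk and the ambient set in which paths are required has stayed the same. Combining these two observations closes the proof in a single short paragraph.
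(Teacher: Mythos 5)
Your proposal is correct and matches the paper's argument: both invoke Theorem \ref{thm:algoworks} to dispose of the index-pair conditions, then observe that since Algorithm \ref{alg:findR} leaves $P$ untouched and only shrinks $E$, the witness paths in $P$ for elements of $E\setminus R\subseteq E$ are inherited from the strong index pair $(P,E)$. Your write-up is simply a slightly more explicit rendering of the same two-step observation.
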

\begin{proof}
Theorem \ref{thm:algoworks} gives that $\left(P, E\setminus R\right)$ is an index pair for $S$ in $N$, so it is sufficient to show that such an index pair is strong. Note that $P$ does not change, but the strongness of index pairs only requires paths to be in $P$. Since all paths in $\left(P,E\right)$ are also paths in $\left(P, E\setminus R\right)$, it follows that $\left(P, E\setminus R\right)$ is a strong index pair in $N$.
\end{proof}

These theorems give us a canonical scheme for choosing invariant sets from a sequence of multivector fields and then computing the barcode of 
persistence module given in Equation~(\ref{eq:subs}). We give our exact scheme in Algorithm \ref{alg:scheme}. 

\begin{algorithm2e}
\SetAlgoLined
\KwIn{ Sequence of multivector fields $\mathcal{V}_1, \mathcal{V}_2, \ldots, \mathcal{V}_n$, closed set $N_0 \subset K$, $\delta \in \mathbb{Z}$. } 
\KwOut{ Barcodes corresponding to persistence of Conley Index }
    
    $i \gets 1$
    
    \While{$i <= n$}{
    
        $S_i \gets \inv_{\mathcal{V}_i}(N_{i-1})$
    
        $\left(P'_{i,1}, E'_{i,1}\right) \gets \left( \pf_{N_{i-1}}\left(\cl\left(S_i\right)\right), \pf_{N_{i-1}}\left(\mo\left(S_i\right)\right) \right)$
        
        $R_{i,1} \gets {\tt findR}(S_i, P'_{i,1}, E'_{i,1}, \mathcal{V}, \delta)$
        
        $\left(P^{(1)}_i, E^{(1)}_i\right) \gets \left(P'_{i,1}, E'_{i,1} \setminus R_{i,1}\right)$
        
        $N_i \gets {\tt find}(S_i, N_{i-1}, \mathcal{V}, \delta)$
        
        $\left(P'_{i,2}, E'_{i,2}\right) \gets \left( \pf_{N_{i}}\left(\cl\left(S_i\right)\right), \pf_{N_{i}}\left(\mo\left(S_i\right)\right) \right)$
        
        $R_{i,2} \gets {\tt findR}(S_i, P'_{i,2}, E'_{i,2}, \mathcal{V}, \delta)$
        
        $\left(P^{(2)}_i, E^{(2)}_i\right) \gets \left(P'_{i,2}, E'_{i,2} \setminus R_{i,2}\right)$
        
        \uIf{$i = 1$}{
            $\left( P_i, E_i \right) \gets \left( P^{(2)}_i, E^{(2)}_i \right)$
        }\uElseIf{$i = n$}{
            $\left( P_i, E_i \right) \gets \left( P^{(1)}_i, E^{(1)}_i \right)$
        }\Else{
            $\left( P_i, E_i \right) \gets \left( \pf_{N_{i-1} \cup N_i}\left(P^{(1)}_i \cup P^{(2)}_i\right), \pf_{N_{i-1} \cup N_i}\left( E^{(1)}_i \cup E^{(2)}_i \right) \right)$
        }

        $i \gets i + 1$
    }
    
    \Return{ ${\tt zigzagPers}\left( \left(P_1, E_1\right) \supseteq \left(P_1^{(2)} \cap P_2^{(1)}, E_1^{(2)} \cap E_2^{(1)}\right) \subseteq \left(P_2, E_2\right) \supseteq \ldots \subseteq \left(P_n, E_n\right) \right)$}

 \caption{Scheme for computing the persistence of the Conley Index, variable $N$}
 \label{alg:scheme}
\end{algorithm2e}

The astute reader will notice an important detail about Algorithm \ref{alg:scheme}. Namely, the $\texttt{find}$ function is parameterized by a nonnegative integer $\delta$, and the function has not yet been defined. In particular, said function must output a closed $N_i \supseteq S_i$ such that $S_i$ is isolated by $N_{i-1} \cup N_i$. An obvious choice is to let $N_i := N_{i-1}$, but such an approach does not allow one to capture essential solutions that ``move'' outisde of $N_{i-1} = N_i$ as the multivector fields change. We give a nontrivial \texttt{find} function in the next subsection that can be used to capture such changes in an essential solution. 
\subsection{Finding Isolating Neighborhoods}

Given an invariant set $S$ isolated by $N$ with respect to $\mathcal{V}$, we now propose a method to find a closed, nontrivial $N' \subseteq K$ such that $N \cup N'$ isolates $S$. Our method relies heavily on the concept of $\delta$-collar introduced in Section \ref{sec:persist}. In fact, we will let $N' = C_\delta(S) \setminus R$ such that $N \cup N'$ isolates $S$. Hence, it is sufficient to devise an algorithm to find $C_\delta(S) \setminus R$. Before we give and prove the correctness of the algorithm, we briefly introduce the notion of the \textit{push backward} of some set $S$ in $N$, denoted $\pb_N(S)$. We let $\pb_N(S) = \{ x \in N \; | \; \exists \; \rho \; : \; \mathbb{Z}_{[a,b]} \to N, \; \rho(a) = x, \rho(b) \in S \}$. Essentially, the push backward of $S$ in $N$ is the set of simplices $\sigma \in N$ for which there exists a path in $N$ from $\sigma$ to $S$.   

\begin{algorithm2e}
\SetAlgoLined
\KwIn{ Invariant set $S$ isolated by $N$ under $\mathcal{V}$, $\delta \in \mathbb{Z}$ } 
\KwOut{ Closed set $N' \supseteq S$ such that $N \cup N'$ isolates $S$ under $\mathcal{V}$}
    $V \gets {\tt new \; stack()}$
    
    $R \gets {\tt new \; set()}$
    
    $pb \gets \pb_N(S)$
    
    \ForEach{$\sigma \in C_\delta(S) \cup N$}{
        ${\tt setUnvisited}(\sigma)$
    }
    
    \ForEach{ $\sigma \in S$ }
    {
        $adj \gets \cl(\sigma) \cup \left[v\right]_\mathcal{V}$
    
        \ForEach{$\tau \in adj$}
        {
            \If{ $\tau \not\in S  \; {\tt and} \; \tau \in C_\delta(S) \cup N $}
            {
                ${\tt push}(V, \tau)$
            }
        }
    }
    
    \While{ ${\tt size}(V) > 0$ }{
        $v \gets {\tt pop}(V)$
        
        \If{$!{\tt hasBeenVisited}(v)$}
        {
            ${\tt setVisited}(v)$
            
            \uIf{ $\left(\cl\left(v\right) \cup \left[ v \right]_\mathcal{V}\right) \cap pb \neq \emptyset$ }
            {
                ${\tt add}(R, v)$
                
                $cf \gets {\tt cofaces}(v)$

                ${\tt addAll}(R, cf)$
                
            }
            \Else{
                \ForEach{$\sigma \in \left(\cl\left(v\right) \cup \left[ \sigma \right]_\mathcal{V}\right) \cap \left(C_\delta(S) \cup N \right)$}{
                    ${\tt push}(V, \sigma)$
                }
            }
        }
    }
    
    \Return{ $C_\delta(S) \setminus R$ }

 \caption{${\tt find}(S, N, \mathcal{V}, \delta)$}
 \label{alg:findisolating}
\end{algorithm2e}

We now prove that $N \cup \left(C_\delta(S)\right) \setminus R$ isolates $S$. Note that since $S \subseteq C_\delta(S) \setminus R$, this also implies that $ C_\delta(S) \setminus R$ isolates $S$.
\begin{theorem}
Let $S$ denote an invariant set isolated by $N \subseteq K$ under $\mathcal{V}$. If $C_\delta(S) \setminus R$ is the output of Algorithm \ref{alg:findisolating} on inputs $S, N, \mathcal{V}, \delta$, then the closed set $N \cup \left(C_\delta(S) \setminus R\right)$ isolates $S$.  
\end{theorem}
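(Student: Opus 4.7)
The plan is a proof by contradiction. Assume $\tilde N:=N\cup(C_\delta(S)\setminus R)$ does not isolate $S$ under $\mathcal V$; then there is a path $\rho\colon\mathbb Z_{[a,b]}\to\tilde N$ with $\rho(a),\rho(b)\in S$ but some $\rho(j)\notin S$ for $a<j<b$. I will choose $\rho$ to minimize the number of maximal sub-excursions outside $N$, breaking ties by taking $b-a$ minimal; this forces $\rho(a+1),\ldots,\rho(b-1)$ to avoid $S$ by the usual splitting argument. Since $N$ already isolates $S$, $\rho$ cannot stay entirely in $N$, so there is at least one excursion; let $[i_2,i_1]$ be the last one, so $\rho(i_2-1),\rho(i_1+1)\in N$ and $\rho(i_2),\ldots,\rho(i_1)\in\tilde N\setminus N\subseteq C_\delta(S)\setminus R$.

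The pivot of the argument is that the tail $\rho(i_1+1),\ldots,\rho(b)$ is a path in $N$ ending in $S$, so $\rho(i_1+1)\in\pb_N(S)$; since $\rho(i_1+1)\in F_{\mathcal V}(\rho(i_1))$, we have $F_{\mathcal V}(\rho(i_1))\cap\pb_N(S)\ne\emptyset$. Hence, if the BFS of Algorithm~\ref{alg:findisolating} ever pops $\rho(i_1)$, it inserts $\rho(i_1)$ into $R$, contradicting $\rho(i_1)\in C_\delta(S)\setminus R$. The remainder of the proof is therefore devoted to showing that the BFS does visit $\rho(i_1)$, which I do by tracing the forward sub-path $\rho(a+1),\ldots,\rho(i_1)$. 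The first element $\rho(a+1)\in F_{\mathcal V}(\rho(a))\cap(C_\delta(S)\cup N)\setminus S$ is placed in the queue by the algorithm's initialization, and then by induction on $k$ it suffices to show that none of $\rho(a+1),\ldots,\rho(i_1-1)$ is blocked by the push-backward test.

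For $k\in[i_2,i_1-1]$, $\rho(k)\in C_\delta(S)\setminus R$ immediately forbids $\rho(k)\in R$, so blocking is impossible. For $k\in[a+1,i_2-1]$ with $\rho(k)\notin N$, again $\rho(k)\in C_\delta(S)\setminus R$ rules out blocking. The nontrivial case is $k\in[a+1,i_2-1]$ with $\rho(k)\in N$; here the key claim is $\rho(k)\notin\pb_N(S)$. Otherwise, concatenating a path in $N$ from $\rho(k)$ to some $\sigma\in S$ with the prefix $\rho(a),\ldots,\rho(k)$ of $\rho$ produces a new path in $\tilde N$ with endpoints in $S$ and strictly fewer outside-$N$ excursions (since $[i_2,i_1]$ and any sub-structure after index $k$ are dropped); if the new path has any excursion at all, this contradicts excursion-minimality, while if it has none it is a path in $N$ with endpoints in $S$ and intermediate $\rho(k)\notin S$, contradicting $N$ isolating $S$. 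With $\rho(k)\in N$ and $\rho(k)\notin\pb_N(S)$ we obtain $F_{\mathcal V}(\rho(k))\cap\pb_N(S)=\emptyset$, since any $y$ in that intersection would give $\rho(k)\to y\to\cdots\to S$ as a path in $N$, forcing $\rho(k)\in\pb_N(S)$. In every case the BFS advances from $\rho(k)$ by pushing $\rho(k+1)\in F_{\mathcal V}(\rho(k))\cap(C_\delta(S)\cup N)$, so the BFS eventually visits $\rho(i_1)$ and the promised contradiction follows.

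The main obstacle I anticipate is the excursion-minimality step, which requires carefully tracking how a shortcut through $\pb_N(S)$ prunes excursions and simultaneously keeps the new path inside $\tilde N$ with endpoints in $S$. A separate routine check that $\tilde N$ is closed in $K$ follows from the algorithm's insertion of every coface of a blocked simplex into $R$, together with the facts that $N$ is closed and that $C_\delta(S)$ is closed under faces by its recursive construction.
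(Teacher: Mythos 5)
Your proof is correct and follows the same overall strategy as the paper's: argue by contradiction from a bad path $\rho$ in $\tilde N$, identify the end of the last excursion as a simplex that Algorithm~\ref{alg:findisolating} must have blocked and inserted into $R$, and then trace the stack/DFS front along $\rho$ from a simplex adjacent to $S$ to show the blocked simplex is in fact visited, yielding the contradiction; the closedness of $\tilde N$ is handled the same way in both via the insertion of cofaces into $R$.

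Where you genuinely go beyond the paper is the excursion-minimality device. The paper takes $k$ to be the first index satisfying both $\rho(k)\in C_\delta(S)\setminus N$ and $F_\mathcal V(\rho(k))\cap\pb_N(S)\neq\emptyset$, and then argues that no earlier $\rho(j)$ can be blocked because that would contradict the minimality of $k$. But this minimality only controls indices $j$ with $\rho(j)\in C_\delta(S)\setminus N$; it says nothing about an intermediate $\rho(j)\in N\setminus S$ with $F_\mathcal V(\rho(j))\cap\pb_N(S)\neq\emptyset$, which the algorithm would also block, breaking the forward chain. You anticipate exactly this case and rule it out by first minimizing the number of maximal excursions of $\rho$ outside $N$: if such a $\rho(j)\in N\cap\pb_N(S)$ existed, splicing a path in $N$ from $\rho(j)$ back to $S$ would produce a bad path with strictly fewer excursions (or a bad path entirely in $N$, contradicting that $N$ isolates $S$). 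This extra minimality principle is precisely what is needed to make the forward-induction watertight, and it is the main place your argument improves on the paper's. Two small remarks: the algorithm uses a stack, so the traversal is DFS rather than BFS, though this has no bearing on the argument; and when you say $\rho(k)\in C_\delta(S)\setminus R$ ``forbids blocking,'' it is worth being explicit that this is only because the induction hypothesis guarantees $\rho(k)$ is actually popped and examined, so a satisfied blocking test would have placed it in $R$.
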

\begin{proof}
For a contradiction, assume that there exists a path $\rho \; : \; \mathbb{Z}_{[ a, b]} \to N \cup \left(C_\delta(S) \setminus R\right)$ so that $\rho(a), \rho(b) \in S$ where there is an $i$ satisfying $a < i < b$ with $\rho(i) \not\in S$. Note that since $N$ isolates $S$, if $N \cup C_\delta(S) \setminus R$ does not isolate $S$, then there must exist a first $k \in \mathbb{Z}_{[a,b]}$ such that $\rho(k) \in C_\delta(S)\setminus N$ and  $F_\mathcal{V}(\rho(k)) \cap \pb_N(S) \neq \emptyset$. If this were not the case, then $N$ would not isolate $S$.  Without loss of generality, we assume that for all $a < j < k$, we have that $\rho(j) \not\in S$. Note that for all $j \in \mathbb{Z}_{[a+1,k-1]}$, when $\rho(j)$ is removed from the stack $V$, if $\rho(j+1)$ has not been visited, then $\rho(j+1)$ is added to the stack. Hence, this implies that if any $\rho(j)$ is visited, then $\rho(k)$ will be added to $R$. If this were not the case, there would exist some $\rho(j)$ such that when $\rho(j)$ was removed from the stack, $\rho(j+1)$ was not visited and was not added to the stack. This implies that $F_\mathcal{V}(\rho(j)) \cap \pb_N(S) \neq \emptyset$, which contradicts $\rho(k)$ being the first such simplex in the path. 

Hence, since $\rho(a+1)$ is added to the stack, it follows that $\rho(k)$ is added to $R$, which implies that $\rho\left( \mathbb{Z}_{[a,b]} \right) \not\subset N \cup \left( C_\delta(S) \setminus R \right)$. Note too that $N \cup C_\delta(S) \setminus R$ must be closed, as if there is a $\sigma \in N$ such that $\rho(k) \leq \sigma$, then $\rho(k) \in N$ because $N$ is closed, a contradiction. But when $\rho(k)$ is removed from $C_\delta(S)$, any of its cofaces which are in $C_\delta(S)$ are also removed. Hence, $N$ is closed, $C_\delta(S) \setminus R$ is closed, so their union must be closed.  
\end{proof}

Hence, we use Algorithm \ref{alg:findisolating} as the \texttt{find} function in our scheme given in Algorithm \ref{alg:scheme}. We give an example of our implementation of Algorithm \ref{alg:scheme} using the ${\tt find}$ function in Figure \ref{fig:collar}. 

\begin{figure}[htbp]
\centering
\begin{tabular}{ccc}
  \includegraphics[height=39.4mm]{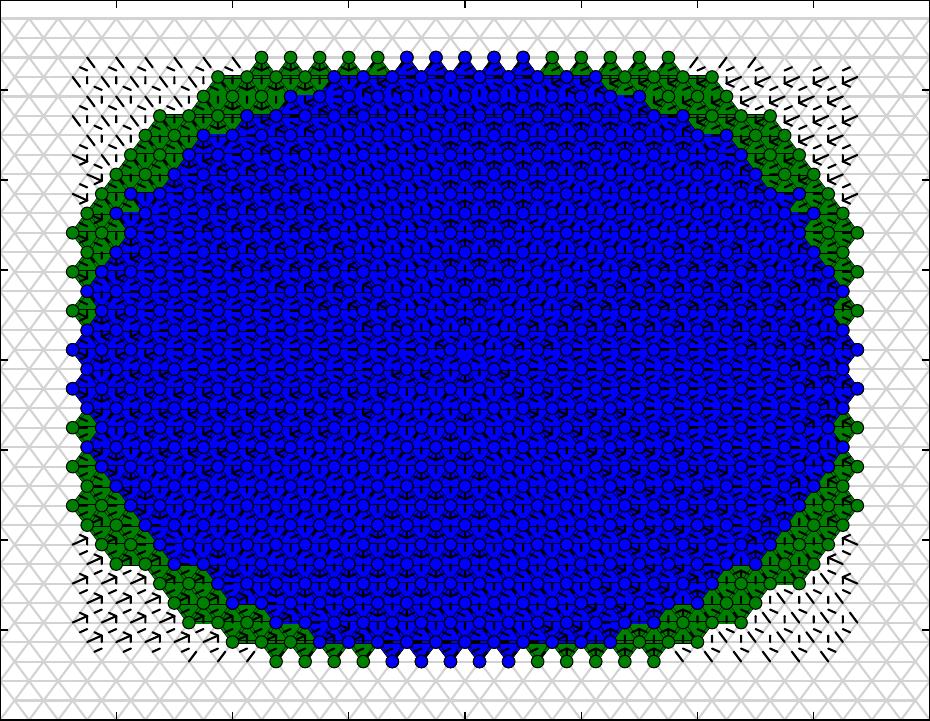}&
  \includegraphics[height=39.4mm]{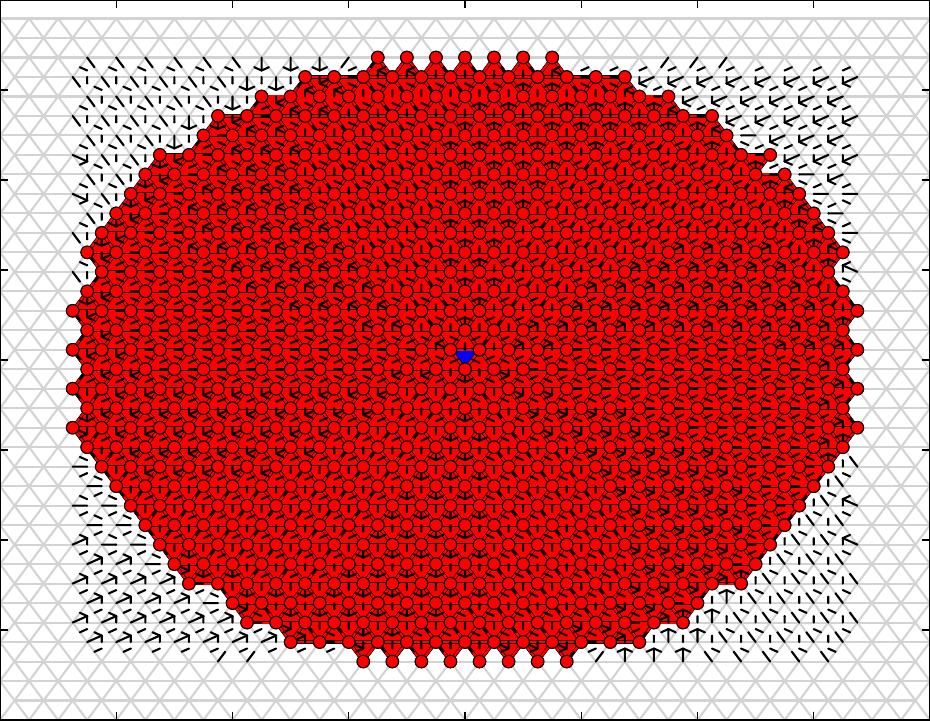}&
  \includegraphics[height=39.4mm]{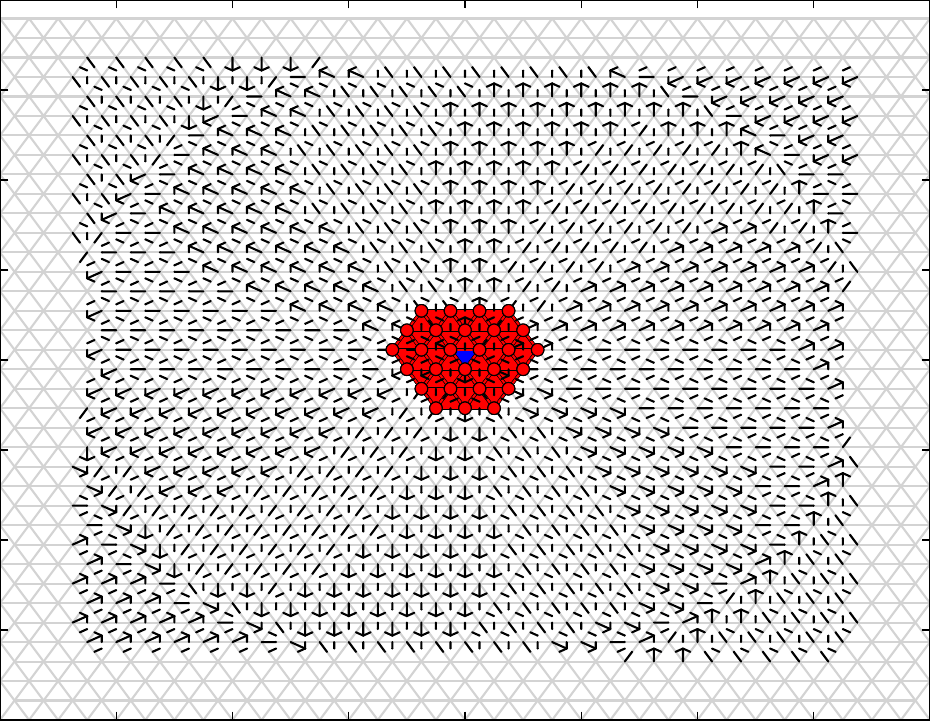}
\end{tabular}
\caption{Three different index pairs generated from our scheme in Algorithm \ref{alg:scheme}. The isolating neighborhood is in green, $E$ is in red, and $P \setminus E$ is in blue. Note how the isolating neighborhood changes by defining a collar around the invariant sets (which are exactly equal to $P \setminus E$). Between the left and middle multivector fields, the periodic attractor partially leaves $K$, so the maximal invariant set in $N$ is reduced to just a triangle. Hence, the size of $N$ drastically shrinks between the middle and right multivector fields.}
\label{fig:collar}
\end{figure} 

\cancel{Note, however, that Algorithm \ref{alg:scheme} computes the push forward and does not reduce the size of $E$ by using the theory developed in Section \ref{sec:persist}. Finding an isolating neighborhood that is compatible with the approach given in Algorithm \ref{alg:findR} is slightly problematic, because Algorithm \ref{alg:findR} requires that $S = \inv(N)$. Hence, we propose the following algorithm to ensure that $S = \inv(N)$. 

\begin{algorithm2e}[H]
\SetAlgoLined
\KwIn{ Invariant set $S$ isolated by $N$ under $\mathcal{V}$ } 
\KwOut{ Set $R'$ of simplices such that $N \setminus R'$ isolates $S$ and $S = \inv\left(N \setminus R'\right)$}
    
    $scc \gets {\tt stronglyConnectedComponents}\left(\mathcal{V},N\right)$
    
    $dif \gets scc \setminus S$
    
    $R' \gets {\tt new \; set}()$
    
    \ForEach{$\sigma \in dif$}{
    
        \uIf{ ${\tt isCritical}\left(\left[ \sigma \right]\right)$ }
        {
            ${\tt addAll}\left(R', \left[ \sigma \right]\right)$
            
            $cf \gets {\tt cofaces}(\left[\sigma\right])$
            
            ${\tt addAll}\left(R', cf\right)$
        
        }\ElseIf{$\exists \; \tau \in \mo\left( \left[\sigma\right] \right)\setminus R'$ where $\tau \in scc$}{
            $cf \gets {\tt cofaces}(\left[\sigma\right])$
            
            ${\tt addAll}(R', \sigma)$
            
            ${\tt addAll}(R',cf)$
        }
    
    }
    \Return{$R'$}
    
 \caption{Make $S = \inv\left(N \setminus R\right)$}
 \label{alg:makeisolating}
\end{algorithm2e}
Note that Algorithm \ref{alg:makeisolating} uses a function call ${\tt stronglyConnectedComponents}(\mathcal{V}, N)$. Intuitively, this function returns the strongly connected components of a graph induced by $\mathcal{V}, N$. Since the function is fairly simple, we only give a few quick details. The graph $G$ is defined by letting each $V \in \mathcal{V}$ with the property that $V \cap N \neq \emptyset$ correspond to a vertex in $G$. We will abuse notation and let $\left[v\right]$ denote both a multivector and the corresponding vertex in $V(G)$. If two vectors $\left[v\right],\left[u\right]$ have the property that $\mo\left( \left[v\right]\right) \cap \left[u\right] \neq \emptyset$, we include a directed edge leaving $\left[v\right]$ and entering $\left[u\right]$. This captures the fact that a path which passes through a simplex in $\left[v\right]$ can leave $\left[v\right]$ by entering $\left[u\right]$. In addition, ${\tt stronglyConnectedComponents}$ only considers a single multivector $\left[v\right]$ to be a strongly connected component if $\left[v\right]$ is critical. This captures the notion of an essential solution, where an essential solution $\rho \; : \; \mathbb{Z} \to N$ only has the property that $\left|\rho\left(\left[a,\infty\right)\right)\right| = 1$ or $\left|\rho\left(\left(-\infty,a\right]\right)\right| = 1$ for some $a \in \mathbb{Z}$ if $\left[ \rho(a) \right]$ is critical. Hence, a canonical way to implement the ${\tt stronglyConnectedComponents}$ function is to use an algorithm to compute the strongly connected components on $G$ (such as Kosaraju's algorithm \todo{cite}), and then to remove those strongly connected components of size one unless the corresponding multivector is critical. }
\section{Conclusion}
\label{set:conc}
In this paper, we focused on computing the persistence of Conley indices of isolated invariant sets. Our preliminary experiments show that the algorithm can effectively compute this persistence in the presence of noise. It will be interesting to derive
a stability theory for this persistence. Toward that direction, the stability result for the 
isolated invariant sets presented in Appendix~\ref{Stab-sec} seems promising. In designing the tracking algorithm, we have made certain choices about the isolated neighborhoods and the invariant sets. Are there better choices? Which ones work better in practice? A thorough investigation with data sets in practice is perhaps necessary to settle this issue.

\bibliographystyle{abbrv}
\bibliography{refs}

\appendix{

\section{Stability of invariant sets}
\label{Stab-sec}
We now establish a result on the stability of invariant sets under perturbations to the underlying multivector field. Note that since multivector fields are discrete, metrics on multivector fields over a given simplicial complex must likewise be discrete. Hence, rather than defining a metric on the space of multivector fields for some simplicial complex $K$, we consider a topology on the space of multivalued maps induced by multivector fields on $K$. To define the topology on this space, we first define a relation on the set of multivector fields. In particular, if $\mathcal{V}_1$ and $\mathcal{V}_2$ are multivector fields on simplicial complex $K$ with the property that for all $V_1 \in \mathcal{V}_1$ there exists a $V_2 \in \mathcal{V}_2$ where $V_1 \subset V_2$. We say that $V_1$ is \textit{inscribed} in $V_2$ and $\mathcal{V}_1$ is a \textit{refinement} of $\mathcal{V}_2$. We follow the notation in~\cite{DJKKLM19} and denote this relation as $\mathcal{V}_1 \sqsubset \mathcal{V}_2$. 

Unfortunately, there is no clear notion of stability between invariant sets under multivector field refinement. This is because if $\mathcal{V}_1 \sqsubset \mathcal{V}_2$, there can exist $\sigma \in K$ such that $\left[\sigma\right]_{\mathcal{V}_1}$ is critical while $\left[\sigma\right]_{\mathcal{V}_2}$ is not, or vice versa. Hence, we consider the notion of a \textit{strong refinement}. The multivector field $\mathcal{V}_1$ is a strong refinement of $\mathcal{V}_2$ if $\mathcal{V}_1$ is a refinement of $\mathcal{V}_2$ and for each regular $V_2 \in \mathcal{V}_2$, we have that $\esol_{\mathcal{V}_1}(V_2) = \emptyset$. We use the symbol $\mathcal{V}_1 \overline{\sqsubset} \mathcal{V}_2$ to denote the strong refinement relation. In addition, if $\mathcal{V}_1 \overline{\sqsubset} \mathcal{V}_2$, then we write that $F_{\mathcal{V}_1} \overline{\subset} F_{\mathcal{V}_2}$. This notation is motivated by the fact that the definition of $F_\mathcal{V}$ established in Section \ref{sec:prelim} implies that $F_{\mathcal{V}_1}(\sigma) \subset F_{\mathcal{V}_2}(\sigma)$ for all $\sigma \in K$. 

We use the relation $\overline{\subset}$ to define a partial order on the space of dynamics \[F_K=\{F_{\mathcal V}\,|\,{\mathcal V} \mbox{ is on } K\}.\] In particular, if $F_{\mathcal{V}_1} \overline{\subset} F_{\mathcal{V}_2}$, we write $F_{\mathcal{V}_2} \leq_F F_{\mathcal{V}_1}$. 
\begin{proposition}
The relation $\leq_F$ is a partial order.
\end{proposition}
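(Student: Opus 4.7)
The plan is to verify the three axioms of a partial order. Reflexivity $\mathcal{V}\,\overline{\sqsubset}\,\mathcal{V}$ is immediate: each $V\in\mathcal{V}$ inscribes in itself, and no essential solution can live in a single regular multivector $V$, because the constant sequence $[\rho(i)]_{\mathcal{V}}=V$ violates the essential-solution condition when $V$ is regular. For antisymmetry, suppose $\mathcal{V}_1\,\overline{\sqsubset}\,\mathcal{V}_2$ and $\mathcal{V}_2\,\overline{\sqsubset}\,\mathcal{V}_1$. Chasing inscriptions gives $V_1\subseteq V_2\subseteq V_1'$ with $V_1,V_1'\in\mathcal{V}_1$ and $V_2\in\mathcal{V}_2$; because $\mathcal{V}_1$ is a partition, $V_1\subseteq V_1'$ forces $V_1=V_1'=V_2$. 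Hence the partitions coincide and $F_{\mathcal{V}_1}=F_{\mathcal{V}_2}$.

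The substantive step is transitivity. Given $\mathcal{V}_3\,\overline{\sqsubset}\,\mathcal{V}_2\,\overline{\sqsubset}\,\mathcal{V}_1$, inscription is transitive by composition, so it suffices to show $\esol_{\mathcal{V}_3}(V_1)=\emptyset$ for every regular $V_1\in\mathcal{V}_1$. First I would observe that every $V_2\in\mathcal{V}_2$ inscribed in $V_1$ is itself regular: if some $V_2\subseteq V_1$ were critical, then the constant solution $\rho(i)=\sigma$ at any $\sigma\in V_2$ would be a $\mathcal{V}_2$-essential solution inside $V_1$ (the essential condition is vacuous on critical multivectors), contradicting $\esol_{\mathcal{V}_2}(V_1)=\emptyset$. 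Now suppose for contradiction that $\rho\in\esol_{\mathcal{V}_3}(V_1)$. Since $F_{\mathcal{V}_3}(\sigma)\subseteq F_{\mathcal{V}_2}(\sigma)$ for every $\sigma$, the map $\rho$ is also a $\mathcal{V}_2$-solution. If $\rho$ happens to be $\mathcal{V}_2$-essential, then $\rho\in\esol_{\mathcal{V}_2}(V_1)$, contradicting $\mathcal{V}_2\,\overline{\sqsubset}\,\mathcal{V}_1$. Otherwise, the $\mathcal{V}_2$-essential condition fails at some index, so after time reversal if necessary there is an $i$ with $\rho(j)\in W$ for all $j\ge i$, where $W:=[\rho(i)]_{\mathcal{V}_2}$ is regular.

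To finish, I would use finiteness of $W$ to extract a $\mathcal{V}_3$-essential solution contained in $W$, contradicting $\mathcal{V}_3\,\overline{\sqsubset}\,\mathcal{V}_2$. Partition the indices $j\ge i$ by the value $\rho(j)\in W$; pigeonhole gives at least one element of $W$ hit infinitely often. In case (a), two distinct $\mathcal{V}_3$-classes $V_3^A,V_3^B\subseteq W$ are each visited infinitely often; choosing $j_1<j_3<j_2$ with $\rho(j_1)=\rho(j_2)\in V_3^A$ and $\rho(j_3)\in V_3^B$ gives a periodic $\mathcal{V}_3$-solution $\tilde\rho$ of period $j_2-j_1$ whose values visit at least two $\mathcal{V}_3$-multivectors in every period, forcing $\tilde\rho$ to be $\mathcal{V}_3$-essential. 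In case (b), only one $\mathcal{V}_3$-class $V_3^*\subseteq W$ is visited infinitely often, so $\rho$ is eventually confined to $V_3^*$; the $\mathcal{V}_3$-essentialness of the original $\rho$ then forces $V_3^*$ to be critical, and the constant solution at any $\sigma\in V_3^*$ is a $\mathcal{V}_3$-essential solution lying in $W$. The hard part will be the bookkeeping in this final dichotomy: ensuring that case (a) really produces a period spanning two distinct $\mathcal{V}_3$-classes (so that periodicity alone delivers the essential condition in both time directions), and that the complementary case (b) genuinely forces criticality of the residual $\mathcal{V}_3$-class, so that in every branch one actually exhibits an element of $\esol_{\mathcal{V}_3}(W)$.
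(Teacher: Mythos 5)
Your proof follows the same overall route as the paper's: reflexivity and antisymmetry are straightforward, and the substantive work is transitivity, which both you and the paper reduce to showing that for each regular coarse multivector, the finest field admits no essential solution inside it. Your version is correct, and in fact it supplies two steps that the paper's own proof glosses over. First, the paper argues that if $\rho$ is eventually trapped in some intermediate multivector $V_2 \subset V_3$ then this ``contradicts $\mathcal{V}_1$ being a strong refinement,'' without ever checking whether $V_2$ is regular --- the strong refinement hypothesis says nothing about $\esol$ of a \emph{critical} $V_2$; your preliminary observation that any $\mathcal{V}_2$-vector inscribed in a regular outer vector must itself be regular (via the constant solution) closes exactly this hole. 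Second, the paper asserts that a one-sided tail $\rho(\mathbb{Z}_{[a,\infty)}) \subset V_2$ already contradicts the strong refinement property, but a one-sided trapping does not by itself exhibit an element of $\esol_{\mathcal{V}_1}(V_2)$, which by definition is a bi-infinite solution; your pigeonhole-plus-periodicity dichotomy (periodic solution spanning two $\mathcal{V}_3$-classes, or a residual $\mathcal{V}_3$-class that must be critical and hence hosts a constant essential solution) does the needed extraction. One small caution: your phrase ``after time reversal if necessary'' should not be read as literally reversing $\rho$ --- reversed solutions need not be solutions since $F_{\mathcal V}$ is not symmetric --- but the pigeonhole/periodicity argument you give works verbatim on either tail, so this is only a wording issue, not a gap. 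Net: same approach as the paper, executed with greater care.
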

\begin{proof}
Note that for every multivector field $\mathcal{V} \overline{\sqsubset} \mathcal{V}$, so it follows that for all $F_\mathcal{V} \in F_K$, we have that $F_\mathcal{V} \leq_F F_\mathcal{V}$. Hence, $\leq_F$ is reflexive. Similarly, if $\mathcal{V}_1 \overline{\sqsubset} \mathcal{V}_2$ and $\mathcal{V}_2 \overline{\sqsubset} \mathcal{V}_1$, then it follows that for $V_1 \in \mathcal{V}_1$ there exists a $V_2 \in \mathcal{V}_2$ such that $V_1 \subset V_2$. But likewise, there must exist a $V'_1 \in \mathcal{V}_1$ such that $V_2 \subset V'_1$. But a multivector is a partition, and $V_1 \subset V_2 \subset V'_1$. Hence, $V'_1 \cap V_1 \neq \emptyset$, so $V'_1 = V_1$. Thus, $V_1 \subset V_2$ and $V_2 \subset V_1$, so $\mathcal{V}_1 = \mathcal{V}_2$ which implies that $F_{\mathcal{V}_1} = F_{\mathcal{V}_2}$. Thus, $\leq_F$ is antisymmetric.

Finally, we show that $\leq_F$ is transitive. Note that if $F_{\mathcal{V}_1} \leq F_{\mathcal{V}_2}$ and $F_{\mathcal{V}_2} \leq F_{\mathcal{V}_3}$, we have that $F_{\mathcal{V}_3} \overline{\subset} F_{\mathcal{V}_2}$ and that $F_{\mathcal{V}_2} \overline{\subset} F_{\mathcal{V}_1}$. Hence, since $\mathcal{V}_1, \mathcal{V}_2, \mathcal{V}_3$ are all defined on the same $K$, we have that for $V_1 \in \mathcal{V}_1$, there exists a $V_2 \in \mathcal{V}_2$ such that $V_1 \subset V_2$. Similarly, there exists a $V_3 \in \mathcal{V}_3$ such that $V_2 \subset V_3$. Hence, $\mathcal{V}_1$ is a refinement of $\mathcal{V}_3$, but it is not obvious that this is a strong refinement. Note that $\mathcal{V}_2$ is a strong refinement of $\mathcal{V}_3$, so for every $V_3 \in \mathcal{V}_3$, we have that $\esol_{\mathcal{V}_2}(V_3) = \emptyset$. We aim to show that $\esol_{\mathcal{V}_1}(V_3) = \emptyset$. Aiming for a contradiction, assume that $\esol_{\mathcal{V}_1}(V_3) \neq \emptyset$. In particular, let $\rho \in \esol_{\mathcal{V}_1}(V_3)$ denote such an invariant set. Note that $\rho(\mathbb{Z}) \not\subset V_2$, where $V_2 \subset V_3$. If the image of $\rho$ were in some $V_2$, then this contradicts $\mathcal{V}_1$ being a strong refinement of $\mathcal{V}_2$. In addition, there cannot exist an $a$ such that $\rho\left(\mathbb{Z}_{[-\infty, a]} \right) \subset V_2 $, as this also contradicts $\mathcal{V}_1$ being a strong refinement. There likewise can't be an $a$ so that $\rho\left( \mathbb{Z}_{[a, \infty]} \right) \subset V_2 $. Hence, for all $i$, there exists $j,k$ satisfying $j < i < k$ and $\left[\rho(j)\right]_{\mathcal{V}_2} \neq \left[\rho(i)\right]_{\mathcal{V}_2} \neq \left[\rho(k)\right]_{\mathcal{V}_2}$. This implies that $\rho \in \esol_{\mathcal{V}_2}(V_3)$, a contradiction. 
\end{proof}

Given a poset $(X, \leq_X)$, a set $U \subset X$ is said to be \textit{upper} if for all $x \in X, u \in U$ where $u \leq x$, we have that $x \in U$. \textit{Lower} sets are defined as is expected. It is well known that upper sets induce a topology. 
\begin{definition}
The {\em Alexenadrov topology} on a poset $(X,\leq_X)$ is the topology on $X$ in which all upper sets with respect to $\leq_X$ are open. 
\end{definition}
In addition, it is well known that lower sets correspond to closed sets. Alexenandrov topologies have several convenient properties not found in arbitrary topological spaces. Notably, in the Alexandrov topology on $X$, every point in $X$ has a minimal neighborhood.

Throughout the remainder of this section, we consider the Alexandrov topology on $(F_K,\leq_F)$. In particular, we show that invariant sets are \textit{strongly upper semicontinuous} with respect to this topology. Since we are only dealing with finite spaces, we use a definition from \cite{Barmak2019}. 
\begin{definition}[Strongly Upper Semicontinuous]
Let $F \; : \; X \multimap Y$ denote a multivalued map from a $T_0$ topological space $X$ to some set $Y$. The map $F$ is {\em strongly upper semicontinuous} if for all $x_1, x_2 \in X$ where $x_1 \leq x_2$, $F(x_1) \subseteq F(x_2)$. 
\end{definition}
Let $\inv = \{ \inv(F_\mathcal{V}, N) \; | \; F_\mathcal{V} \in F_K \}$ denote the set of invariant sets induced by multivector fields over $K$ in some fixed $N$. As each $F_\mathcal{V}$ defines an invariant set, we consider the map $p \; : \; F_K \to \inv$ which maps each $F_\mathcal{V}$ to its invariant part in $N$. We conclude this subsection by showing that $p$ is strongly upper semicontinous. 
\begin{proposition}
Let $F_{\mathcal{V}_1} \leq_F F_{\mathcal{V}_2}$ denote multivalued maps induced by multivector fields $\mathcal{V}_2 \overline{\sqsubset} \mathcal{V}_1 $ on some simplicial complex $K$. For any closed $N \subset K$, $\inv(F_{\mathcal{V}_2},N) \subset \inv(F_{\mathcal{V}_1}, N)$. 
\end{proposition}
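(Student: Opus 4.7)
The plan is to take $\sigma \in \inv(F_{\mathcal{V}_2}, N)$, fix any bi-infinite essential solution $\rho \in \esol_{\mathcal{V}_2}(N)$ with $\rho(0) = \sigma$, and argue that the \emph{same} map $\rho$ is an essential solution of $\mathcal{V}_1$, which immediately gives $\sigma \in \inv(F_{\mathcal{V}_1}, N)$. Since strong refinement gives the pointwise inclusion $F_{\mathcal{V}_2}(\tau) \subseteq F_{\mathcal{V}_1}(\tau)$ for all $\tau \in K$, $\rho$ is already a solution of $\mathcal{V}_1$ taking values in $N$; the only thing to check is the essentiality condition at each index $i$ where the containing $\mathcal{V}_1$-multivector $V_1 := [\rho(i)]_{\mathcal{V}_1}$ is regular.

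I would argue by contradiction and symmetry: suppose there is no $i^+ > i$ with $[\rho(i^+)]_{\mathcal{V}_1} \neq V_1$ (the backward case is identical). Then $\rho([i,\infty)) \subseteq V_1$. The goal is to manufacture a bi-infinite essential solution of $\mathcal{V}_2$ lying inside $V_1$, which directly contradicts the defining condition $\esol_{\mathcal{V}_2}(V_1) = \emptyset$ of the strong refinement at the regular multivector $V_1$.

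Split into cases. If some $[\rho(k)]_{\mathcal{V}_2}$ with $k \geq i$ is critical, then the constant map at $\rho(k)$ is already an essential solution of $\mathcal{V}_2$ inside $V_1$, contradicting strong refinement. Otherwise every $[\rho(k)]_{\mathcal{V}_2}$ for $k \geq i$ is regular, so essentiality of $\rho$ in $\mathcal{V}_2$ forces infinitely many indices $t \geq i$ at which $[\rho(t)]_{\mathcal{V}_2} \neq [\rho(t+1)]_{\mathcal{V}_2}$. Since $V_1 \times V_1$ is finite, by pigeonhole on the pairs $(\rho(t), \rho(t+1))$ at such transition indices there exist $i \leq t < t'$ with $(\rho(t), \rho(t+1)) = (\rho(t'), \rho(t'+1))$. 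The finite segment $\rho(t), \rho(t+1), \ldots, \rho(t') = \rho(t)$ is a closed loop in $V_1$ containing at least one $\mathcal{V}_2$-transition (namely at $t$). Extending this loop periodically with period $t'-t$ to a map $\tilde\rho : \mathbb{Z} \to V_1$ produces a bi-infinite $\mathcal{V}_2$-solution whose every period contains a transition; hence $\tilde\rho \in \esol_{\mathcal{V}_2}(V_1)$, the desired contradiction.

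The main obstacle is precisely this pigeonhole step. A naive choice of a repeated simplex in the tail $\rho([i,\infty))$ produces a loop that could lie inside a single regular $\mathcal{V}_2$-multivector, and its periodic extension would then fail essentiality. The key subtlety is to anchor the pigeonhole on transition \emph{pairs} rather than single simplices, so that the loop is guaranteed to exit whichever regular $\mathcal{V}_2$-multivector each of its points belongs to. Once the existence of $\tilde\rho$ is established, the symmetric argument for $i^-$ together with $F_{\mathcal{V}_2} \overline{\subset} F_{\mathcal{V}_1}$ completes the verification that $\rho$ is essential in $\mathcal{V}_1$, and hence $\sigma \in \inv(F_{\mathcal{V}_1}, N)$.
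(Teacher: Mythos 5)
Your proposal follows the same outline as the paper's own proof: argue that the essential $\mathcal{V}_2$-solution $\rho$ remains essential under $\mathcal{V}_1$ by contradiction, deducing from non-essentiality that a tail of $\rho$ lands in a single regular $\mathcal{V}_1$-multivector $V_1$, and then deriving $\esol_{\mathcal{V}_2}(V_1)\neq\emptyset$, contradicting the strong-refinement hypothesis. Where you go further than the published argument is in the last step. The paper simply asserts that the tail gives a nonempty $\esol_{\mathcal{V}_2}(V_1)$, but a right-infinite tail is not itself a bi-infinite essential solution; that step needs a construction, which the paper omits (the published proof also contains a typo, writing $\esol_{\mathcal{V}_1}(V)\neq 0$ where the strong-refinement definition requires the contradiction to concern $\esol_{\mathcal{V}_2}(V)$). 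Your case split is exactly the right way to supply what is missing: a critical $\mathcal{V}_2$-multivector in the tail gives a constant essential solution at once, and in the all-regular case the essentiality of $\rho$ under $\mathcal{V}_2$ forces infinitely many transition indices in the tail, so a pigeonhole on transition data produces a finite loop that extends to a periodic essential solution inside $V_1$. One minor simplification: pigeonholing on the single simplex $\rho(t)$, restricted to transition indices $t$, already suffices, since the loop $\rho(t),\ldots,\rho(t')$ with $\rho(t')=\rho(t)$ then contains the $\mathcal{V}_2$-transition at $t$ and extends periodically via $\rho(t)\in F_{\mathcal{V}_2}(\rho(t'-1))$; anchoring on the pair $(\rho(t),\rho(t+1))$ is not strictly necessary, though it is certainly harmless. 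In sum your writeup is more rigorous than the paper's.
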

\begin{proof}
For contradiction, assume that there exists a solution $\rho \; : \; \mathbb{Z} \to N$ where $\rho \in \inv(F_{\mathcal{V}_2},N)$ but $\rho \not\in \inv(F_{\mathcal{V}_1}, N)$. This implies that $\rho$ is not essential with respect to $\mathcal{V}_1$, so there must exist some $a \in \mathbb{Z}, V \in \mathcal{V}_1$, $V$ regular such that $\rho\left( \left[a,\infty\right) \right) \subset V$ or $\rho\left( \left(-\infty, -a\right] \right) \subset V$. But this implies that $\esol_{\mathcal{V}_1}(V) \neq 0$, which contradicts $\mathcal{V}_2$ being a strong refinement of $\mathcal{V}_1$. 
\end{proof}

\begin{corollary}
The map $p$ is strongly upper semicontinuous. 
\end{corollary}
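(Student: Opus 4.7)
The plan is to treat this corollary as a purely definitional unpacking of the preceding proposition, so the main work is lining up the two directions of implication rather than proving anything new. First I would recall the definition of strong upper semicontinuity: $p$ is strongly upper semicontinuous when $F_{\mathcal{V}_1} \leq_F F_{\mathcal{V}_2}$ implies $p(F_{\mathcal{V}_1}) \subseteq p(F_{\mathcal{V}_2})$, and then observe that $p(F_{\mathcal{V}_i})$ is by construction $\inv(F_{\mathcal{V}_i}, N)$, so the conclusion we need is a containment of invariant parts.

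Next I would unravel the ordering $\leq_F$. By its definition, $F_{\mathcal{V}_1} \leq_F F_{\mathcal{V}_2}$ is equivalent to $F_{\mathcal{V}_2} \overline{\subset} F_{\mathcal{V}_1}$, which is the same as saying $\mathcal{V}_2 \overline{\sqsubset} \mathcal{V}_1$, i.e., $\mathcal{V}_2$ is a strong refinement of $\mathcal{V}_1$. This is exactly the hypothesis of the preceding proposition, so applying that proposition yields the desired inclusion of invariant parts in $N$ for the two multivalued maps; no additional combinatorial surgery on essential solutions is needed because the proposition has already handled it via the strong-refinement condition ruling out essential solutions concentrating inside a regular multivector.

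The final step is purely bookkeeping: quote the proposition, rewrite its conclusion in the notation $p(\cdot)$, and observe that this is exactly the order-relation witnessing strong upper semicontinuity of $p$. Because $p$ is single-valued (into the discrete set $\inv$), no topology on the codomain enters, and there is no additional regularity to verify. I expect no serious obstacle here; the only minor subtlety is keeping the two conventions consistent—the inverted correspondence between $\overline{\sqsubset}$ and $\leq_F$, together with the way the proposition phrases the refinement hypothesis—so I would state both correspondences explicitly before concluding, to make the chain of implications transparent to the reader.
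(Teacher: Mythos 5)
Your plan is the right one---the corollary is meant to fall out of the preceding proposition by unpacking definitions, and the paper gives it no independent proof. The difficulty is that you never actually line up the two containments, and when you do they point in opposite directions. The paper's definition of strong upper semicontinuity requires that $x_1 \leq_F x_2$ imply $p(x_1) \subseteq p(x_2)$, which is exactly the reading you give. But the proposition, under the hypothesis $F_{\mathcal{V}_1} \leq_F F_{\mathcal{V}_2}$ (equivalently $\mathcal{V}_2 \overline{\sqsubset} \mathcal{V}_1$), concludes $\inv(F_{\mathcal{V}_2},N) \subset \inv(F_{\mathcal{V}_1},N)$, i.e.\ $p(F_{\mathcal{V}_2}) \subseteq p(F_{\mathcal{V}_1})$---the reverse containment. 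So ``applying that proposition yields the desired inclusion'' is not correct as written; it yields the opposite one.

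The resolution is that the paper's stated definition of strong upper semicontinuity has the inclusion backwards relative to the convention the corollary is actually using: on an Alexandrov space where upper sets are open, the minimal open neighborhood of $x$ is the set of elements $\geq x$, and one should demand $p(y) \subseteq p(x)$ for all $y \geq x$. With that corrected reading, the proposition's conclusion is precisely what strong upper semicontinuity asks for. A sound proof of the corollary must notice and repair this reversal; your proposal flags exactly this spot as ``the only minor subtlety'' but never carries out the explicit comparison that would have exposed the mismatch.
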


}

\end{document}
